\newdimen\unit\newdimen\psep\newcount\nd\newcount\ndx\newbox\dotb\newbox\ptbox
\newdimen\dx\newdimen\dy\newdimen\dxx\newdimen\dyy\newdimen\hgt
\newdimen\xoff\newdimen\yoff
\newcommand\clap[1]{\hbox to 0pt{\hss{#1}\hss}}
\newcommand\vdisk[1]{{\font\dotf=cmr10 scaled #1\dotf.}}
\newcommand\varline[2]{\setbox\dotb\hbox{\vdisk{#1}}\xoff=-.5\wd\dotb
\wd\dotb=0pt\yoff=-.5\ht\dotb\psep=#2\ht\dotb}
\newcommand\varpt[1]{\setbox\ptbox\clap{\vdisk{#1}}\setbox\ptbox
\hbox{\raise-.5\ht\ptbox\box\ptbox}}
\newcommand\cpt{\copy\ptbox}
\newcommand\point[3]{\rlap{\kern#1\unit\raise#2\unit\hbox{#3}}}
\newcommand\setnd[4]{\dx=#3\unit\advance\dx-#1\unit\divide\dx by\psep
\dy=#4\unit\advance\dy-#2\unit\divide\dy by\psep \multiply\dx
by\dx\multiply\dy by\dy\advance\dx\dy\nd=1\advance\dx-1sp
\loop\ifnum\dx>0\advance\dx-\nd sp\advance\nd1\advance\dx-\nd
sp\repeat}
\newcommand\dline[5]{{\nd=#5\hgt=#2\unit\dx=#3\unit\advance\dx-#1\unit
\divide\dx by\nd\dy=#4\unit\advance\dy-#2\unit\divide\dy by\nd
\advance\hgt\yoff\rlap{\kern#1\unit\kern\xoff\loop\ifnum\nd>1\advance\nd-1
\advance\hgt\dy\kern\dx\raise\hgt\copy\dotb\repeat}}}
\newcommand\qellip[4]{{\setnd{0}{0}{#3}{#4}\dx=\unit\dy=0pt\raise\yoff\rlap{%
\kern#1\unit\kern\xoff\raise#2\unit\hbox{\loop\ifnum\dx>0\rlap{\kern#3\dx
\raise#4\dy\copy\dotb}\hgt=\dx\divide\hgt
by\nd\advance\dy\hgt\hgt=\dy \divide\hgt
by\nd\advance\dx-\hgt\repeat\rlap{\raise#4\dy\copy\dotb}}}}}
\newcommand\bez[6]{{\setnd{#1}{#2}{#3}{#4}\ndx=\nd\setnd{#3}{#4}{#5}{#6}
\ifnum\ndx>\nd\nd=\ndx\fi\dx=#3\unit\advance\dx-#1\unit\dy=#4\unit
\advance\dy-#2\unit\dxx=#5\unit\advance\dxx-#1\unit\dyy=#6\unit\advance
\dyy-#2\unit\advance\dxx-2\dx\advance\dyy-2\dy\divide\dxx
by\nd\divide\dyy
by\nd\advance\dx.25\dxx\advance\dy.25\dyy\divide\dx
by\nd\divide\dy by\nd \multiply\nd
by2\dx=100\dx\dy=100\dy\dxx=100\dxx\dyy=100\dyy\divide\dxx by\nd
\divide\dyy
by\nd\hgt=#2\unit\raise\yoff\rlap{\kern#1\unit\kern\xoff
\raise\hgt\copy\dotb\loop\ifnum\nd>0\advance\nd-1\advance\hgt0.01\dy
\kern0.01\dx\raise\hgt\copy\dotb\advance\dx\dxx\advance\dy\dyy\repeat}}}
\newcommand\ptu[3]{\point{#1}{#2}{\cpt\raise1ex\clap{$\scriptstyle{#3}$}}}
\newcommand\ptd[3]{\point{#1}{#2}{\cpt\raise-1.8ex\clap{$\scriptstyle{#3}$}}}
\newcommand\ptr[3]{\point{#1}{#2}{\cpt\raise-.4ex\rlap{$\ \scriptstyle{#3}$}}}
\newcommand\ptl[3]{\point{#1}{#2}{\cpt\raise-.4ex\llap{$\scriptstyle{#3}\ $}}}
\newcommand\ptlu[3]{\point{#1}{#2}{\raise.8ex\clap{$\scriptstyle{#3}$}}}
\newcommand\ptld[3]{\point{#1}{#2}{\raise-1.6ex\clap{$\scriptstyle{#3}$}}}
\newcommand\ptlr[3]{\point{#1}{#2}{\raise-.4ex\rlap{$\,\scriptstyle{#3}$}}}
\newcommand\ptll[3]{\point{#1}{#2}{\raise-.4ex\llap{$\scriptstyle{#3}\,$}}}
\newcommand\thnline{\varline{400}{.6}}
\newtheorem{thm}{Theorem}
\newtheorem*{SRL}{Szemer\'edi's Regularity Lemma}
\newtheorem*{RSRL}{Szemer\'edi's Lemma: refinement version}
\newtheorem*{ESStab}{Erd\H{o}s-Simonovits Stability Theorem}
\newtheorem*{Sauer}{Sauer's Lemma}
\newtheorem*{embed}{Embedding Lemma}
\newtheorem*{slice}{The Slicing Lemma}
\newtheorem*{BT}{The Alekseev-Bollob\'as-Thomason Theorem}
\newtheorem*{BBS}{The Balogh-Bollob\'as-Simonovits Theorem}
\newtheorem{conj}{Conjecture}
\newtheorem{qu}{Question}
\newtheorem{prob}{Problem}
\newtheorem{lemma}[thm]{Lemma}
\newtheorem{cor}[thm]{Corollary}
\newtheorem{obs}[thm]{Observation}
\theoremstyle{definition}\newtheorem{rmk}{Remark}
\theoremstyle{definition}\newtheorem*{defn}{Definition}
\theoremstyle{definition}\newtheorem*{alg}{Algorithm}
\theoremstyle{definition}
\newcommand{\ds}{\displaystyle}
\newcommand{\ul}{\underline}
\newcommand{\ol}{\overline}
\def\A{\mathcal{A}}
\def\B{\mathcal{B}}
\def\C{\mathcal{C}}
\def\D{\mathcal{D}}
\def\G{\mathcal{G}}
\def\HH{\mathcal{H}}
\def\I{\mathcal{I}}
\def\J{\mathcal{J}}
\def\K{\mathcal{K}}
\def\M{\mathcal{M}}
\def\P{\mathcal{P}}
\def\U{\mathcal{U}}
\def\Ex{\mathbb{E}}
\def\N{\mathbb{N}}
\def\Pr{\mathbb{P}}
\def\le{\leqslant}
\def\ge{\geqslant}
\def\eps{\varepsilon}
\def\->{\rightarrow}
\def\<{\langle}
\def\>{\rangle}
\def\ex{\textup{ex}}
\def\ent{\textup{ent}}
\begin{document}
\title{The structure of almost all graphs in a hereditary property}

\author{Noga Alon}
\address{Schools of Mathematics and Computer Science, Raymond and Beverly Sackler Faculty of Exact Sciences, Tel Aviv
University, Tel Aviv 69978, Israel and IAS, Princeton, NJ 08540, USA} \email{nogaa@tau.ac.il}

\author{J\'ozsef Balogh}
\address{Department of Mathematics\\ University of Illinois\\ 1409 W. Green Street\\ Urbana, IL 61801, USA} \email{jobal@math.uiuc.edu}

\author{B\'ela Bollob\'as}
\address{Trinity College\\ Cambridge CB2 1TQ\\ England\\ and \\ Department of Mathematical Sciences\\ The University of Memphis\\ Memphis, TN 38152, USA} \email{B.Bollobas@dpmms.cam.ac.uk}

\author{Robert Morris}
\address{Murray Edwards College, The University of Cambridge, Cambridge CB3 0DF, England} \email{rdm30@cam.ac.uk}
\thanks{The first author was supported in part by a USA Israeli BSF grant, by a grant from the Israel Science Foundation, by an ERC advanced grant, by NSF grant CCF 0832797 and by the Ambrose Monell Foundation. The second author was supported by NSF CAREER Grant DMS-0745185 and DMS-0600303, UIUC Campus Research Board Grants 09072 and 08086, and OTKA Grant K76099. The third author was supported by NSF grants DMS-0505550, CNS-0721983 and CCF-0728928, and ARO grant W911NF-06-1-0076. The fourth author was partly supported by a JSPS fellowship.}

\begin{abstract}
A hereditary property of graphs is a collection of graphs which is closed under taking induced subgraphs. The speed of $\P$ is the function $n \mapsto |\P_n|$, where $\P_n$ denotes the graphs of order $n$ in $\P$. It was shown by Alekseev, and by Bollob\'as and Thomason, that if $\P$ is a hereditary property of graphs then
$$|\P_n| \; = \; 2^{(1 - 1/r + o(1))n^2/2},$$
where $r = r(\P) \in \N$ is the so-called `colouring number' of $\P$. However, their results tell us very little about the structure of a typical graph $G \in \P$.

In this paper we describe the structure of almost every graph in a hereditary property of graphs, $\P$. As a consequence, we derive essentially optimal bounds on the speed of $\P$, improving the Alekseev-Bollob\'as-Thomason Theorem, and also generalizing results of Balogh, Bollob\'as and Simonovits.
\end{abstract}

\maketitle

\section{Introduction}\label{intro}

In this paper we shall describe the structure of almost every graph in an arbitrary hereditary property of graphs, $\P$. As a corollary, we shall obtain bounds on the speed of $\P$ which improve those proved by Alekseev~\cite{Alek} and Bollob\'as and Thomason~\cite{BT1,BT2}, and generalize a theorem of Balogh, Bollob\'as and Simonovits~\cite{BBS1,BBS2} on monotone properties of graphs. We begin with some definitions.

A collection of labelled graphs, $\P$, is called a \emph{hereditary property} if it is closed under re-labelling vertices, and under taking induced subgraphs. It is called \emph{monotone} if it is moreover closed under taking arbitrary subgraphs. Note that a hereditary property may be characterized by a (possibly infinite) collection of forbidden induced subgraphs.

Given a property of graphs, $\P$, let $\P_n = \{G \in \P : |V(G)| = n\}$ denote the graphs in $\P$ with vertex set $[n]$. The \emph{speed} of $\P$, introduced in 1976 by Erd\H{o}s, Kleitman and Rothschild~\cite{EKR}, is the function
$$n \; \mapsto \; |\P_n|.$$
The speed is a natural measure of the `size' of a property.

The possible structures and speeds of a hereditary or monotone property of graphs have been extensively studied, originally in the special case where a single subgraph is forbidden, and more recently in general. For example, Erd\H{o}s, Kleitman and Rothschild~\cite{EKR} and Kolaitis, Pr\"omel and Rothschild~\cite{KPR} studied $K_r$-free graphs, Erd\H{o}s, Frankl and R\"odl~\cite{EFR} studied monotone properties when a single graph is forbidden, and Pr\"omel and Steger~\cite{PS1,PS6} obtained (amongst other things) very precise results on the structure of almost all (induced-)$C_4$-free and $C_5$-free graphs. They also were the first to define the following parameter of a property of graphs, known as the `colouring number' of $\P$, which will be important in what follows.

First, for each $r \in \N$ and each vector $v \in \{0,1\}^r$, define a collection $\HH(r,v)$ of graphs as follows. Let $G \in \HH(r,v)$ if $V(G)$ may be partitioned into $r$ sets $A_1, \ldots, A_r$ such that $G[A_j]$ is the empty graph if $v_j = 0$, and is the complete graph if $v_j = 1$.

\begin{defn}
The colouring number $\chi_c(\P)$ of a property of graphs, $\P$, is defined to be
$$\chi_c(\P) \; := \; \max\big\{ r \in \N \,:\, \HH(r,v) \subset \P \textup{ for some } v \in \{0,1\}^r \big\}.$$
\end{defn}

The following result, proved by Alekseev~\cite{Alek} and Bollob\'as and Thomason~\cite{BT1,BT2}, generalizes the Erd\H{o}s-Frankl-R\"odl Theorem to a general hereditary property of graphs.

\begin{BT}
Let $\P$ be a hereditary property of graphs, and suppose $\chi_c(\P) = r$. Then
$$|\P_n| \; = \; 2^{(1 - 1/r + o(1))n^2/2}.$$
\end{BT}

The Alekseev-Bollob\'as-Thomason Theorem shows that the set of possible values for the `entropy' of a hereditary property of graphs,
$$\textup{ent}(\P) \; := \; \lim_{n \to \infty} \frac{1} {{n \choose 2}} \log_2(|\P_n|) $$
is not continuous, but in fact undergoes a series of discrete `jumps', from $1 - \frac{1}{r}$ to $1 - \frac{1}{r+1}$, where $r \in \N$. However, the proofs of Alekseev and of Bollob\'as and Thomason tell us very little about the structure of a \emph{typical} graph $G \in \P$. Their theorem also gives rather weak bounds on the rate of convergence of the entropy as $n \to \infty$.

For monotone properties of graphs, these problems were addressed by Balogh, Bollob\'as and Simonovits~\cite{BBS1,BBS2,BBS3} in a series of papers in which they proved \emph{very} precise structural results, and obtaining correspondingly precise bounds on the rate of convergence of $\ent(\P)$.

The following theorem was the main result of~\cite{BBS1}. Let $\ex(n,\M)$ denote the usual extremal number of a collection of graphs $\M$.

\begin{BBS}
Let $\P$ be a monotone property of graphs, with colouring number $\chi_c(\P) = r$. Then there exists a family of graphs $\M$ (containing a bipartite graph), and a constant $c = c(\P)$, such that the following holds.

For almost all graphs $G \in \P$, there exists a partition $(A,S_1,\ldots,S_r)$ of $V(G)$, such that
\begin{itemize}
\item[$(a)$] $|A| \le c(\P)$, \\[-1.5ex]
\item[$(b)$] $G[S_j]$ is $\M$-free for every $j \in [r]$,
\end{itemize}
and moreover
$$2^{(1-1/r){n \choose 2}} n^{\ex(n/r,\M)} \; \le \; |\P_n| \; \le \; 2^{(1 - 1/r){n \choose 2}} n^{\ex(n,\M) + cn}.$$
\end{BBS}

For even more precise results see~\cite{BBS2} and ~\cite{BBS3}. Balogh, Bollob\'as and Simonovits also had the following (unpublished) conjecture regarding the speed of hereditary graph properties. Let $\P^i(n,\M)$ denote the collection of induced-$\M$-free graphs on $[n]$.

\begin{conj}[Balogh, Bollob\'as, Simonovits]\label{herconj}
Let $\P$ be a hereditary property of graphs, and suppose that $\chi_c(\P) = r$. Then there exists a family of graphs $\M$ (with $\chi_c(\M) = 1$), and a constant $c = c(\P)$, such that
$$2^{(1-1/r){n \choose 2}} 2^{|\P^i(n/r,\M)|} \; \le \; |\P_n| \; \le \; 2^{(1-1/r){n \choose 2}} n^{r|\P^i(n/r,\M)| + cn}.$$
\end{conj}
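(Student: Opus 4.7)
The plan is to promote the Balogh-Bollob\'as-Simonovits structure theorem for monotone properties to the hereditary setting, by identifying in each typical $G \in \P_n$ an \emph{outer} $r$-partite skeleton together with \emph{inner} induced-$\M$-free graphs in each part. Concretely, I would first apply the refinement version of Szemer\'edi's Regularity Lemma to an arbitrary $G \in \P_n$, producing a partition of $V(G)$ into $k = O(1)$ clusters of nearly equal size, and form the reduced graph $R$ on $[k]$, recording for each regular pair whether its density lies close to $0$ or close to $1$ (the middle case will be shown to be rare). The key observation is that the $\{0,1\}$-labelled structure on $R$ cannot, via the Embedding Lemma, witness any $\HH(r+1,w)$-configuration with $\HH(r+1,w) \subset \P$: otherwise we could embed arbitrarily large members of such $\HH(r+1,w)$ into $G$, contradicting $\chi_c(\P) = r$. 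Consequently the clusters can be partitioned into $r$ groups whose between-group densities match some vector $v \in \{0,1\}^r$ with $\HH(r,v) \subset \P$; absorbing irregular pairs and atypical vertices into a sublinear exceptional set $A$ yields a canonical partition $(A, U_1, \ldots, U_r)$ associated to each typical $G$.

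Next I would identify the family $\M$. For each $j$, let $\Q_j$ be the hereditary property consisting of all graphs that appear as $G[U_j]$ (replaced by its complement if $v_j = 1$) over all $G \in \P$, all valid vectors $v$, and all valid partitions arising in the structural step. The maximality of $r$ forces $\chi_c(\Q_j) = 1$: otherwise, splitting the $j$-th coordinate of some $v$ would witness $\chi_c(\P) \ge r+1$. Let $\M$ be the union, over all $v$ and $j$, of the minimal forbidden induced subgraphs of the $\Q_j$; then $\chi_c(\M) = 1$ as required, and for almost every $G \in \P_n$ each $G[U_j]$ (or its complement) is induced-$\M$-free.

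The bounds then follow from the structural description. For the lower bound, I fix a vector $v$ maximising the within-part freedom and an equitable partition of $[n]$, place an arbitrary bipartite graph between each pair, and place an arbitrary induced-$\M$-free graph (complemented if $v_j = 1$) in each part; each such graph lies in $\P$ by the definition of $\HH(r,v)$ and of $\M$, and distinct choices produce distinct graphs. For the upper bound, I sum over the choice of $v$ (a constant), over the partition $(A, U_1, \ldots, U_r)$ (contributing $n^{O(|A|)} \cdot r^n$), over the between-part edges (at most $2^{(1-1/r){n \choose 2}}$), and over the within-part graphs (at most $|\P^i(n/r, \M)|^r$ up to the complement operation), absorbing all lower-order contributions into the $n^{cn}$ slack.

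The main obstacle is the structural step: showing that the regularity partition yields the described clean structure for \emph{almost all} $G \in \P_n$, not merely for a density-$1$ subfamily in some weak sense. This requires a stability-type argument in the spirit of Erd\H{o}s-Simonovits to promote the approximate partition furnished by the Regularity Lemma into the clean partition needed for the count, together with a careful bound on the number of $G \in \P_n$ for which the regularity-produced structure is atypical; these atypical graphs must contribute only a $2^{o(n^2)}$ factor, so as not to affect the leading asymptotics in either direction.
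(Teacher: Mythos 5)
You are attempting to prove a statement that the paper records as an \emph{open conjecture} (attributed to Balogh, Bollob\'as and Simonovits, unpublished); the paper explicitly states in Section~\ref{qsec} that its main result, Theorem~\ref{structure}, is ``considerably weaker than Conjecture~\ref{herconj}, since the set $A$ can be very large.'' There is therefore no proof in the paper to compare against, and your argument would need to overcome exactly the obstacles the authors could not. It does not. The central gap is quantitative: the conjecture allows only a multiplicative slack of $n^{cn} = 2^{O(n\log n)}$ beyond the main term, whereas every regularity-based step in your outline produces error factors of the form $2^{n^{2-\eps}}$ or worse. Concretely, your exceptional set $A$ is only ``sublinear''; even $|A| = n^{1-\eps}$ (which is all the paper achieves) gives $2^{|A|n} = 2^{n^{2-\eps}}$ choices for the edges meeting $A$, which swamps $n^{cn}$. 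Likewise, your final paragraph asks only that the atypical graphs contribute a $2^{o(n^2)}$ factor, but the conjecture requires control to within $2^{O(n\log n)}$. Closing this gap --- replacing $n^{1-\eps}$ by a constant-size exceptional set and polynomial-in-the-exponent error terms by $O(n\log n)$ ones --- is precisely the open content of the problem.

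There are two further structural errors. First, your family $\M$ must do double duty: the upper bound needs ``almost every $G\in\P$ has induced-$\M$-free parts,'' while the lower bound needs the converse, ``every graph obtained by placing induced-$\M$-free graphs in the parts and arbitrary bipartite graphs between them lies in $\P$.'' Your definition of $\M$ (minimal forbidden induced subgraphs of the classes $\Q_j$ of observed part-graphs) aims only at the former. The latter fails: $\HH(r,v)\subset\P$ guarantees membership only for graphs whose parts are complete or empty, and replacing a part by a general $\M$-free graph can create induced subgraphs forbidden in $\P$, so your lower-bound construction does not stay inside $\P$. Second, the class $\Q_j$ you define is essentially all graphs: Szemer\'edi regularity controls only densities, so any fixed graph $H$ can be hidden inside a part $U_j$ of a typical $G\in\P$ without disturbing the regularity structure, which forces $\M$ to be empty (or at least to have no bounded members), collapsing the bounds. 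This is exactly why the paper settles for the much weaker conclusion that the parts are $U(k)$-free rather than induced-$\M$-free for a family with $\chi_c(\M)=1$.
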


We remark that, by the results of Pr\"omel and Steger~\cite{PS1,PS6}, Conjecture~\ref{herconj} holds for the properties $\P = \{G : C_4 \not\le G\}$ and $\P = \{G : C_5 \not\le G\}$. (Here, and throughout, we write $H \le G$ to mean that $H$ is an induced subgraph of $G$.) To be precise, they proved that the vertex set of almost all $C_4$-free graphs can be partitioned into an independent set and a complete graph, and that for almost every $C_5$-free graph $G$, either $G$ or $\ol{G}$ has the following structure: the vertex set may be partitioned into two classes, $V(G) = A \cup B$, so that $A$ induces a clique, and $B$ induces a vertex disjoint union of cliques.

Finally, we note that even more precise structural results have been obtained for hereditary properties of graphs with much lower speeds, by Balogh, Bollob\'as and Weinreich~\cite{BBW1,BBW2,BBW3}, and for hereditary properties of other combinatorial structures, see for example~\cite{AFN,BBM1,BBM2,BBSS,Klaz,MT}. Note in particular~\cite{AFN}, where Sauer's Lemma (which will be a crucial tool in our proof) is used. There has also been some important recent progress on hereditary properties of hypergraphs, by Dotson and Nagle~\cite{DN} and (independently) Ishigami~\cite{Ish}, who (building on work of Nagle and R\"odl~\cite{NR} and Kohayakawa, Nagle and R\"odl~\cite{KNR}) proved a version of the Alekseev-Bollob\'as-Thomason Theorem for $k$-uniform hypergraphs, and by Person and Schacht~\cite{PSch}, who showed that almost every Fano-plane-free 3-uniform hypergraph is bipartite. %See Austin and Tao~\cite{AT} for some other related extensions.

\section{Main Results}\label{resultsec}

In this section we state our main results. We begin with a definition.

\begin{defn}
For each $k \in \N$, the \emph{universal graph} $U(k)$ is the bipartite graph with parts $A \cong [2]^k$ and $B \cong [k]$, and edge set
$$E\big( U(k) \big) \; = \; \big\{ ab \, : \, a \in A, b \in B \textup{ and } b \in a \big\}.$$
A graph $G$ is said to be \emph{$U(k)$-free} if there do not exist disjoint subsets $A,B \subset V(G)$ such that $G[A,B] = U(k)$. ($G[A,B]$ denotes the bipartite graph induced by the pair $(A,B)$.)
\end{defn}

The following theorem is the main result of this paper.

\begin{thm}\label{structure}
Let $\P$ be a hereditary property of graphs, with colouring number $\chi_c(\P) = r$. Then there exist constants $k = k(\P) \in \N$ and $\eps = \eps(\P) > 0$ such that the following holds.

For almost all graphs $G \in \P$, there exists a partition $(A,S_1,\ldots,S_r)$ of $V(G)$, such that
\begin{itemize}
\item[$(a)$] $|A| \le n^{1-\eps}$, \\[-1.5ex]
\item[$(b)$] $G[S_j]$ is $U(k)$-free for every $j \in [r]$.
\end{itemize}
Moreover
$$2^{(1 - 1/r)n^2/2} \; \le \; |\P_n| \; \le \; 2^{(1 - 1/r)n^2/2 \,+\, n^{2-\eps}}$$
for every sufficiently large $n \in \N$.
\end{thm}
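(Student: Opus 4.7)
The plan is to combine a Szemerédi-regularity style analysis (of the type used by Alekseev and by Bollob\'as--Thomason) with a stability-style uniqueness argument for the partition, and then invoke Sauer's Lemma to translate $U(k)$-freeness into the counting bound. Concretely, I would first fix a typical $G\in\P_n$ and apply the refinement version of Szemer\'edi's Lemma to obtain a regular partition with reduced graph $R$. Since the Alekseev-Bollob\'as-Thomason Theorem tells us $|\P_n|=2^{(1-1/r+o(1))n^2/2}$, and since (by a standard application of the Embedding/Slicing Lemmas) every $\HH(r+1,v)$ that appears in $R$ must actually embed into $\P$, the assumption $\chi_c(\P)=r$ forces $R$ to look essentially like a coloured Tur\'an graph $T_r$: its vertex set splits into $r$ colour classes $C_1,\dots,C_r$ with densities close to $1/2$ between distinct classes, while inside $C_j$ the density is close to $v_j\in\{0,1\}$.

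Next I would run a stability argument (in the spirit of Erd\H{o}s--Simonovits) to show that this partition of $R$ is essentially unique, with $|C_j|\approx M/r$. Lifting it back to $V(G)$ defines a candidate partition $(S_1,\dots,S_r)$, and I would place into the exceptional set $A$ the regularity-garbage $V_0$ together with every vertex $v\in V(G)$ whose neighbourhood profile across the $S_j$ deviates noticeably from the bulk pattern dictated by $v$. A weighted counting/entropy argument shows that at most a $o(1)$ fraction of graphs in $\P_n$ can have more than $n^{1-\eps}$ such misfit vertices: if this were not so, one could exhibit strictly more than $2^{(1-1/r)n^2/2 + n^{2-\eps}}$ graphs in $\P_n$, which the upper bound we are about to prove would then contradict (so this step is actually tied to the counting step and must be done simultaneously, e.g.\ via a supersaturation-and-container argument).

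The heart of the proof is showing $G[S_j]$ is $U(k)$-free for some $k=k(\P)$. Suppose instead that $G[S_j]$ contains $U(k)$: a $k$-set $B\subset S_j$ and $2^k$ vertices of $S_j$ exhibiting every possible trace on $B$. Pairing these $2^k$ trace-witnesses with the essentially $(1/2)$-random edges to the other parts $S_i$ allows one to promote the coloured-$r$-partite structure to a coloured-$(r+1)$-partite structure inside $\P$, i.e.\ to exhibit $\HH(r+1,v')\subset\P$ for some $v'$, contradicting $\chi_c(\P)=r$; the Embedding Lemma lets one carry out this promotion from an approximate (regularity-level) copy. I expect this to be the main obstacle: controlling the dependence between the $U(k)$ inside $S_j$ and the ``random'' edges to the other $S_i$ while still producing a genuine copy of $\HH(r+1,v')$ is delicate, and is where the specific $2^k\times k$ universal bipartite graph is tailored to match the $2^r$ possible vectors $v$.

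Finally I would count. The exceptional set $A$ contributes at most $\binom{n}{\le n^{1-\eps}}\cdot 2^{O(n\cdot n^{1-\eps})}=2^{O(n^{2-\eps})}$ choices (including its incident edges), the partition $(S_1,\dots,S_r)$ of the remaining vertices contributes $r^n=2^{O(n)}$, the edges between distinct $S_i,S_j$ contribute at most $\prod_{i<j}2^{|S_i||S_j|}\le 2^{(1-1/r)n^2/2}$, and, crucially, by Sauer's Lemma a $U(k)$-free graph on $m$ vertices has at most $O(m^{k-1})$ distinct neighbourhoods, so at most $m^{(k-1)m}=2^{O(m\log m)}$ such graphs exist; hence the internal structure of the $S_j$'s contributes only $2^{O(n\log n)}$. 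Summing gives the claimed upper bound $2^{(1-1/r)n^2/2+n^{2-\eps}}$, and the matching lower bound is immediate from $\HH(r,v)\subset\P$ for some $v$, since that subfamily alone already contains $2^{(1-1/r)n^2/2+O(n)}$ labelled graphs.
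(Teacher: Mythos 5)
Your skeleton (regularity plus Erd\H{o}s--Simonovits stability to get the $r$-partition, Sauer's Lemma for the counting, universal graphs to force $\HH(r+1,v)$) matches the paper's, but the two steps you yourself flag as delicate are exactly where the argument breaks. First, the claim that a copy of $U(k)$ inside $G[S_j]$ ``paired with the essentially $(1/2)$-random edges to the other parts'' yields $\HH(r+1,v')\subset\P$ is false as a statement about an individual graph: regularity gives density control only between sets of linear size, and says nothing about the edges leaving a fixed set of $2^k+k$ witness vertices, so the Embedding Lemma cannot be applied to them. Indeed no such pointwise contradiction can exist, since the theorem itself permits up to $n^{1-\eps}$ vertices lying in copies of $U(k)$ (they go into $A$). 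The paper's resolution is the real content of the proof and is absent from your sketch: via the sparsening method and a ``reverse'' Sauer argument (Lemmas~\ref{key}--\ref{key3}) one upgrades a large \emph{bad set} $B$ of pairwise-far vertices to sets $T^{(i)}_j$ of size $\delta n$, each with \emph{constant} trace on a subset $B'$ that shatters any system of representatives; only because these sets are of linear size can one regularize them, extract grey pairs (Lemmas~\ref{countA},~\ref{findKtt}), and embed $U(r+1,k)$, hence a forbidden $H_v$ (Lemmas~\ref{UrkRamsey},~\ref{Urkfree}). The residual copies of $U(k)$ that survive this are then disposed of by a separate counting argument (Lemmas~\ref{noUk},~\ref{countU}), not by exhibiting a forbidden subgraph. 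Relatedly, your treatment of the misfit vertices is circular: you propose to rule out many misfits by contradiction with the upper bound you are in the course of proving. The paper instead shows each bad event has probability $o(1)$ by comparing against the \emph{lower} bound $|\P_n|\ge 2^{(1-1/r+o(1))n^2/2}$ (Observation~\ref{ABT}), which is available unconditionally.

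Second, the counting step contains a genuine error: it is not true that there are only $2^{O(m\log m)}$ $U(k)$-free graphs on $m$ vertices. Even granting that such a graph has $O(m^{k-1})$ distinct neighbourhoods, one must still specify those neighbourhoods, each a subset of $[m]$, and the correct count (Theorem~\ref{count}, and more sharply Theorem~\ref{bipsharp}) is $2^{m^{2-\Theta(1/k)}}$, with a matching lower bound of $\exp(m^{2-1/(k-3)})$ coming from extremal numbers of bipartite graphs. The paper obtains $2^{m^{2-\eps}}$ by partitioning one side into blocks of size $m^{\eps}$ and applying Sauer within each block, so that each admissible trace lives in a set of only $m^{\eps}$ elements. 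Your final bound $2^{(1-1/r)n^2/2+n^{2-\eps}}$ survives because the correct count is still subquadratic in the exponent, but the intermediate claim as stated is false and the derivation from ``few neighbourhoods'' does not give it.
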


We shall in fact prove, not just that this structural description holds for almost all graphs $G \in \P$, but that the number of graphs in $\P_n$ for which it fails is at most
$$2^{-n^{2-\eps}}|\P_n|$$
if $n$ is sufficiently large. The final part of Theorem~\ref{structure} is an immediate consequence of conditions $(a)$ and $(b)$, and the following theorem.

\begin{thm}\label{count}
For each $k \in \N$ there exists $\eps = \eps(k) > 0$ such that the following holds. There are at most $2^{n^{2-\eps}}$ distinct $U(k)$-free graphs on $[n]$.
\end{thm}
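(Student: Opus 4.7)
The plan is to read the $U(k)$-free condition as a Vapnik--Chervonenkis bound on the family of neighborhoods, and then to compress the graph block-by-block via Sauer's Lemma.

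\emph{Step 1: Bound the VC dimension.} Let $\mathcal{F} := \{N(v) : v \in [n]\} \subseteq 2^{[n]}$. I claim that if $G$ is $U(k)$-free then $\mathcal{F}$ has VC dimension at most $d := 2k - 1$. Suppose for contradiction that some $B \subset [n]$ with $|B| = 2k$ is shattered, and split $B = B_1 \sqcup B_2$ with $|B_1| = |B_2| = k$. For every $S \subseteq B_1$ and $T \subseteq B_2$ there exists $v_{S,T} \in [n]$ with $N(v_{S,T}) \cap B = S \cup T$; for a fixed $S$, the $2^k$ vertices $\{v_{S,T} : T \subseteq B_2\}$ are pairwise distinct, so at most $|B_1| = k$ of them can lie in $B_1$, and at least one lies outside. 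Choosing such a $v_S \notin B_1$ for each $S \subseteq B_1$ yields a set $A := \{v_S : S \subseteq B_1\}$ of size $2^k$, disjoint from $B_1$, with $G[A, B_1] \cong U(k)$, contradicting $U(k)$-freeness.

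\emph{Step 2: Block decomposition and encoding.} Set $m := \lceil n^{1/(d+1)} \rceil$ and partition $[n]$ into $b := \lceil n/m \rceil$ blocks $T_1, \ldots, T_b$ of sizes at most $m$. Sauer's Lemma bounds each trace family $\mathcal{F}_j := \{N(v) \cap T_j : v \in [n]\}$ by $\sum_{i=0}^{d} \binom{m}{i} \le m^d$ for $n$ large. One may then encode an arbitrary $U(k)$-free graph by recording, first, an ordered listing of each $\mathcal{F}_j$ as an $m^d$-tuple of subsets of $T_j$ (costing at most $(2^m)^{m^d} = 2^{m^{d+1}}$ choices per block, hence $2^{b m^{d+1}}$ in total), and, second, for each vertex $v$ and each block $j$, the index of $N(v) \cap T_j$ within $\mathcal{F}_j$ (costing at most $(m^d)^b = 2^{bd \log m}$ per vertex, hence $2^{nbd \log m}$ overall). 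Since $N(v) = \bigsqcup_j (N(v) \cap T_j)$, this data determines $G$ uniquely.

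\emph{Step 3: Balance and conclude.} The number of $U(k)$-free graphs on $[n]$ is therefore at most $2^{b m^{d+1} + nbd \log m}$. With $m \asymp n^{1/(d+1)}$ one has $bm^{d+1} = nm^d = n^{2 - 1/(d+1)}$ and $nbd \log m = (n^2/m)\, d \log m = O(n^{2 - 1/(d+1)} \log n)$, so setting $\eps := 1/(4k) < 1/(2(d+1))$ gives at most $2^{n^{2-\eps}}$ for $n$ sufficiently large. The most delicate step is the VC-dimension bound: converting abstract shattering into an induced copy of $U(k)$ with $A$ disjoint from $B_1$ relies on the pigeonhole $|B_1| = k < 2^k$, which is what forces $d = 2k - 1$ rather than $d = k$; the block compression and balancing are then routine.
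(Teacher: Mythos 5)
Your proposal is correct, and its engine is the same as the paper's: Sauer's Lemma applied to traces of neighbourhoods on blocks of polynomial size, followed by a two-stage encoding (first the trace family of each block, then an index per vertex per block). The one genuine difference is how you handle the fact that $U(k)$-freeness is a statement about \emph{disjoint} vertex sets while VC shattering is not. The paper sidesteps this by covering $E(G)$ with $O(\log n)$ bipartite graphs and working inside each $G[A,B]$, where disjointness is automatic and the trace family on a block of $B$ has at most $k\binom{|B_j|}{k-1}$ members. You instead stay global and prove that the neighbourhood system of a $U(k)$-free graph has VC dimension at most $2k-1$, using the pigeonhole observation that among the $2^k$ distinct realizers $v_{S,T}$ of the traces extending a fixed $S \subseteq B_1$, at most $k$ can lie in $B_1$ itself; this correctly produces a copy of $U(k)$ with the $2^k$-side disjoint from $B_1$. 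The trade-off is cosmetic: you lose a factor of $2$ in the exponent of the block size (hence a smaller $\eps$, namely $1/(4k)$ rather than anything close to $1/(k+1)$), but you avoid the $\log n$-fold bipartite covering. Since Theorem~\ref{count} only asks for \emph{some} $\eps(k)>0$, either route suffices, and indeed Section~\ref{bipsec} of the paper shows the exponent can be pushed much further than either argument gives. (Two trivial slips: the inequality $\sum_{i=0}^{d}\binom{m}{i}\le m^{d}$ fails for $d=1$ and needs $m$ large for small $d$ — replace the bound by $m^{d}+1$ or $m^{d+1}$ and nothing changes; and you should note explicitly that the trace family $\mathcal{F}_j$ inherits the VC bound from $\mathcal{F}$, which is immediate since a subset of $T_j$ shattered by $\mathcal{F}_j$ is shattered by $\mathcal{F}$.)
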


The structure of the remainder of the paper is as follows. First, in Section~\ref{toolsec}, we shall state the main tools we shall use in the paper: these include the Szemer\'edi Regularity Lemma, the Erd\H{o}s-Simonovits Stability Theorem, and Sauer's Lemma. In Section~\ref{sketchsec} we give a sketch of the proof of our main result in the case $\chi_c(\P) = 2$, and in Section~\ref{T2sec} we prove Theorem~\ref{count}, and deduce our bounds on the speed of $\P$. In Section~\ref{lemmasec} we prove various lemmas on $U(k)$-free graphs, and in Section~\ref{proofsec} we prove Theorem~\ref{structure}. In Section~\ref{bipsec} we shall show how to prove even sharper results in the bipartite case, and in Section~\ref{qsec}, we finish by stating a couple of questions and open problems.

\section{tools}\label{toolsec}

In this section we shall recall some of the important tools we shall use in order to prove Theorems~\ref{structure} and~\ref{count}. In particular, we shall recall the Szemer\'edi Regularity Lemma~\cite{Sz}, the Erd\H{o}s-Simonovits Stability Theorem~\cite{ES1,ES2}, and Sauer's Lemma~\cite{Sauer}.

Let $G$ be a graph, let $A,B \subset V(G)$ with $A \cap B = \emptyset$, and let $\eps > 0$. We write $d(A,B)$ for the density of the bipartite graph $G[A,B]$. We say that the pair $(A,B)$ is $\eps$-regular if
$$|d(A,B) - d(X,Y)| < \eps$$
for every $X \subset A$ and $Y \subset B$ with $|X| \ge \eps|A|$ and $|Y| \ge \eps|B|$.

\begin{defn}
A partition $A_1 \cup \ldots \cup A_k$ of $V(G)$ is said to be a \emph{Szemer\'edi partition} of $G$ for $\eps$ if $|A_1| \le \ldots \le |A_k| \le |A_1| + 1$, and all but $\eps k^2$ of the pairs $(A_i,A_j)$ are $\eps$-regular.
\end{defn}

\begin{SRL}[Szemer\'edi, 1976]
Let $\eps > 0$ and $m \in \N$. There exists an $M = M(m,\eps) \in \N$ such that, given any graph $G$, there exists a Szemer\'edi partition of $G$ for $\eps$ into $k$ parts, for some $m \le k \le M$.
\end{SRL}

We shall also need the following `refinement' version of Szemer\'edi's Lemma. Let $G$ be a graph and $P = (U_1, \ldots, U_t)$ be a partition of $V(G)$. A \emph{Szemer\'edi refinement} of the partition $P$ for $\eps$ is a refinement of the partition $P$ which is also a Szemer\'edi partition of $G$ for $\eps$.

\begin{RSRL}
Let $\eps > 0$ and $m \in \N$. There exists an $M = M(m,\eps) \in \N$ such that, given any graph $G$, and any partition $P$ of $V(G)$ into at most $m$ parts, there exists a Szemer\'edi refinement of $P$ for $\eps$ into $k$ parts, for some $m \le k \le M$.
\end{RSRL}

Let $T_r(n)$ denote the Tur\'an graph, and $t_r(n) = e(T_r(n))$ the Tur\'an number, as usual.

\begin{ESStab}[Erd\H{o}s, Simonovits, 1968]
For each $r \in \N$ and $\eps > 0$, there exists a $\delta > 0$ such that the following holds. Let $G$ be a graph, and suppose that $K_{r+1} \not\subset G$, but
$$e(G) \, \ge \, t_r(n) \, - \, \delta n^2.$$
Then we can change $G$ into $T_r(n)$ by switching at most $\eps n^2$ edges.
\end{ESStab}

We say a set $X \subset [n]$ is shattered by a family of subsets $\A \subset \P(n)$ if for every set $B \subset X$, there exists an $A \in \A$ such that $B \cap X = A$.

\begin{Sauer}[Sauer, 1972]
Let $\A \subset \P(n)$, and suppose that
$$|\A| \; > \; \sum_{i=0}^{k-1} {n \choose i}.$$
Then there exists a $k$-set $X \subset [n]$ which is shattered by $\A$.
\end{Sauer}

Given $A,B \subset V(G)$, we shall also write $A \to B$ to mean $A$ `shatters' $B$, i.e., that $G[A',B] = U(k)$ for some $A' \subset A$, where $k = |B|$. Note that $A \to B$ if and only if the family of sets $\A = \{\Gamma(v) \cap B : v \in A\}$ shatters $B$.

Given $\eps,\delta > 0$, a pair $(A,B)$ of subsets $A,B \subset V(G)$ is said to be $(\eps,\delta)$-grey if the graph $G[A,B]$ is $\eps$-regular and has density $\delta \le d(A,B) \le 1 - \delta$.

The following Embedding Lemma is well-known (see~\cite{KS}, Theorem 2.1).

\begin{embed}
Let $\delta > 0$, and let $H$ be a graph on $r$ vertices. There exist $\eps > 0$ and $n_0 = n_0(\eps,\delta,r) \in \N$ such that the following holds.

Let $G$ be a graph on vertex set $A_1 \cup \ldots \cup A_r$, where $|A_j| \ge n_0$ for each $j \in [r]$. Then there exist distinct vertices $v_k \in A_k$ for each $k \in [r]$ such that, whenever the pair $(A_i,A_j)$ is $(\eps,\delta)$-grey, we have
$$v_iv_j \in E(G) \; \Leftrightarrow \; ij \in E(H).$$
\end{embed}

We shall also use the following simple result, known as the Slicing Lemma (see~\cite{KS}, Fact 1.5).

\begin{slice}
If $(A,B)$ is $(\eps,\delta)$-grey and $X \subset A$, $Y \subset B$ with $|X| \ge \alpha|A|$ and $|Y| \ge \alpha|B|$, then the pair $(X,Y)$ is $(\eps',\delta')$-grey, where $\eps' = 2\eps/\alpha$ and $\delta' = \delta/\alpha - \eps$.
\end{slice}

Finally, we make a trivial observation.

\begin{obs}\label{ABT}
Let $\P$ be a hereditary property of graphs, and suppose $\chi_c(\P) = r$. Then
$$|\P_n| \; \ge \; 2^{(1 - 1/r + o(1))n^2/2}.$$
\end{obs}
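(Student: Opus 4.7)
The plan is to exhibit an explicit family of $2^{(1-1/r + o(1))n^2/2}$ distinct labelled graphs on $[n]$ all belonging to $\P$. Since $\chi_c(\P) = r$, by definition there exists a vector $v \in \{0,1\}^r$ such that $\HH(r,v) \subset \P$.

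Fix a partition $(A_1, \ldots, A_r)$ of $[n]$ into parts of sizes as equal as possible, so that $|A_j| \in \{\lfloor n/r \rfloor, \lceil n/r \rceil\}$ for every $j \in [r]$. For each of the $2^{e(T_r(n))}$ possible ways to choose a bipartite graph on each cross-pair $(A_i, A_j)$ with $i \neq j$, form a graph $G$ on vertex set $[n]$ by taking those chosen cross-edges together with the edges inside each $A_j$ prescribed by $v_j$ (all edges if $v_j = 1$, none if $v_j = 0$). By construction every such $G$ lies in $\HH(r,v)$, and hence in $\P$, and distinct choices of the cross-edges plainly yield distinct labelled graphs. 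Therefore
$$|\P_n| \; \ge \; 2^{\,t_r(n)}.$$

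Finally, a direct computation gives $t_r(n) = \binom{n}{2} - \sum_{j=1}^r \binom{|A_j|}{2} = (1 - 1/r)n^2/2 - O(n)$, so
$$|\P_n| \; \ge \; 2^{(1 - 1/r)n^2/2 - O(n)} \; = \; 2^{(1 - 1/r + o(1))n^2/2},$$
as claimed. There is no real obstacle here; this is just the standard Turán-type lower bound, included because it matches the upper half of the Alekseev--Bollob\'as--Thomason Theorem and of Theorem~\ref{structure}.
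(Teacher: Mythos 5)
Your proof is correct and is exactly the argument the paper intends: the paper's one-line proof ("there are this many graphs in $\HH(r,v)$") is precisely your construction of $2^{t_r(n)}$ graphs obtained by fixing a balanced $r$-partition, filling the parts according to $v$, and choosing the cross-edges freely. You have simply written out the routine details that the authors leave implicit.
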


\begin{proof}
By definition: there are this many graphs in $\HH(r,v)$.
\end{proof}

As noted in the introduction, Alekseev~\cite{Alek} and Bollob\'as and Thomason~\cite{BT1,BT2} independently proved the corresponding upper bound.

\section{Sketch of the proof}\label{sketchsec}

Before proving Theorem~\ref{structure}, let us a give a brief (and imprecise) sketch of the proof. For simplicity we shall only consider the case $\chi_c(\P) = 2$.

Let $\eps,\delta,\gamma,\alpha > 0$ be sufficiently small, with $\eps \ll \delta \ll \gamma \ll \alpha$, and let $G \in \P_n$, where $n$ is sufficiently large. We shall say that the bipartite subgraph of $G$ induced by a pair of sets is \emph{grey} if it is $(\eps,\delta)$-grey, i.e., it is $\eps$-regular and of density between $\delta$ and $1-\delta$. We shall also use the following definition of a \emph{generalized universal graph}:
\begin{align*}
& \textup{Let $U(3,k)$ denote the $3$-partite graph on vertex set $A \cup B \cup C$, where} \\
& \textup{$|A| = 2^{|B| + |C|}$, $|B| = 2^k$ and $|C| = k$, such that $B \to C$ and $A \to B \cup C$.}
\end{align*}

The first step in the proof is to show that, for almost all $G \in \P$, there is a partition $(S_1,S_2)$ of $V(G)$ such that,
\begin{itemize}
\item[$(a)$] Each part is a union of (an almost equal number of) Szemer\'edi sets.
\item[$(b)$] Each part contains at most $\gamma m^2$ grey pairs (where $m$ is the total number of Szemer\'edi sets).
\end{itemize}
The proof of this (see Lemma~\ref{S1S2}) follows as in~\cite{BBS1}, by applying the Szemer\'edi Regularity Lemma and Erd\H{o}s-Simonovits Stability Theorem. The key observation is that the `cluster graph' contains no grey triangles (see Lemma~\ref{embed2}). We call such a partition $(S_1,S_2)$ a \emph{BBS-partition} of $G$.

Next we consider a maximal set $B \subset V(G)$ of vertices such that, for each $j = 1,2$, and each pair $b,b' \in B$,
$$\left| \big( \Gamma(b) \cap S_j \big) \triangle \big( \Gamma(b') \cap S_j \big) \right| \; \ge \; \alpha n.$$
We shall sometimes refer to a set with this property as a set of `bad' vertices. The main step in the proof is to show that, for almost every $G \in \P$, $|B|$ is bounded.

Indeed, we show (see Lemmas~\ref{key}, \ref{key2} and~\ref{key3}) that, for any $t \in \N$, if $|B|$ is sufficiently large then there exist a set $B' \subset B$ of size $2^{2t}$, and sets $T^{(1)}_1, \ldots, T^{(1)}_t \subset S_1$ and $T^{(2)}_1, \ldots, T^{(2)}_t \subset S_2$, with $|T^{(i)}_j| \ge \delta n$, such that the following holds:
\begin{itemize}
\item[$(a)$] All vertices of $T^{(i)}_j$ have the same neighbourhood in $B'$.
\item[$(b)$] If $b_1,b_2 \in B'$ with $b_1 \neq b_2$, then $\Gamma(b_1) \cap \bigcup T^{(i)}_j \neq \Gamma(b_2) \cap \bigcup T^{(i)}_j$. (Since there are $2^{2t}$ vertices in $B'$, this means they shatter any set of representatives of the sets $T^{(i)}_j$.)
\end{itemize}
The proof of Lemma~\ref{key} uses the so-called `sparsening method', together with a repeated application of the `reverse' Sauer's Lemma; that is, Sauer's Lemma combined with Lemma~\ref{backwards}, the observation that if $U \to V$, then $V \to U'$ for some (large) $U' \subset U$.

Now, suppose such a set $B'$ exists in $G$. We show (see Lemma~\ref{countA}) that in almost every such graph we can find subsets $W^{(i)}_j \subset T^{(i)}_j$ such that, for each $p,q \in [t]$, the pair $(W^{(1)}_p,W^{(2)}_q)$ is grey. Hence, by the Embedding Lemma, we can find a copy of $U(3,k)$ in $G$, for arbitrarily large $k$ (see Lemma~\ref{countB}). But this is a contradiction, since $\chi_c(\P) = 2$ (see Lemma~\ref{Urkfree}).

We have shown that $|B|$ is bounded for almost every $G \in \P$. Since $B$ is maximal, it follows that each vertex $v \in V(G)$ is a `clone' of some vertex $b \in B$ with respect to one of the sets $S_j$, i.e.,
$$|(\Gamma(v) \cap S_j) \triangle (\Gamma(b) \cap S_j)| \; \le \; \alpha n.$$
Since we expect to have few choices inside the sets $S_j$, it would be natural to expect that $v \in S_j$. Although this is not necessarily true for every vertex $v \in V(G)$, it turns out that, for almost every graph $G \in \P$, we can make it true by `adjusting' the partition $(S_1,S_2)$ (see Lemmas~\ref{countC} and~\ref{countD}). We obtain a new partition, $(S'_1,S'_2)$, which is `close' to the original partition (in the sense that $|S_j \triangle S'_j| \le \alpha n$), such that for each $j = 1,2$, and every $v \in S_j'$, $v$ is a clone of some $b \in B$ with respect to $S_j'$.

Finally, let $U_j \subset S'_j$ be the vertex set of a maximal collection of disjoint copies of $U(k)$ in $S_j$. (When $\chi_c(\P) \ge 3$ this step is more complicated, see the algorithm before Lemma~\ref{countU}). We claim that $|U_j| = O\left( n^{1-\eps} \right)$ for almost every $G \in \P$; to prove this, we simply count (see Lemma~\ref{countU}). First, note that there are at most $n^n$ choices for the partition of $V(G)$, and for the edges incident with $B$, and at most
$$2^{\alpha |U_j| n + n^{2-\eps}}$$
choices for the edges inside $S_j$. (This follows because each vertex of $U_j$ is a clone of a vertex in $B$ with respect to $S_j$, and by Theorem~\ref{count}, using the fact that $S'_j \setminus U_j$ is $U(k)$-free.) We will show further (see Lemma~\ref{noUk}) that we have at most $2^{(1/2 - 2\alpha)|U_j|n}$ choices for the edges between $U_j$ and $V(G) \setminus S'_j$. Thus the total number of choices, $|\P_n|$, satisfies
\begin{eqnarray*}
\log_2 \big( |\P_n| \big) & \le & \big( |S'_1| - |U_1| \big)\big( |S_2'| - |U_2|\big) \,+\, \left( \frac{1}{2} - \alpha \right) \big( |U_1| + |U_2| \big) n \, + \, O\left( n^{2-\eps} \right)\\
& \le & \frac{n^2}{4} \, - \, \alpha \big( |U_1| + |U_2| \big) n \, + \, O\left( n^{2-\eps} \right),
\end{eqnarray*}
which implies that $|U_1| + |U_2| = O\left( n^{1 - \eps} \right)$, as required. (We have assumed for simplicity that $|U_j| = o(n)$; the calculation in the other case is essentially the same.) Letting $A = U_1 \cup U_2 \cup B$, we obtain Theorem~\ref{structure}.

\section{Proof of Theorem~\ref{count}}\label{T2sec}

In this section we give a short proof of Theorem~\ref{count}. Our main tool is Sauer's Lemma.

\begin{proof}[Proof of Theorem~\ref{count}]
Let $k \in \N$, and let $G$ be a $U(k)$-free graph on $n$ vertices. We first claim that, given any bipartition $(A,B)$ of the vertex set $V(G)$, there are at most $2^{n^{2-\eps}}$ choices for the cross-edges.

Indeed, let $0 < \eps < 1/(k+1)$, and partition $B$ into sets $B_1, \ldots, B_t$ of size about $n^\eps$. By Sauer's Lemma, for each $j \in [t]$ we have
$$|\{ S \subset B_j \, : \, \exists \, a \in A \textup{ such that } \Gamma(a) \cap B_j = S \}| \; \le \; k {{|B_j|} \choose {k-1}} \; < \; n^{\eps k},$$
since $G[A,B_j]$ is $U(k)$-free. Thus the number of choices for $G[A,B]$ is at most
$$\prod_{j = 1}^t {{2^{|B_j|}} \choose n^{\eps k} } \left( n^{\eps k} \right)^n  \; \le \; 2^{\sum_j |B_j| n^{\eps k}} \left( 2^{\eps k n \log n} \right)^{n^{1 - \eps}} \; \le \; 2^{n^{2-\eps} \log n},$$
since $\sum_j |B_j| = n$ and $1 + \eps k < 2 - \eps$.

Finally, we may cover $E(G)$ with $\log n$ bipartite graphs, and so the number of choices for $G$ is at most $2^{n^{2-\eps} (\log n)^2} \ll 2^{n^{2-\eps'}}$ for any $\eps' < \eps$, as required.
\end{proof}

The reader will have noticed that the value of $\eps$ obtained above is not best possible; a more precise calculation is undertaken in Section~\ref{bipsec}.

\section{Some lemmas on universal graphs}\label{lemmasec}

In this section we state some of the lemmas we'll use to prove Theorem~\ref{structure}. We begin with a simple but key observation. Recall that we write $A \to B$ to mean that $A$ shatters $B$, i.e., $G[A',B] = U(k)$ for some $A' \subset A$, where $|B| = k$.

\begin{lemma}\label{backwards}
Let $G$ be a graph and let $t \in \N$. Suppose $A,B \subset V(G)$, with $A \to B$ and $|B| \ge 2^t$. Then there exist subsets $A' \subset A$ and $B' \subset B$ such that $B' \to A'$, and $|A'| = t$.
\end{lemma}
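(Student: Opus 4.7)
\medskip

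The plan is to exploit the fact that $A \to B$ gives us access to \emph{every} subset of $B$ as a neighbourhood, and then to realize a shattering of some $A' \subset A$ by viewing the $t$ chosen vertices of $A'$ as the coordinate functions of a balanced partition of $B$.

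First, unpack the hypothesis. Since $A \to B$, there is a set $A^* \subset A$ with $G[A^*,B] \cong U(|B|)$; in particular, for every subset $S \subset B$ there exists a vertex $a_S \in A^*$ with $\Gamma(a_S) \cap B = S$. So I may as well pretend the family $\{\Gamma(a) \cap B \,:\, a \in A^*\}$ is all of $\mathcal{P}(B)$.

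Next, since $|B| \ge 2^t$, partition $B$ into $2^t$ nonempty cells $(B_v)_{v \in \{0,1\}^t}$, indexed by the Boolean cube. For each coordinate $i \in [t]$, set
$$S_i \; := \; \bigcup_{v \,:\, v_i = 1} B_v \; \subset \; B,$$
and choose a vertex $a_i \in A^*$ with $\Gamma(a_i) \cap B = S_i$. Let $A' = \{a_1, \ldots, a_t\}$; by construction $|A'| = t$ (the $a_i$ are distinct because their neighbourhoods in $B$ are the $t$ distinct coordinate half-cubes). Finally, pick one representative $b_v \in B_v$ for each $v \in \{0,1\}^t$ and let $B' = \{b_v \,:\, v \in \{0,1\}^t\}$, so $|B'| = 2^t$.

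It remains to verify $B' \to A'$, i.e.\ that $G[B'', A'] \cong U(t)$ for some $B'' \subset B'$ (in fact $B'' = B'$ will work). For each $v \in \{0,1\}^t$ and each $i \in [t]$,
$$b_v a_i \in E(G) \;\Longleftrightarrow\; b_v \in S_i \;\Longleftrightarrow\; v_i = 1,$$
since the cell $B_v$ is contained in $S_i$ precisely when $v_i = 1$. Hence $\Gamma(b_v) \cap A' = \{a_i \,:\, v_i = 1\}$, and as $v$ varies over $\{0,1\}^t$ these neighbourhoods exhaust $\mathcal{P}(A')$. This says exactly that $B'$ shatters $A'$, completing the proof.

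There is no real obstacle here; the content is that $A \to B$ is strong enough to let us \emph{dictate} the neighbourhoods on the $A$-side, so the reverse shattering is produced by the trivial combinatorial observation that $2^t$ points of $B$ can be separated by $t$ appropriately chosen subsets (the coordinates of a $\{0,1\}^t$-labelling).
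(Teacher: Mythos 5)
Your proof is correct and is essentially the paper's argument: both label $B$ by the cube $\{0,1\}^t$ and take $A'$ to be the $t$ vertices of $A$ whose neighbourhoods in $B$ are the coordinate half-cubes, so that the points of $B$ are separated (and hence $A'$ is shattered) by construction. The only cosmetic difference is that the paper first shrinks $B$ to size exactly $2^t$ and uses the faces through the origin, whereas you keep all of $B$, partition it into $2^t$ cells, and take one representative per cell.
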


\begin{proof}
Let $A \to B$ be as described, and assume (taking subsets if necessary) that $|B| = 2^t$ and $G[A,B] = U(2^t)$. Label the elements of $B$ with the vertices of the hypercube $[2]^t$ arbitrarily, and consider the faces of this cube (i.e., the subcubes of dimension $t-1$) which contain the origin. Denote by $F_1, \ldots, F_t$ the corresponding subsets of $B$.

Now, there is an obvious bijection $\phi$ between vertices of $A$ and subsets of $B$ (a vertex is mapped to its neighbourhood), and so each set $F_j \subset B$ corresponds to a vertex of $A$. Let $A' = \{\phi(F_1), \ldots, \phi(F_t)\}$.

We claim that for each pair of vertices $b,b' \in B$, we have $\Gamma(b) \cap A' \neq \Gamma(b') \cap A'$. Indeed, if $b \neq b'$ then their labels must differ in some direction on the cube, so $b \in F_j$, $b' \notin F_j$, say. But then $\phi(F_j) \in \Gamma(b) \setminus \Gamma(b')$, as claimed. Thus $A'$ and $B'$ are sets as required by the lemma.
\end{proof}

We shall also use the following slight extension of Lemma~\ref{backwards}.

\begin{lemma}\label{back2}
Let $G$ be a graph and let $r,t \in \N$. Let $A_1, \ldots, A_r, B \subset V(G)$ be disjoint sets, with $|B| \ge 2^{rt}$, and $A_j \to B$ for each $j \in [r]$. Then there exist subsets $A_j' \subset A_j$ and $B' \subset B$ such that $B' \to \bigcup_j A_j'$, and $|A_j'| = t$ for each $j \in [r]$.
\end{lemma}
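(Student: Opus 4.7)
The plan is to mimic the proof of Lemma~\ref{backwards}, replacing the $t$-dimensional hypercube by an $rt$-dimensional one whose coordinates are indexed by pairs $(j,s) \in [r] \times [t]$. First I would pass to a subset $B_0 \subset B$ of size exactly $2^{rt}$ and fix a bijection $B_0 \leftrightarrow [2]^{rt}$. For each pair $(j,s)$ let $F_{j,s} \subset B_0$ be the coordinate hyperplane through the origin $\{b \in B_0 : b_{(j,s)} = 0\}$; there are $rt$ such hyperplanes, each of size $2^{rt-1}$.

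Next, I would observe that the hypothesis $A_j \to B$ descends to $B_0$: since each subset of $B$ is realized as the $B$-neighbourhood of some vertex of $A_j$, restricting to $B_0$ shows that every subset of $B_0$ is realized as $\Gamma(a) \cap B_0$ for some $a \in A_j$. Applying this to $T = F_{j,s}$, pick $a_{j,s} \in A_j$ with $\Gamma(a_{j,s}) \cap B_0 = F_{j,s}$ for each $(j,s)$, and set $A_j' = \{a_{j,s} : s \in [t]\}$. The $F_{j,s}$ are pairwise distinct, so $|A_j'| = t$, and the $A_j'$ are automatically disjoint since the $A_j$ are.

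Finally, I would show that $B' := B_0$ shatters $\bigcup_j A_j'$, which has exactly $rt$ elements. Given distinct $b, b' \in B_0$, their labels in $[2]^{rt}$ differ in some coordinate $(j,s)$; without loss of generality $b \in F_{j,s}$ and $b' \notin F_{j,s}$, so $a_{j,s}$ distinguishes them. Hence the map $b \mapsto \Gamma(b) \cap \bigcup_j A_j'$ is an injection between two sets of size $2^{rt}$, therefore a bijection, so every subset of $\bigcup_j A_j'$ is realised as such a neighbourhood. This is precisely $B_0 \to \bigcup_j A_j'$, completing the proof.

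There is no real obstacle here: the argument is a direct lift of Lemma~\ref{backwards} to a product hypercube, and the only point needing a sentence of justification is that the shattering condition $A_j \to B$ passes to $A_j \to B_0$, which is immediate from the definition of shattering.
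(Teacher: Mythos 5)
Your proof is correct and takes essentially the same route as the paper's: both label $B$ by the $rt$-dimensional cube, realise the $rt$ coordinate hyperplanes by vertices distributed $t$ per part $A_j$, and conclude via the injectivity (hence bijectivity) of the trace map $b \mapsto \Gamma(b) \cap \bigcup_j A_j'$. The only cosmetic difference is that you inline the hypercube construction rather than invoking Lemma~\ref{backwards} with parameter $rt$ and then selecting disjoint blocks of $t$ coordinates for each $j$, as the paper does.
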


\begin{proof}
Assume (by taking a subset if necessary) that $|B| = 2^{rt}$. By Lemma~\ref{backwards}, there exist subsets $A^*_j \subset A_j$ such that $|A^*_j| = rt$ and $B \to A^*_j$ for each $j \in [r]$. Moreover, we can choose the sets $A^*_j$ so that the following holds:
\begin{itemize}
\item Let $(v_1^{(1)},\ldots,v_1^{(rt)})$ be an arbitrary order for the elements of $A^*_1$. Then, for each $i \in [rt]$ and $j \in [r]$, there exists a vertex $v^{(i)}_j \in A^*_j$ with the same neighbourhood in $B$ as $v^{(i)}_1$.
\end{itemize}
Indeed, to do this we simply use the same $[2]^t$-labelling of $B$ (see the proof of Lemma~\ref{backwards} above) for each set $A_j$.

Now we simply have to choose $r$ disjoint $t$-subsets $A'_j \subset A^*_j$ for $j \in [r]$. To spell it out, let $A'_j = \{v^{(i)}_j : (j-1)t + 1 \le i \le jt\}$ for each $j \in [r]$. It is clear that $B \to \bigcup_j A'_j$, as required.
\end{proof}

In Section~\ref{sketchsec} we used the $3$-partite graph $U(3,k)$. We now make the natural generalization to $r$-partite graphs, which we shall denote $U(r,k)$.

\begin{defn}
For each $k,r \in \N$, define the \emph{generalized universal graph}, $U(r,k)$, to be the $r$-partite graph on vertex set $A_1 \cup \ldots \cup A_r$, where $|A_1| = k$ and $|A_{j+1}| = 2^{\sum_1^j |A_i|}$ for each $1 \le j \le r-1$, such that
$$A_{j+1} \, \to \, A_1 \cup \ldots \cup A_j$$
for each $1 \le j \le r - 1$.

Moreover, for each $v \in \{0,1\}^r$, define $U^*_v(r,k)$ to be the graph on vertex set $A_1 \cup \ldots \cup A_r$ such that the induced $r$-partite graph is $U(r,k)$, and $G[A_j]$ is either complete or empty for each $j \in [r]$, with $G[A_j]$ complete if and only if $v_j = 1$.
\end{defn}

We next apply Lemma~\ref{backwards} to prove a Ramsey-type theorem for the graphs $U(r,k)$.

\begin{lemma}\label{UrkRamsey}
For each $r,k \in \N$, there exists $K = K(r,k) \in \N$ such that the following holds. Let $G$ be a graph on vertex set $A_1 \cup \ldots \cup A_r$, and suppose $G[A_1,\ldots,A_r] = U(r,K)$. Then $U^*_v(r,k) \le G$ for some $v \in \{0,1\}^r$.
\end{lemma}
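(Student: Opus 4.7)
I plan to prove the lemma by induction on $r$. The base case $r=1$ follows immediately from Ramsey's theorem: for $K\ge R(k,k)$, the graph $G[A_1]$ contains a monochromatic $k$-subset, which is $U^*_v(1,k)$ for a suitable $v\in\{0,1\}$.

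For the inductive step $r\ge 2$, I would build $A''_1,\ldots,A''_r$ iteratively. Having fixed $A''_1,\ldots,A''_{j-1}$ (each internally monochromatic of some colour $v_i$, with $A''_i\to A''_1\cup\cdots\cup A''_{i-1}$ for each $i<j$), set $B:=A''_1\cup\cdots\cup A''_{j-1}$ and partition $A_j$ into $2^{|B|}$ classes $\{C_S\}_{S\subset B}$ according to neighbourhood in $B$; each class has size $|A_j|/2^{|B|}$, which is enormous when $K$ is large. Applying Ramsey inside each $C_S$ yields a monochromatic sub-class $C'_S$, and a pigeonhole over their colours selects a majority colour $v_j$ achieved in at least $2^{|B|-1}$ of the classes. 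Taking one representative from each majority sub-class produces a set $W\subset A_j$ of $\ge 2^{|B|-1}$ vertices with pairwise distinct restricted neighbourhoods in $B$; Sauer's Lemma applied to the family $\{\Gamma(w)\cap B:w\in W\}$ then produces a subset $X\subset B$ shattered by this family. Combining this with a further Ramsey-type argument inside $W$ (exploiting that each subset $S\subset X$ admits many witnesses in $W$, at least $2^{|B|-|X|-1}$ of them, together with the fact that within each majority class we previously secured a monochromatic sub-class of arbitrary size) extracts the required monochromatic $A''_j$ of size $2^{|X|}$ satisfying $A''_j\to X$.

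The main obstacle will be the final extraction step: producing a monochromatic set of size $2^{|X|}$ inside $W$ amounts, in essence, to finding a monochromatic transversal across $2^{|X|}$ sub-classes indexed by subsets of $X$, and standard multi-partite Ramsey for transversals fails in general. One must instead exploit the large multiplicity of candidates per sub-class, iterating Ramsey and Sauer to align the monochromatic colour $v_j$ with the shattering structure simultaneously. Moreover, the set $X$ returned by Sauer's Lemma is typically a proper subset of $B$, so the earlier parts must be restricted to $A''_i\cap X$, shrinking them below the sizes $a_i$ prescribed by $U(r,k)$. To absorb this shrinking, I would apply the induction hypothesis with a slack parameter $k'\gg k$, and restrict each $A''_i$ to its prescribed size $a_i$ only at the very end of the construction. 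Because the sizes $|A_j|$ already grow as a tower of exponentials in $K$, the constant $K(r,k)$ is consequently chosen to grow as a tower of exponentials in $r$ and $k$.
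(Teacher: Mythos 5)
Your induction on $r$ and the Ramsey base case match the paper, but the inductive step has a genuine gap --- and it is exactly the one you flag yourself. Applying Ramsey separately inside each neighbourhood-class $C_S$ controls only the pairs \emph{within} each class; the set $A''_j$ you ultimately need must be monochromatic as a whole, so all pairs \emph{across} the chosen representatives must also agree, and that is a monochromatic-transversal problem which, as you note, has no general Ramsey theorem. The proposal offers no mechanism to close this: ``iterating Ramsey and Sauer'' does not obviously terminate, since each application of Sauer's Lemma to the $\ge 2^{|B|-1}$ surviving patterns shatters only a set $X$ of size roughly $|B|/2$, and restricting the earlier parts $A''_i$ to meet $X$ destroys the nested relations $A''_i \to A''_1 \cup \cdots \cup A''_{i-1}$ (a subset of a shattering set need not shatter), so the losses compound at every stage rather than being absorbed by a slack parameter.

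The paper resolves precisely this point by reversing the order of quantifiers. The induction hypothesis is applied once to $A_1 \cup \cdots \cup A_{r-1}$ to produce a single huge copy $H$ of $U^*_{v'}(r-1,K')$ with $K' = mk$, which is then cut into $m = \binom{R(T)}{T}$ vertex-disjoint copies $H^{(1)},\ldots,H^{(m)}$ of $U^*_{v'}(r-1,k)$, where $T = 2^{|U^*_{v'}(r-1,k)|}$. Since $A_r \to V(H)$, the bipartite graph $G[A_r, V(H)]$ contains \emph{every} prescribed bipartite graph on suitable vertex sets; the paper realizes a designed graph $F$ on $X \cup Y$ with $|X| = R(T)$ in which every $T$-subset $W \subset X$ shatters its own designated copy $Y_W = V(H^{(\phi(W))})$. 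A single application of Ramsey's Theorem to $X$ then yields a monochromatic $T$-set $B_r$, and \emph{by construction} $B_r$ already shatters one of the disjoint copies --- no transversal Ramsey is needed, and no earlier part ever has to be shrunk after the fact. This pre-arrangement of the shattering structure before Ramsey is invoked is the key idea missing from your argument; without it, or some substitute for it, the final extraction step does not go through.
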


\begin{proof}
The proof is by induction on $r$. For $r = 1$ the lemma is just Ramsey's Theorem. For $r = 2$ we first apply Ramsey to $A_1$, to obtain a subset $B_1 \subset A_1$ such that $G[B_1]$ is either complete or empty. Note that $A_2 \to B_1$, so by Lemma~\ref{backwards}, there exists a set $B'_2 \subset A_2$ such that $B_1 \to B'_2$. Finally, applying Ramsey to $B'_2$, we obtain a subset $B_2 \subset B'_2$ such that $G[B_2]$ is complete or empty, and $B_1 \to B_2$. %Finally, applying Lemma~\ref{backwards} again, we obtain a subset $B_1 \subset B'_1$ such that $B_2 \to B_1$, as required.

So let $r \ge 3$, and assume the result holds for smaller values of $r$. Let $t = |U^*_{v'}(r-1,k)|$, $T = 2^t$ and $m = {{R(T)} \choose T}$, where $R(T)$ is the Ramsey number. Let $K' = m k$, and let $K = K(r-1,K')$. We claim that the lemma holds for $K$.

First, by the induction hypothesis, there exists a copy $H$ of $U^*_{v'}(r-1,K')$ in $G[A_1 \cup \ldots \cup A_{r-1}]$, for some $v' \in \{0,1\}^{r-1}$. Note that $A_r \to V(H)$, and let $V(H) = B_1 \cup \ldots \cup B_{r-1}$, where $B_{j+1} \to B_1 \cup \ldots \cup B_j$ for each $1 \le j \le r - 2$.

Since $A_r \to V(H)$, the bipartite graph $G[A_r,V(H)]$ contains every (small) bipartite graph as an induced subgraph. We shall define a specific such bipartite graph, $F$, and show that it contains $U^*_v(r,k)$ for some $v \in \{0,1\}^r$.

Indeed, since $K' = m k$, $H$ contains $m$ disjoint copies of $U^*_{v'}(r-1,k)$. To see this, simply partition $B_1$ into $m$ equal-size parts, $B^{(1)}_1, \ldots, B^{(m)}_1$, and successively choose disjoint sets $B^{(1)}_j, \ldots, B^{(m)}_j \subset B_j$ such that $B^{(i)}_j \to B^{(i)}_1 \cup \ldots \cup B^{(i)}_{j-1}$. Let $H^{(i)}$ denote the graph induced by $B_1^{(i)} \cup \ldots \cup B^{(i)}_{r-1}$.

Now, define the bipartite graph $F$ as follows. Let $\I(T)$ denote the set of subsets of $\{1,\ldots,R(T)\}$ of size $T$.
\begin{itemize}
\item[$(a)$] $V(F) = X \cup Y$, with $X = [R(T)]$ and $Y = \ds\bigcup_{Z \in \I(T)} Y_Z$, where the sets $Y_Z$ are disjoint, and $|Y_Z| = t$ for each $Z \in \I(T)$.
\item[$(b)$] For each subset $W \subset X$ of size $T$, we have $W \to Y_W$.
\item[$(c)$] The other edges may be chosen arbitrarily.
\end{itemize}
Note that no two vertices of $X$ have the same neighbourhood in $Y$, since each such pair is contained in some $T$-set $W \subset X$, so differs on $Y_W$. Therefore, since $A_r \to V(H)$, it follows that there exists a set $A_r' \subset A_r$ such that
$$G\left[ A_r', V\big( H^{(1)} \big) \cup \ldots \cup V \big( H^{(m)} \big) \right] \; = \; F,$$
with $Y_Z = V(H^{(\phi(Z))})$ for each $Z \in \I(T)$, for some bijection $\phi: \I(T) \to [m]$.

Finally we apply Ramsey's Theorem to $A_r'$, to obtain a set $B_r$ of size $T$ such that $G[B_r]$ is complete or empty. By the definition of $F$ we have $B_r \to Y_{B_r}$, so $B_r \to V(H^{(i)})$ for some $i \in [m]$. It follows that the set $B_r \cup Y_{B_r}$ induces a copy of $U^*_v(r,k)$ for some $v \in \{0,1\}^r$, as required.
\end{proof}

The following immediate consequence of Lemma~\ref{UrkRamsey} says that if $\chi_c(\P) = r$, and there is an arbitrarily large copy of $U(r+1,k)$ in $G$, then $G$ contains a forbidden graph of $\P$.

\begin{lemma}\label{Urkfree}
Let $r,m \in \N$, and for each $v \in \{0,1\}^{r}$ choose a graph $H_v \in \HH(r,v)$ with $|V(H_v)| \le m$. Let $k = k(r,m) \in \N$ be sufficiently large, and let $G$ be any graph.

Then either $G$ is $U(r,k)$-free, or $H_v \le G$ for some $v \in \{0,1\}^r$.
\end{lemma}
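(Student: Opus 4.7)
The plan is to derive Lemma~\ref{Urkfree} in two steps: first apply Lemma~\ref{UrkRamsey} to pass from an uncoloured copy of $U(r,k)$ inside $G$ to a ``coloured'' copy $U^*_v(r, K_0)$ whose intra-part structure is fixed, and then embed the specific graph $H_v$ into that coloured copy via a greedy use of the shattering property. Concretely, suppose $G$ is not $U(r,k)$-free, so that there are disjoint sets $A_1, \ldots, A_r \subset V(G)$ with $G[A_1, \ldots, A_r] = U(r,k)$. Fix a threshold $K_0 = K_0(m)$ (to be chosen below) and set $k = K(r, K_0)$, where $K$ is the function from Lemma~\ref{UrkRamsey}. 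That lemma then produces an induced copy of $U^*_v(r, K_0)$ in $G$ for some $v \in \{0,1\}^r$, and it remains to embed $H_v$ inside this coloured copy.

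For the embedding, fix a partition $V(H_v) = B_1 \cup \cdots \cup B_r$ witnessing $H_v \in \HH(r,v)$, so that $H_v[B_j]$ is complete if $v_j = 1$ and empty otherwise, with $|B_j| \le m$. The intra-part adjacencies of $U^*_v(r,K_0)$ automatically match those of $H_v$ part by part, so only the bipartite edges between parts require attention. I would build an injective map $\phi : V(H_v) \to V(U^*_v(r, K_0))$ inductively on $j$: choose $\phi(B_1) \subset A_1$ of size $|B_1|$ arbitrarily, and at step $j+1$, having defined $\phi$ on $B_1 \cup \cdots \cup B_j$, assign each $b \in B_{j+1}$ to a vertex of $A_{j+1}$ whose neighbourhood in $\phi(B_1 \cup \cdots \cup B_j)$ matches that of $b$ in $H_v$, taking care to keep the choices distinct.

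The key point making the induction go through is that $A_{j+1} \to A_1 \cup \cdots \cup A_j$ inside $U^*_v(r,K_0)$, which means every one of the $2^{|B_1| + \cdots + |B_j|}$ possible neighbourhood patterns on $\phi(B_1 \cup \cdots \cup B_j)$ is realised by exactly $2^{(|A_1| - |B_1|) + \cdots + (|A_j| - |B_j|)}$ vertices of $A_{j+1}$. Even if several vertices of $B_{j+1}$ share the same pattern in $H_v$, we need at most $|B_{j+1}| \le m$ vertices of $A_{j+1}$ per pattern, and this is available as soon as $|A_1| - |B_1| \ge \log_2 m$ (the higher-indexed parts contribute for free, since $|A_j|$ with $j \ge 2$ is doubly exponentially larger than needed). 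Taking $K_0 = 2m$, say, is therefore ample. The only obstacle worth naming is the bookkeeping of the nested parameters $m$, $K_0$, $k$; the substantive combinatorial work has already been done inside Lemma~\ref{UrkRamsey}.
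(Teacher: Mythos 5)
Your argument is correct and follows exactly the paper's route: invoke Lemma~\ref{UrkRamsey} to extract an induced $U^*_v(r,\cdot)$ and then embed $H_v$ into it. The only difference is that you spell out the greedy pattern-matching embedding (with the $K_0=2m$ padding), which the paper compresses into the one-line assertion that $H_v \le U^*_v(r,m)$ whenever $|H_v| \le m$; your count of available vertices per neighbourhood pattern is the right justification for that assertion.
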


\begin{proof}
Suppose that $G$ contains a copy of $U(r,k)$. Then, by Lemma~\ref{UrkRamsey}, $G$ also contains an induced copy of $U^*_v(r,m)$ for some $v \in \{0,1\}^r$. But $H_v \le U^*_v(r,k)$ for every $H_v \in \HH(r,v)$ with $|H_v| \le m$, and so we are done.
\end{proof}

Before continuing, we note that we have already proved Theorem~\ref{structure} in the case $r = 1$.

\begin{cor}
Let $\P$ be a hereditary property of graphs with $\chi_c(\P) = 1$. Then there exists $k = k(\P) \in \N$ such that $G$ is $U(k)$-free for every $G \in \P$, and hence there exists $\eps > 0$ such that $$|\P_n| \; \le \; 2^{n^{2-\eps}}$$
for every sufficiently large $n \in \N$.
\end{cor}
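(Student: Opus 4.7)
The plan is to derive the corollary by assembling Lemma~\ref{Urkfree} (with $r = 2$) and Theorem~\ref{count}. The crucial preliminary observation is that the bipartite universal graph $U(k)$ of Section~\ref{resultsec} coincides with the two-partite generalized universal graph $U(2,k)$ of Section~\ref{lemmasec}: both consist of a $k$-set $A_1$ and a $2^k$-set $A_2$ with $A_2 \to A_1$, so that every subset of $A_1$ is realised exactly once as the neighbourhood of a vertex of $A_2$. Consequently ``$G$ is $U(k)$-free'' and ``$G$ does not contain $U(2,k)$ in the sense required by Lemma~\ref{Urkfree}'' are the same statement.

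First I would translate $\chi_c(\P) = 1$ into a finite list of forbidden induced subgraphs. Since $1$ is the maximum of the values of $r$ for which some $\HH(r,v) \subset \P$, we have $\HH(2,v) \not\subset \P$ for \emph{every} $v \in \{0,1\}^2$. For each such $v$ I would choose a witness $H_v \in \HH(2,v) \setminus \P$, and set $m := \max_{v \in \{0,1\}^2} |V(H_v)|$. Because $\P$ is hereditary, no graph containing any $H_v$ as an induced subgraph can lie in $\P$.

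Applying Lemma~\ref{Urkfree} with $r = 2$ and this finite family $\{H_v\}_{v \in \{0,1\}^2}$ then yields an integer $k = k(2,m) =: k(\P)$ such that every graph is either $U(2,k)$-free or contains some $H_v$ as an induced subgraph. Combined with the previous paragraph, this forces every $G \in \P$ to be $U(k)$-free, which is the first assertion of the corollary. Theorem~\ref{count} next supplies an $\eps = \eps(k) > 0$ for which there are at most $2^{n^{2-\eps}}$ distinct $U(k)$-free graphs on $[n]$; since $\P_n$ is a subfamily of this class, the claimed speed bound follows for every sufficiently large $n$.

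There is essentially no technical obstacle here, as the argument is a short assembly of two results already proved. The only point warranting care is that $\chi_c(\P) = 1$ must be used to produce a forbidden representative in \emph{each} of the four classes $\HH(2,v)$ simultaneously, since Lemma~\ref{Urkfree} requires such a witness for every $v \in \{0,1\}^r$ at once; without all four witnesses the conclusion that $\P$ avoids $U(k)$ would not be available.
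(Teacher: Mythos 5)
Your proposal is correct and follows exactly the paper's own argument: choose a forbidden $H_v \in \HH(2,v)\setminus\P$ for each $v\in\{0,1\}^2$ (possible since $\chi_c(\P)=1$), invoke Lemma~\ref{Urkfree} with $r=2$ (noting $U(2,k)$ is just $U(k)$) to conclude every $G\in\P$ is $U(k)$-free, and finish with Theorem~\ref{count}. Your explicit remark identifying $U(2,k)$ with $U(k)$ and your emphasis on needing all four witnesses simultaneously are points the paper leaves implicit, but the route is the same.
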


\begin{proof}
Since $\chi_c(\P) = 1$, there exists, for each $v \in \{0,1\}^2$, a graph $H_v \in \HH(2,v)$ such that $H_v \not\in \P$. By Lemma~\ref{Urkfree}, it follows that there exists $k \in \N$ such that $G$ is $U(k)$-free for every $G \in \P$. By Theorem~\ref{count}, it follows that
$$|\P_n| \; \le \; 2^{n^{2-\eps}}$$
for every sufficiently large $n \in \N$, as required.
\end{proof}

The following `induced' embedding lemma is a simple consequence of the Embedding Lemma, the Slicing Lemma and (the proof of) Lemma~\ref{Urkfree}.

\begin{lemma}\label{embed2}
Given $\delta > 0$ and $m,r \in \N$, there exist $\eps > 0$ and $n_0 = n_0(\eps,\delta,m,r) \in \N$ such that the following holds. Let $G$ be a graph on $A_1 \cup \ldots \cup A_r$, where $|A_j| \ge n_0$ for each $j \in [r]$, and suppose each pair $(A_i,A_j)$ is $(\eps,\delta)$-grey.

Then for some $v \in \{0,1\}^r$, $H \le G$ for every $H \in \HH(r,v)$ with $|H| \le m$.
\end{lemma}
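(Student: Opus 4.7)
My plan is to reduce the statement to Lemma~\ref{UrkRamsey} via the Embedding and Slicing Lemmas. The reduction rests on two simple observations. First, for any $v \in \{0, 1\}^r$, the graph $U^*_v(r, m)$ contains every $H \in \HH(r, v)$ with $|H| \le m$ as an induced subgraph: each class of $U^*_v(r, m)$ is already complete or empty matching $v$ (and hence matches the intra-class structure of $H$), and one embeds the classes $V_j$ of $H$ greedily using the shattering property $A_{j+1} \to A_1 \cup \ldots \cup A_j$, which leaves enough room per prescribed neighbourhood pattern because $|A_{j+1}| = 2^{\sum_{i \le j} |A_i|}$ dwarfs $m$. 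Second, Lemma~\ref{UrkRamsey} guarantees that any graph whose $r$-partite skeleton equals $U(r, K)$ for $K = K(r, m)$ already contains a copy of $U^*_v(r, m)$ for some $v$, regardless of intra-class edges.

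It therefore suffices to produce such a skeleton inside $G$. Let $K = K(r, m)$, set $N = |V(U(r, K))|$, and write $V(U(r, K)) = \{x_1, \ldots, x_N\}$ partitioned into classes $B_1, \ldots, B_r$; both $K$ and $N$ are constants depending only on $r$ and $m$. I would partition each $A_j$ of $G$ into $N$ equal sub-parts $A_j^{(1)}, \ldots, A_j^{(N)}$, and designate the first $|B_j|$ of them to host the vertices of $B_j$. Choosing $\eps$ small enough as a function of $\delta$, $r$, and $m$, the Slicing Lemma applied with $\alpha = 1/N$ ensures every inter-original-class designated sub-pair $(A_j^{(s)}, A_{j'}^{(s')})$ with $j \neq j'$ is $(\eps', \delta')$-grey for parameters satisfying the hypothesis of the Embedding Lemma for a target on $N$ vertices.

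I would then invoke the Embedding Lemma on the $N$ designated sub-parts with target graph $U(r, K)$, obtaining distinct vertices $u_1, \ldots, u_N \in V(G)$ (one per designated sub-part) such that for every $j \neq j'$ and every corresponding pair of sub-parts, $u_i u_{i'} \in E(G)$ if and only if $x_i x_{i'} \in E(U(r, K))$. Pairs of designated sub-parts inside the same $A_j$ are not grey and so remain unconstrained by the Embedding Lemma, but since $U(r, K)$ has no edges within a single class this is compatible. Hence the $r$-partite skeleton of $\{u_1, \ldots, u_N\}$ in $G$ coincides exactly with $U(r, K)$, and Lemma~\ref{UrkRamsey} now yields an induced copy of $U^*_v(r, m) \le G$ for some $v \in \{0,1\}^r$, which in turn contains every required $H$ by the first observation.

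The only real obstacle is bookkeeping of constants: $\eps$ must be chosen small enough that after slicing by $1/N$ the greyness parameters remain within the admissible range of the Embedding Lemma for a target on $N$ vertices (and density parameter comparable to $\delta$), and $n_0$ must be taken at least $N$ times the Embedding Lemma's own threshold so that each designated sub-part is large enough. Since $K$, $N$, and all invoked thresholds are constants once $r$, $m$, and $\delta$ are fixed, this calculation is routine and presents no substantive difficulty.
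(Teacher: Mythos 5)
Your proposal is correct and follows essentially the same route as the paper: slice each $A_j$ into constantly many sub-parts, use the Slicing Lemma to keep the cross-class sub-pairs grey, apply the Embedding Lemma to realize the $r$-partite skeleton of $U(r,K)$, and then invoke Lemma~\ref{UrkRamsey} to extract an induced $U^*_v(r,m)$, which contains every $H \in \HH(r,v)$ with $|H| \le m$. The paper phrases the last two steps as an appeal to (the proof of) Lemma~\ref{Urkfree}, but that is the same argument; your explicit justification of the final containment $H \le U^*_v(r,m)$, which the paper only asserts, is a welcome addition.
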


\begin{proof}
We claim that $G$ is not $U(r,k)$-free, where $k = k(r,m)$ is the constant in Lemma~\ref{Urkfree}, if $n_0$ is sufficiently large. Indeed, by the Slicing Lemma, if we partition each set $A_j$ into $t = |U(r,k)|$ almost equal sets $(A_j^{(1)},\ldots,A_j^{(t)})$, then each pair $(A^{(i)}_j,A^{(i')}_{j'})$ (with $j \neq j'$) is $(\eps',\delta')$-grey, where $\eps' = 2\eps/t$ and $\delta'  = \delta/t - \eps$. Therefore, by the Embedding Lemma, there exists a copy of $U(r,k)$ in $G$, as required.

But now, by the proof of Lemma~\ref{Urkfree}, $G$ also contains an induced copy of $U^*_v(r,m)$ for some $v \in \{0,1\}^r$, and so $H_v \le U^*_v(r,k)$ for every $H_v \in \HH(r,v)$ with $|H_v| \le m$, as required.
\end{proof}

We next prove our key lemma whose proof uses the so-called `sparsening' method.

\begin{lemma}\label{key}
For each $\alpha > 0$ and $t \in \N$, there exist $c = c_1(\alpha,t) \in \N$ and $\delta = \delta_1(c,\alpha,t) > 0$ such that the following holds. Let $G$ be a bipartite graph with parts $U$ and $V$, satisfying $|U| \ge c$, $|V| = n \in \N$ and
$$\big| \Gamma(u) \, \triangle \, \Gamma(u') \big| \; \ge \; \alpha n$$
for each $u,u' \in U$ with $u \neq u'$.

Then there exists a subset $U' \subset U$, with $|U'| = t$, and sets $T_1, \ldots, T_{2^t} \subset V$, with $|T_j| \ge \delta n$ for each $j \in [2^t]$, such that the following holds:
\begin{itemize}
\item[$(a)$] If $u,v \in T_j$ then $\Gamma(u) \cap U' = \Gamma(v) \cap U'$.
\item[$(b)$] If $W = \{w_1,\ldots,w_{2^t}\}$, where $w_j \in T_j$ for each $j \in [2^t]$, then $W \to U'$.
\end{itemize}
\end{lemma}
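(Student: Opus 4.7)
The plan is to prove the lemma by induction on $t$, combining a base case handled by pigeonhole with an inductive step that uses the sparsening method together with Sauer's Lemma and Lemma~\ref{backwards}.

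\textbf{Base case ($t=1$).} Any two $u, u' \in U$ with $|\Gamma(u)|, |\Gamma(u')| < \alpha n / 2$ would satisfy $|\Gamma(u) \triangle \Gamma(u')| < \alpha n$, violating the hypothesis; similarly at most one $u$ can satisfy $|\Gamma(u)| > (1 - \alpha/2)n$. So for $c \ge 3$ some $u_1 \in U$ is balanced, and we set $U' = \{u_1\}$, $T_1 = V \setminus \Gamma(u_1)$, $T_2 = \Gamma(u_1)$, $\delta = \alpha/2$.

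\textbf{Inductive step.} Assume the lemma for $t-1$ with constants $c_{t-1}, \delta_{t-1}$. Apply it to produce $U^* \subset U$ of size $t-1$ and atoms $T^*_1, \ldots, T^*_{2^{t-1}} \subset V$ of size at least $\delta_{t-1} n$. The task reduces to finding $u_t \in U \setminus U^*$ whose neighborhood bisects each $T^*_j$ into two parts of size at least $\beta |T^*_j|$, for some $\beta = \beta(t) > 0$; this would yield $2^t$ atoms of size at least $\beta \delta_{t-1} n$.

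To find such $u_t$, for $c$ sufficiently large, assign to each $u \in U \setminus U^*$ the profile $p(u)_j = \lfloor |\Gamma(u) \cap T^*_j|/(\beta |T^*_j|) \rfloor$ for $j \in [2^{t-1}]$. By pigeonhole on the at most $(\lceil 1/\beta \rceil + 1)^{2^{t-1}}$ possible profiles, some profile is popular. If it lies in the ``interior'' (every coordinate strictly between $0$ and $\lceil 1/\beta \rceil$) any representative serves as $u_t$. Otherwise some coordinate $j$ is extreme across the popular bin, so the $u$'s in it have $|\Gamma(u) \cap T^*_j|$ either uniformly below $\beta |T^*_j|$ or uniformly above $(1-\beta)|T^*_j|$; in either case their pairwise symmetric differences on $V \setminus T^*_j$ remain $\ge (\alpha - 2\beta)n$. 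We then sparsen $U$ down to this bin and pass to the reduced problem $(U_{\text{bin}}, V \setminus T^*_j)$, iterating the argument and using Lemma~\ref{backwards} whenever we need to flip between shattering of subsets of $V$ by $U$-neighborhoods and shattering of subsets of $U$ by $V$-neighborhoods.

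\textbf{Main obstacle.} The delicate step is reassembling the answer: ensuring that the vertices produced in the sparsened sub-problem, together with an additional vertex recovered via Lemma~\ref{backwards}, combine into a single $U' \subset U$ of size $t$ whose $2^t$ atoms all have size at least $\delta n$ in the \emph{whole} of $V$, not merely in $V \setminus T^*_j$. This is where the ``repeated reverse Sauer'' alluded to in the sketch enters, and it forces $c_1(\alpha,t)$ to be chosen large enough to absorb the pigeonhole losses at every one of the $t$ sparsening levels (so $c_1$ is roughly a tower in $t$ and $1/\alpha$), while the atom-size constant degrades as $\delta_1 \ge (\alpha\beta)^{O(t)}$.
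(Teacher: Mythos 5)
Your strategy is genuinely different from the paper's, and as written it has a real gap at its crux. The paper's proof is a single global argument: it randomly sparsens $V$ to a set $X$ of size roughly $(\log c)/\alpha$ on which all vertices of $U$ still have distinct traces, applies Sauer's Lemma to get $U^* \to X^*$ with $|X^*| \ge 2^t$, flips this with Lemma~\ref{backwards} to get a $2^t$-set $X_1$ shattering a $t$-set $U_1 \subset U$, repeats after deleting $X_1$ to produce $\Theta(\alpha n/2^t)$ disjoint witness sets $X_j$, and then pigeonholes on which $t$-subset $U'$ is shattered by at least $\delta n$ of them; the sets $T_k$ are assembled by grouping the elements of those $X_j$ according to their trace on $U'$. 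Your induction-on-$t$ scheme (split each atom with one new ``bisecting'' vertex, else sparsen) is a plausible alternative skeleton, and your branch (i) is fine: if $u_t$ splits every $T^*_j$ nontrivially, the $2^t$ refined atoms realize all subsets of $U^* \cup \{u_t\}$, so (a) and (b) hold.

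The gap is in branch (ii). The recursion there is to the \emph{same} value of $t$ on the smaller instance $(U_{\mathrm{bin}}, V \setminus T^*_j)$, so its depth is not bounded by $t$ --- your closing remark about ``every one of the $t$ sparsening levels'' conflates the induction on $t$ with this sparsening recursion, whose depth is a priori unbounded. To make it work you must prove termination, e.g.\ by noting that each sparsening deletes at least $\delta_{t-1}|V_{\mathrm{cur}}|$ vertices while the required symmetric difference only drops from $\alpha n$ to $(\alpha - 2m\beta)n$ after $m$ levels, so once $(1-\delta_{t-1})^m < \alpha - 2m\beta$ the sub-instance cannot contain two distinct vertices of $U_{\mathrm{bin}}$ and branch (i) must have fired earlier; this bounds $m$ by some $m_0(\alpha,\delta_{t-1})$, dictates $\beta \ll \alpha/m_0$, and only then yields $\delta_1 > 0$ and a finite $c_1$. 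None of this bookkeeping is in the proposal, and your ``Main obstacle'' paragraph concedes the key step rather than resolving it --- while also misidentifying the difficulty: a solution of the sub-problem on $V \setminus T^*_j$ is already a valid solution on $V$ (conditions (a) and (b) concern only neighbourhoods in $U'$), so no ``reassembly'' via Lemma~\ref{backwards} is needed; indeed Lemma~\ref{backwards} is never used concretely anywhere in your argument, which suggests the flip between the two directions of shattering --- the actual engine of the paper's proof --- is being invoked only nominally.
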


\begin{proof}
Assume that $|U| = c \ge \left( \frac{5 \log c}{\alpha} \right)^{2^t}$, and with foresight, let $p = \ds\frac{5 \log c}{\alpha n}$. We claim that there exists a subset $X \subset V$, with $|X| = pn$, such that
$$\Gamma(x) \cap X \; \neq \; \Gamma(y) \cap X$$
for each $x,y \in U$ with $x \neq y$. Indeed, if we choose a random subset $X \subset V$ of size $pn$, then
$$\Pr\Big( \Gamma(x) \cap X = \Gamma(y) \cap X \Big) \; \le \; {{n - \alpha n} \choose {pn}} \Big/ {n \choose {pn}} \; \le \; (1 - \alpha)^{p n}$$
for each such pair $\{x,y\}$, and so
$$\Ex \left| \left\{ \{x,y\} \in {U \choose 2} \,:\, \Gamma(x) \cap X = \Gamma(y) \cap X \right\} \right| \; \le \; {{|U|} \choose 2} (1-\alpha)^{pn} \; \le \; c^2 e^{-p\alpha n} \; < \; 1,$$
by our choice of $p$.

Thus such a set $X$ must exist, as claimed. Now, since $c$ is sufficiently large so that
$$c \; \ge \; |X|^{2^t} \; = \; (pn)^{2^t} \; = \; \left( \frac{5 \log c}{\alpha} \right)^{2^t}$$
then, by Sauer's Lemma, there exist sets $U^* \subset U$ and $X^* \subset X$, with $|X^*| \ge 2^t$, such that $U^* \to X^*$. Thus, by Lemma~\ref{backwards}, there exist sets $U_1 \subset U^*$ and $X_1 \subset X^*$, with $|U_1| = t$, such that $X_1 \to U_1$.

Now, let us remove $X_1$ from $V$, and repeat the process, obtaining disjoint sets $X_2, \ldots, X_\ell$. Since $|X_j| = 2^t$, we can do this so long as $\ell \le \ds\frac{\alpha n}{2^{t+1}}$. (It is easy to see that nothing goes wrong in the calculation above when we replace $\alpha$ by $\alpha/2$.) By the pigeonhole principle, there is a set $U' \subset U$ which occurs (as the set $U_j$) at least
$$\ell {{|U|} \choose t}^{-1} \; \ge \; \frac{\alpha n}{2^{t+1} c^t} \; \ge \; \delta n$$
times. Let $\J = \{j : X_j \to U'\}$, and note that $|\J| \ge \delta n$. Write $X_j = \{x_j(1),\ldots,x_j(2^t)\}$, where $\Gamma(x_j(k)) \cap U' = \Gamma(x_{j'}(k)) \cap U'$ for every $j,j' \in \J$, and let $T_k = \{x_j(k) : j \in \J\}$, for each $k \in [2^t]$. The sets $T_1, \ldots, T_{2^t}$ satisfy conditions $(a)$ and $(b)$, as required.
\end{proof}

The corresponding result for $r$-partite graphs follows as an easy corollary.

\begin{lemma}\label{key2}
For each $\alpha > 0$ and $r,t \in \N$, there exist $c = c_2(\alpha,r,t) \in \N$ and $\delta = \delta_2(c,\alpha,r,t) > 0$ such that the following holds. Let $G$ be a graph on $n \in \N$ vertices, let $(S_1,\ldots,S_r)$ be a partition of $V(G)$, and let $B \subset V(G)$ satisfy $|B| \ge c$ and
$$\left| \big( \Gamma(b) \cap S_j \big) \triangle \big( \Gamma(b') \cap S_j \big) \right| \; \ge \; \alpha n$$
for every $j \in [r]$, and each $b,b' \in B$ with $b \neq b'$.

Then there exists a subset $B' \subset B$, with $|B'| = t$, and sets $T^{(i)}_1, \ldots, T^{(i)}_{2^t} \subset S_i$ for each $i \in [r]$, with $|T^{(i)}_j| \ge \delta n$, such that the following holds:
\begin{itemize}
\item[$(a)$] If $u,v \in T^{(i)}_j$ then $\Gamma(u) \cap B' = \Gamma(v) \cap B'$.
\item[$(b)$] If $W = \{w_1,\ldots,w_{2^t}\}$, where $w_j \in T^{(1)}_j \cup \ldots \cup T^{(m)}_j$ for each $j \in [2^t]$, then $W \to B'$.
\end{itemize}
\end{lemma}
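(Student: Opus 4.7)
The plan is to apply Lemma~\ref{key} $r$ times in succession, one application per part $S_i$, each time producing a refined subset of $B$ to carry forward. Working backwards, set $t_r := t$ and recursively $t_{i-1} := c_1(\alpha, t_i)$, so that Lemma~\ref{key} applies at step $i$ on a set of size $t_{i-1}$ with parameter $t_i$. Taking $c_2 := t_0$ (a finite tower of $c_1$-iterations of height $r$) and $\delta_2 := \alpha \cdot \min_{i \in [r]} \delta_1(t_{i-1}, \alpha, t_i)$ will yield the required constants.

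At step $i$ I apply Lemma~\ref{key} to the bipartite graph $G[B^{(i-1)}, S_i]$, where $B^{(0)} := B$. The symmetric-difference hypothesis passes from $B$ to $B^{(i-1)}$, and since every symmetric difference lies inside $S_i$ we automatically have $|S_i| \ge \alpha n$, which absorbs the discrepancy between $n$ and $|S_i|$ in Lemma~\ref{key}'s normalisation (at the cost of the factor $\alpha$ in $\delta_2$). The lemma returns a subset $B^{(i)} \subseteq B^{(i-1)}$ of size $t_i$ together with $2^{t_i}$ sets $\widehat T^{(i)}_{j'} \subseteq S_i$, each of size at least $\delta_2 n$, such that all vertices of $\widehat T^{(i)}_{j'}$ share a common neighbourhood in $B^{(i)}$, and as $j'$ varies over $[2^{t_i}]$, these neighbourhoods realise every subset of $B^{(i)}$ exactly once.

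To finish, set $B' := B^{(r)}$ and fix an enumeration $N_1, \ldots, N_{2^t}$ of the subsets of $B'$. For each $i \in [r]$ and $j \in [2^t]$, choose $T^{(i)}_j$ to be any one of the sets $\widehat T^{(i)}_{j'}$ whose common neighbourhood in $B^{(i)}$ restricts to $N_j$ on $B' \subseteq B^{(i)}$; at least one (in fact $2^{t_i - t}$) such $j'$ exists. Condition (a) is then immediate, since vertices of $T^{(i)}_j$ share a common neighbourhood in $B^{(i)}$, hence a common restriction $N_j$ to $B'$. For (b), any transversal $W = \{w_1,\ldots,w_{2^t}\}$ with $w_j \in T^{(1)}_j \cup \cdots \cup T^{(r)}_j$ satisfies $\Gamma(w_j) \cap B' = N_j$ by (a); since $\{N_j\}_{j \in [2^t]}$ enumerates every subset of $B'$, the family $\{\Gamma(w_j) \cap B'\}$ shatters $B'$, giving $W \to B'$.

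There is no serious mathematical difficulty beyond bookkeeping. The only substantive facts being used are that a subset of $B$ inherits the pairwise symmetric-difference hypothesis, that the transversal conclusion of Lemma~\ref{key} is strong enough to survive restriction to $B^{(r)} \subseteq B^{(i)}$, and that for each subset of $B'$ the corresponding equivalence class of indices $j'$ is non-empty, making the reindexing cost-free.
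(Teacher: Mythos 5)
Your proposal is correct and follows essentially the same route as the paper: iterate Lemma~\ref{key} once per part $S_i$, feeding the shrinking subset of $B$ forward each time with a reverse-nested sequence of parameters $t_0 > t_1 > \cdots > t_r = t$, and then re-index the $2^{t_i}$ trace classes in each $S_i$ consistently against a fixed enumeration of the subsets of $B' = B^{(r)}$. Your treatment is if anything slightly more careful than the paper's, in that you explicitly account for the $|S_i|$-versus-$n$ normalisation (absorbing it into the factor $\alpha$ in $\delta_2$) and spell out why every subset of $B'$ is realised by some trace class.
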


\begin{proof}
Let $t_1 > t_2 > \dots > t_{r+1} = t$ be a sequence satisfying $t_j \ge c_1(\alpha,t_{j+1})$ for each $j \in [r]$ (where $c_1$ is the function in Lemma~\ref{key}), and assume that $|B| = c \ge t_1$. Applying Lemma~\ref{key} (with $t = t_1$) to the pair $(B,S_1) = (U,V)$, we obtain a set $B_1 \subset B$ and a collection of sets $T^{(1)}_1, \ldots, T^{(1)}_{2^{t_1}} \subset S_1$ given by that lemma. In particular, we have $|B_1| \ge t_2$ and $|T^{(i)}_j| \ge \delta n$, where $\delta = \delta_1(t_1,\alpha,t_2)$.

Similarly, for each $q \in [2,r]$ we may apply Lemma~\ref{key} (with $c = t_q$ and $t = t_{q+1}$) to the pair $(B_{q-1},S_q)$, to obtain sets $B_q \subset B_{q-1}$ and $T^{(q)}_1, \ldots, T^{(q)}_{2^{t_q}} \subset S_{q}$, with $|B_q| = t_{q+1}$ and $|T^{(q)}_j| \ge \delta n$, where $\delta = \delta_1(t_q,\alpha,t_{q+1})$. Let $\delta_2(c,\alpha,t) = \min_q \{ \delta_1(t_q,\alpha,t_{q+1})\}$.

Finally, for each $q \in [r]$, re-number so that the sets $T^{(q)}_1, \ldots, T^{(q)}_{2^t}$ shatter $B_r \subset B_q$. It follows that the sets $B' = B_r$ and $\{ T^{(i)}_j : i \in [r], j \in [2^t]\}$ are those required by the lemma, and so we are done.
\end{proof}

We shall in fact use the following immediate corollary of Lemmas~\ref{back2} and~\ref{key2}.

\begin{lemma}\label{key3}
For each $\alpha > 0$ and $r,t \in \N$, there exist $c = c_3(\alpha,r,t) \in \N$ and $\delta = \delta_3(c,\alpha,r,t) > 0$ such that the following holds. Let $G$ be a graph on $n \in \N$ vertices, let $(S_1,\ldots,S_r)$ be a partition of $V(G)$, and let $B \subset V(G)$ satisfy $|B| \ge c$ and
$$\left| \big( \Gamma(b) \cap S_j \big) \triangle \big( \Gamma(b') \cap S_j \big) \right| \; \ge \; \alpha n$$
for every $j \in [r]$, and each $b,b' \in B$ with $b \neq b'$.

Then there exists a subset $B' \subset B$, with $|B'| = 2^{rt}$, and sets $T^{(i)}_1, \ldots, T^{(i)}_t \subset S_i$ for each $i \in [r]$, with $|T^{(i)}_j| \ge \delta n$, such that the following holds:
\begin{itemize}
\item[$(a)$] If $u,v \in T^{(i)}_j$ then $\Gamma(u) \cap B' = \Gamma(v) \cap B'$.
\item[$(b)$] If $W = \{w_{11},\ldots,w_{rt}\}$, where $w_{ij} \in T^{(i)}_j$ for each $i \in [r]$, $j \in [t]$, then $B' \to W$.
\end{itemize}
\end{lemma}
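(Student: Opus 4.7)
\medskip

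\noindent\textit{Proof plan.} The plan is to deduce Lemma~\ref{key3} from Lemma~\ref{key2} by one application of Lemma~\ref{back2}; this is essentially just a matter of ``reversing'' the shattering direction that Lemma~\ref{key2} supplies. The only real bookkeeping issue is that Lemma~\ref{back2} needs the target set of size at least $2^{rt}$, so I will first enlarge the parameter fed into Lemma~\ref{key2}.

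Specifically, I would set $c_3(\alpha,r,t) := c_2(\alpha,r,2^{rt})$ and $\delta_3(c_3,\alpha,r,t) := \delta_2(c_3,\alpha,r,2^{rt})$, and then apply Lemma~\ref{key2} with its parameter $t$ replaced by $2^{rt}$. This produces a set $B^{*} \subset B$ with $|B^{*}| = 2^{rt}$ and a family of sets $\tilde T^{(i)}_1,\ldots,\tilde T^{(i)}_{2^{2^{rt}}} \subset S_i$ for each $i \in [r]$, each of size at least $\delta_3 n$, such that vertices inside any single $\tilde T^{(i)}_j$ agree on their neighbourhood in $B^{*}$, and such that any choice of one representative from each column shatters $B^{*}$. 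For each $i \in [r]$ independently, I then pick an arbitrary transversal $A_i = \{a_{ij} : j \in [2^{2^{rt}}]\}$ with $a_{ij} \in \tilde T^{(i)}_j$. Property $(b)$ of Lemma~\ref{key2} (applied with all $w_j$'s chosen from row $i$) gives $A_i \to B^{*}$ for every $i$, and the sets $A_1,\ldots,A_r$ are automatically disjoint since they live in different $S_i$'s.

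Now I apply Lemma~\ref{back2} to the disjoint sets $A_1,\ldots,A_r$ and $B^{*}$, which is legal because $|B^{*}| = 2^{rt}$. This yields subsets $A'_i \subset A_i$ of size $t$ (for each $i$) such that $B^{*} \to \bigcup_i A'_i$. Each $A'_i$ corresponds to some $t$-subset $J_i \subset [2^{2^{rt}}]$ of column indices. The final step is a cosmetic relabelling: for each $i \in [r]$ independently, fix a bijection $\pi_i : [t] \to J_i$ and define $T^{(i)}_j := \tilde T^{(i)}_{\pi_i(j)}$ for $j \in [t]$, and set $B' := B^{*}$. Condition $(a)$ of Lemma~\ref{key3} is then inherited directly from property $(a)$ of Lemma~\ref{key2}. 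For condition $(b)$, given any choice $w_{ij} \in T^{(i)}_j$, property $(a)$ says $w_{ij}$ has the same neighbourhood in $B'$ as $a_{i,\pi_i(j)} \in A'_i$, so $W$ and $\bigcup_i A'_i$ induce the same bipartite graph with $B'$; since $B^{*} \to \bigcup_i A'_i$, we conclude $B' \to W$.

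The one conceptual point worth flagging (and the nearest thing to an obstacle) is whether we are allowed to use a \emph{different} relabelling $\pi_i$ in each row. This is fine precisely because Lemma~\ref{key2} does not match the columns of $\tilde T^{(i)}_j$ across different $i$'s: the column index $j$ has no intrinsic meaning between rows, so relabelling independently in each row is harmless. Had the sets across rows been matched (as inside a single application of Lemma~\ref{back2}), this would actually \emph{prevent} us from using the same columns across rows, but here the independence is exactly what makes the reorganisation possible.
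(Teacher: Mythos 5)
Your proof is correct and follows essentially the same route as the paper's own (one‑line) proof: apply Lemma~\ref{key2} with its parameter $t$ replaced by $2^{rt}$, then feed transversals of the resulting columns into Lemma~\ref{back2} — indeed the paper explicitly offers the ``arbitrarily chosen element from each set'' variant you use. Your closing remark about the harmlessness of relabelling columns independently in each row is a correct and worthwhile clarification of a point the paper leaves implicit.
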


\begin{proof}
First we apply Lemma~\ref{key2} to get sets $U^{(i)}_j$ (for each $i \in [r]$ and $j \in [2^{|B'|}]$) which shatter $B'$. Applying Lemma~\ref{back2} to these sets (or, if the reader prefers, to an arbitrarily chosen element from each set) gives the required sets.
\end{proof}

The following two observations will be useful in Section~\ref{proofsec}.

\begin{obs}\label{nongrey}
Let $\delta > 0$ be sufficiently small, and let $|A| = |B| = n$. There are at most $2^{\delta n^2}$ bipartite graphs on $A \cup B$ of density at most $\delta^2$.
\end{obs}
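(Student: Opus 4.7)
The plan is to bound the number of such graphs by the number of subsets of $A \times B$ of size at most $\delta^2 n^2$, and then apply a standard binomial (entropy) estimate. A bipartite graph on $A \cup B$ of density at most $\delta^2$ has at most $\delta^2 n^2$ edges, so the number of such graphs is at most
$$\sum_{k = 0}^{\lfloor \delta^2 n^2 \rfloor} \binom{n^2}{k}.$$

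First, since $\delta$ is sufficiently small, $\delta^2 n^2 < n^2/2$, so the binomial coefficients $\binom{n^2}{k}$ are monotone increasing over the range of the sum, and hence the sum is at most
$$(\delta^2 n^2 + 1) \binom{n^2}{\lfloor \delta^2 n^2 \rfloor} \; \le \; (n^2+1) \left( \frac{e}{\delta^2} \right)^{\delta^2 n^2},$$
where I have used the standard estimate $\binom{N}{k} \le (eN/k)^k$ with $N = n^2$, $k = \delta^2 n^2$.

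Taking logarithms base $2$, the bound becomes
$$\log_2(n^2 + 1) \, + \, \delta^2 n^2 \big( \log_2 e \, + \, 2\log_2(1/\delta) \big).$$
The coefficient $\delta^2 \big( \log_2 e + 2\log_2(1/\delta) \big)$ of $n^2$ tends to $0$ as $\delta \to 0$, so for all sufficiently small $\delta$ it is at most $\delta/2$; the polynomial prefactor $\log_2(n^2+1)$ is then absorbed into the remaining $\delta n^2/2$ for $n$ larger than some constant depending on $\delta$ (and for $n$ below that constant the claim is trivial, as the number of graphs on $A \cup B$ is at most $2^{n^2}$, which is at most $2^{\delta n^2}$ vacuously once we note that only constantly many values of $n$ need separate treatment).

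There is no real obstacle here; the only step that requires any thought is verifying that $\delta^2 \log_2(1/\delta^2) \to 0$ as $\delta \to 0$, which is immediate.
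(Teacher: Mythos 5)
Your argument is correct and is essentially the paper's own proof: both bound the count by $\sum_{j \le \delta^2 n^2} \binom{n^2}{j}$, estimate the sum by a polynomial multiple of its largest term, apply $\binom{N}{k} \le (eN/k)^k$, and observe that $\delta^2 \log(1/\delta^2) \to 0$ as $\delta \to 0$. The only (inessential) difference is your cruder $(\delta^2 n^2 + 1)$ prefactor in place of the paper's factor $2$, which forces your slightly awkward aside about small $n$; otherwise the two proofs coincide.
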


\begin{proof}
If $G$ is such a graph then $e(G) \le m = \delta^2 n^2$, so the number of choices is at most
$$\sum_{j=0}^m {{n^2} \choose j} \; \le \; 2{{n^2} \choose m} \; \le \; 3^m \left( \frac{1}{\delta^2} \right)^{\delta^2 n^2} \; < \; 2^{\delta n^2},$$
as required, since $\left( 1/x^2 \right)^x \to 1$ as $x \to 0$.
\end{proof}

Recall that $K_r(t)$ denotes the Tur\'an graph on $rt$ vertices, i.e., the complete $r$-partite graph with $t$ vertices in each part.

\begin{obs}\label{findKtt}
For each  $r,t \in \N$, there exist $\eps > 0$ and $n_0 = n_0(r,t) \in \N$ such that the following holds. Let $G$ be an $r$-partite graph on vertex set $A_1 \cup \ldots \cup A_r$ and, for each $j \in [r]$, let $B_j(1) \cup \ldots \cup B_j(t)$ be an equipartition of $A_j$. Suppose $n \ge n_0$, $|A_1| = \ldots = |A_r| = n$, and $e(G) \ge (1 - \eps){r \choose 2}n^2$.

Then there exists a copy $H$ of $K_r(t)$ in $G$ with $|H \cap B_j(k)| \le 1$ for each $j,k$.
\end{obs}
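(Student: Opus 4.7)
The plan is to prove this by induction on $r$. The base case $r = 1$ is immediate because $K_1(t)$ is the edgeless graph on $t$ vertices, so we simply pick one vertex from each of the $t$ sub-blocks $B_1(k)$, which is possible provided $n \ge t$.

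For the induction step, assume the statement holds for $r-1$ with corresponding constants $\eps'(r-1,t)$ and $n_0(r-1,t)$. The idea is to first pick the $t$ vertices of the intended $K_r(t)$ inside $A_1$, and then find the remaining $(r-1)t$ vertices in their common neighborhood by applying the induction hypothesis to an $(r-1)$-partite graph. To pick the vertices in $A_1$, call $v \in A_1$ \emph{bad} if it has more than $\sqrt{\eps}\,n$ non-neighbors in $A_2 \cup \cdots \cup A_r$. Since the total number of non-edges in $G$ is at most $\eps{r \choose 2}n^2$, at most $\sqrt{\eps}\,{r \choose 2}\,n$ vertices are bad, which is less than $n/t$ for $\eps$ sufficiently small; hence each sub-block $B_1(k)$ contains a non-bad representative $v_{1,k}$. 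Each such $v_{1,k}$ has at most $\sqrt{\eps}\,n$ non-neighbors in any $B_j(k')$ with $j \ge 2$, so the common neighborhood $\bigcap_k \Gamma(v_{1,k}) \cap B_j(k')$ has size at least $n/t - t\sqrt{\eps}\,n \ge n/(2t)$ (again for $\eps$ small). Trimming each such set to a common size $n' = \lfloor n/(2t) \rfloor$ produces sub-blocks $B_j''(k')$ and parts $A_j'' = \bigcup_{k'} B_j''(k')$ of size $tn'$.

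The resulting $(r-1)$-partite graph $G''$ has at most $\eps{r \choose 2}n^2$ non-edges (vertex deletion can only decrease the non-edge count), hence density at least $1 - C(r,t)\eps$ for some constant $C(r,t)$. Choosing $\eps$ small enough that $C(r,t)\eps \le \eps'(r-1,t)$, and $n$ large enough that $tn' \ge n_0(r-1,t)$, the induction hypothesis applies to $G''$ and yields a copy of $K_{r-1}(t)$ with one vertex per sub-block $B_j''(k')$; appending $v_{1,1}, \ldots, v_{1,t}$ produces the desired $K_r(t)$. The main (modest) obstacle is the parameter bookkeeping: one needs to set $\eps(r,t)$ and $n_0(r,t)$ recursively so that the smallness and largeness conditions required at all levels of the induction are simultaneously met, but since $r$ is fixed this is entirely routine.
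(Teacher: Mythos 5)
Your proof is correct and follows essentially the same route as the paper's: the paper disposes of this observation with a one-line greedy/averaging argument (choose in each sub-block a vertex with small non-degree into every other sub-block and restrict to common neighbourhoods), and your induction on $r$ is just a carefully organised version of that same greedy step. The parameter bookkeeping you defer is indeed routine, so nothing is missing.
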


\begin{proof}
Since $\eps$ may be chosen so that $\eps r^3 t^3 \ll 1$, the result is trivial by the greedy algorithm. To spell it out, for each $i,j$ there exists a vertex $v \in B^{(i)}_j$ such that $\Gamma(v)$ avoids at most $|B^{(i')}_{j'}|/rt$ vertices of each other set $B^{(i')}_{j'}$.
\end{proof}

Finally, we prove the following easy lemma, which bounds the number of ways in which a copy of $U(r,k)$ in $G$ can be attached to the rest of the graph without creating a copy of $U(r+1,k)$.

\begin{lemma}\label{noUk}
For each $r,k \in \N$, there exists $K = K(r,k) \in \N$ and $\delta = \delta(r,k) > 0$ such that the following holds. Given a vertex set $A \cup B$, with $|A| = |U(r,k)|$ and $|B| = n$, let
$$\G(r,k,n) \; := \; \Big\{ G[A,B] \,:\, \exists \textup{ a $U(r+1,k)$-free graph $G$ on $A \cup B$ with $G[A] = U(r,k)$} \Big\},$$
the set of bipartite graphs on $A \cup B$ which do not create a copy of $U(r+1,k)$. Then
$$|\G(r,k,n)| \; \le \; 2^{|U(r,k)|n - \delta n}.$$
\end{lemma}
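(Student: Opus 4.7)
The plan is to combine Sauer's Lemma with the rich shattering structure of $G[A] = U(r,k)$ to show that admissible bipartite graphs $H = G[A,B]$ have bounded complexity, and then to count such $H$ directly.

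For each $v \in B$ set $\tau(v) = N_H(v) \cap A$, and consider the trace family $\mathcal{F}_H = \{\tau(v) : v \in B\} \subseteq 2^{A}$. The central claim I would first establish is: if $\mathcal{F}_H$ shatters a suitably structured subset $X \subseteq A$ --- with $X$ itself forming an induced copy of $U(r,k)$ inside $G[A]$ --- then $G$ must contain an induced copy of $U(r+1,k)$, no matter how $G[B]$ is chosen, so that $H \notin \G(r,k,n)$. The reason is that shattering provides a large supply of candidate vertices of $B$ realising every neighbourhood pattern on $X$; a Ramsey / independent-transversal argument inside $G[B]$ then extracts $2^{|X|}$ such candidates forming an independent set, which, glued to $X$, yields the required copy of $U(r+1,k)$. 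The constant $K=K(r,k)$ enters by guaranteeing (via Lemma~\ref{UrkRamsey}) that $X$ can be taken large enough so that even a very dense $G[B]$ --- which would defeat the naive transversal --- itself furnishes an alternate embedding of $U(r+1,k)$ partially inside $B$.

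Granted this claim, the counting is clean. If $H \in \G(r,k,n)$, then $\mathcal{F}_H$ shatters no such structured $X$, so (applying Sauer's Lemma to the relevant restricted family) $|\mathcal{F}_H| \leq C|A|^d$ for constants $C = C(r,k)$ and $d = d(r,k)$. I would then specify $H$ in two stages: first enumerate the $\leq C|A|^d$ distinct traces occurring in $\mathcal{F}_H$ (cost $2^{O_{r,k}(1)}$, a constant in $n$), and second assign each of the $n$ vertices of $B$ a trace (cost $|\mathcal{F}_H|^n \leq 2^{O(n\log|A|)}$). Combining,
$$|\G(r,k,n)| \; \le \; 2^{O_{r,k}(n\log|A|) + O_{r,k}(1)},$$
which is at most $2^{|U(r,k)|n - \delta n}$ for some $\delta = \delta(r,k)>0$ once $K$ is chosen so that $|A|$ dwarfs the quantity $d\log|A|$.

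The main technical obstacle will be the shattering-forces-$U(r+1,k)$ implication, because of the adversary's freedom in choosing $G[B]$. If $G[B]$ is dense, large independent sets of $B$-vertices disappear and the straightforward transversal breaks. The remedy is a dichotomy: either $G[B]$ is sparse enough that a Haxell-type independent-transversal criterion applies directly to the prescribed $B_S = \{v \in B : \tau(v) = S\}$, or $G[B]$ is dense, in which case Ramsey inside $G[B]$ produces a large clique, and Lemma~\ref{UrkRamsey} applied to this clique together with the shattering on $X$ produces the copy of $U(r+1,k)$ via an alternate embedding using parts partially inside $B$. Balancing these two regimes, and ensuring the extracted structure is genuinely induced, is the delicate core of the argument.
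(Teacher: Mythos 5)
Your proposal starts from the right object --- the trace $\Gamma(v)\cap A$ of each $v\in B$ and the constraint that $U(r+1,k)$-freeness places on the trace family --- but both of the steps you build on top of it contain genuine gaps, and the lemma is in fact far simpler than your plan suggests. The ``delicate core'' you identify does not exist: in this paper a copy of $U(r+1,k)$ is witnessed purely by the $(r+1)$-partite cross structure, i.e.\ by disjoint sets $A_1,\ldots,A_{r+1}$ with $A_{j+1}\to A_1\cup\cdots\cup A_j$ for all $j$; the edges inside the parts are irrelevant (the version with prescribed interiors is the separate graph $U^*_v$, which is not what this lemma concerns). Hence if the trace family shattered $A$, then $2^{|A|}$ vertices of $B$ realising all traces would already give $A_{r+1}\to A_1\cup\cdots\cup A_r$ and a copy of $U(r+1,k)$, \emph{regardless of} $G[B]$. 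The Haxell/Ramsey dichotomy on the density of $G[B]$ is therefore unnecessary (and the independent-transversal half would be hard to justify anyway, since you have no control over the sizes of the classes $B_S$). Relatedly, since $|A|=|U(r,k)|$ exactly, the only subset $X\subseteq A$ inducing a copy of $U(r,k)$ is $A$ itself, so there is no room for $K$ to play the role you assign it; $K$ appears in the statement but plays no role in the conclusion.

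The counting step is also incorrect as written. Ruling out the shattering of the single structured set $A$ does not bound the VC dimension of $\mathcal{F}_H$ by any $d=d(r,k)$ smaller than $|A|-1$: proper subsets of $A$ may well be shattered without creating $U(r+1,k)$. So the polynomial bound $|\mathcal{F}_H|\le C|A|^d$ is unavailable, and Sauer's Lemma with $d=|A|-1$ returns only $\sum_{i=0}^{|A|-1}\binom{|A|}{i}=2^{|A|}-1$. Fortunately that is all you need, and you do not even need Sauer to see it: since $B$ does not shatter $A$, some $Y\subseteq A$ is realised as $\Gamma(u)\cap A$ by no $u\in B$, so each of the $n$ vertices of $B$ has at most $2^{|A|}-1$ admissible neighbourhoods in $A$, whence $|\G(r,k,n)|\le\left(2^{|A|}-1\right)^n\le 2^{|A|n-\delta n}$ with $\delta=2^{-|U(r,k)|}$. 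This one-line argument is exactly the paper's proof.
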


\begin{proof}
Since $G$ is $U(r+1,k)$-free, no set $X \subset B$ shatters $A$. This mean that there exists a subset $Y \subset A$ such that $\Gamma(u) \cap A \neq Y$ for every $u \in B$. We therefore have at most
$$\left( 2^{|A|} - 1 \right)^n \; \le \; 2^{|U(r,k)|n - \delta n}$$
choices for the edges of $G[A,B]$, where $\delta = 2^{-|U(r,k)|}$, as required.
\end{proof}

\section{Proof of Theorem~\ref{structure}}\label{proofsec}

In this section we shall describe several `bad' properties of a graph, and prove that the number of graphs in a hereditary property of graphs $\P$ with one of these properties is $o(|\P_n|)$. We then deduce Theorem~\ref{structure} by observing that all remaining graphs have the required structure.

We begin with an important definition, motivated by~\cite{BBS1}.

\begin{defn}
Let $\eps, \delta, \gamma > 0$, let $r \in \N$, let $G$ be a graph, and let $V(G) = S_1 \cup \ldots \cup S_r$ be a partition of $V(G)$. We say that $P = (S_1, \ldots, S_r)$ is a \emph{BBS-partition} of $G$ (for $(\eps,\delta,\gamma)$) if there exists a Szemer\'edi partition of $G$ (for $\eps$) into $m$ parts (for some $1/\eps < m \in \N$) such that:
\begin{itemize}
\item[$(a)$] Each part $S_j$ is a union of (an almost equal number of) Szemer\'edi sets.
\item[$(b)$] Each part $S_j$ contains at most $\gamma m^2$ pairs which are $(\eps,\delta)$-grey.
\end{itemize}
\end{defn}

Note that if $(S_1,\ldots,S_r)$ is a BBS-partition of $G$ for $(\eps,\delta,\gamma)$, then it follows that
$$\left(\frac{1}{r} - \eps \right) n \; \le \; |S_j| \; \le \; \left( \frac{1}{r} + \eps \right) n$$
for each $j \in [r]$, by condition $(a)$.

The following lemma was proved (in the monotone case) by Balogh, Bollob\'as and Simonovits~\cite{BBS1} using the Szemer\'edi Regularity Lemma, the Erd\H{o}s-Simonovits Stability Theorem, and the Embedding Lemma. The proof is essentially the same in our case, but for the sake of completeness we shall give a fairly complete sketch.

\begin{lemma}[Balogh, Bollob\'as and Simonovits~\cite{BBS1}]\label{S1S2}
Let $r \in \N$, let $\gamma > 0$, and let $\delta = \delta(\gamma,r) > 0$ and $\eps = \eps(\delta,\gamma,r) > 0$ be sufficiently small. Let $\P$ be a hereditary property of graphs with $\chi_c(\P) = r$.

For almost every graph $G \in \P$, there exists a BBS-partition of $G$ for $(\eps,\delta,\gamma)$ into $r$ parts.
\end{lemma}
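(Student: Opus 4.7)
My plan follows the Balogh--Bollob\'as--Simonovits strategy from~\cite{BBS1}, adapted to the hereditary setting. Fix $G \in \P_n$ and apply Szemer\'edi's Regularity Lemma with parameter $\eps$ and lower bound $m_0 = \lceil 1/\eps \rceil$, obtaining a Szemer\'edi partition of $V(G)$ into $m$ parts $V_1, \ldots, V_m$, where $1/\eps < m \le M(\eps)$. Form the \emph{cluster graph} $R$ on vertex set $[m]$ by joining $i$ to $j$ precisely when the pair $(V_i,V_j)$ is $(\eps,\delta)$-grey. All subsequent reasoning will be driven by the structure of $R$.

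The first step is to show that $R$ contains no $K_{r+1}$. Since $\chi_c(\P) = r$, for every $v \in \{0,1\}^{r+1}$ there is a graph $H_v \in \HH(r+1,v) \setminus \P$, and we may fix a common upper bound $M_0 = M_0(\P)$ on $|V(H_v)|$. If $R$ contained a $K_{r+1}$ on Szemer\'edi sets $V_{i_1},\ldots,V_{i_{r+1}}$, then, having chosen $\eps$ sufficiently small relative to $\delta, M_0, r$, Lemma~\ref{embed2} would give an induced copy in $G$ of every $H \in \HH(r+1,v)$ of order at most $M_0$ for some $v \in \{0,1\}^{r+1}$; in particular $H_v \le G$, contradicting $G \in \P$ by the hereditary property.

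The second step is to split according to the density of grey pairs. Let $\gamma' = \gamma'(\gamma,r)$ be the ES-stability constant associated to the error $\gamma$. If $e(R) \ge t_r(m) - \gamma' m^2$, then the Erd\H{o}s--Simonovits Stability Theorem yields a partition $[m] = P_1 \cup \cdots \cup P_r$, obtained as the colour classes of the Tur\'an graph $T_r(m)$ that agrees with $R$ after switching at most $\gamma m^2$ edges. The parts $P_j$ have sizes in $\{\lfloor m/r \rfloor, \lceil m/r \rceil\}$, and the edges of $R$ within any single $P_j$ are precisely some subset of the $\le \gamma m^2$ switched edges. Setting $S_j = \bigcup_{i \in P_j} V_i$ therefore produces a BBS-partition of $G$ for $(\eps,\delta,\gamma)$: each $S_j$ is a union of an almost equal number of Szemer\'edi sets, and contains at most $\gamma m^2$ grey pairs.

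The main technical obstacle is to show that only $o(|\P_n|)$ graphs $G \in \P_n$ fall into the atypical case $e(R) < t_r(m) - \gamma' m^2$. A graph in this case is specified by: (i) the Szemer\'edi partition, $2^{O(n \log m)} = 2^{o(n^2)}$ choices; (ii) the cluster graph $R$ together with densities rounded to multiples of $\eps$, only $2^{O(m^2)} = 2^{O(1)}$ choices; (iii) the edges inside non-grey regular pairs, at most $2^{O(\delta)\, n^2}$ choices after complementing dense pairs, by Observation~\ref{nongrey}; (iv) the edges inside the $\eps$-irregular pairs, at most $2^{\eps n^2}$ choices; and (v) the edges inside the at most $t_r(m) - \gamma' m^2$ grey pairs, at most $2^{e(R)(n/m+1)^2}$ choices. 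Summing,
$$\#\{G \text{ atypical}\} \; \le \; 2^{((1 - 1/r)/2 - \gamma') n^2 + O(\delta + \eps)n^2 + o(n^2)}.$$
Choosing $\delta$ sufficiently small in terms of $\gamma'$ and then $\eps$ sufficiently small in terms of $\delta$, this is at most $2^{(1-1/r)n^2/2 - \gamma' n^2/2}$, which by Observation~\ref{ABT} is $o(|\P_n|)$. Hence almost every $G \in \P$ admits a BBS-partition, completing the proof.
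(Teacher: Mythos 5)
Your proposal is correct and follows essentially the same route as the paper: form the cluster graph, rule out $K_{r+1}$ via Lemma~\ref{embed2} and the forbidden graphs $H_v \in \HH(r+1,v)\setminus\P$, count away the graphs whose cluster graph is sparse using Observation~\ref{nongrey} and Observation~\ref{ABT}, and apply Erd\H{o}s--Simonovits stability in the remaining case to read off the BBS-partition. The only (harmless) omission is that your enumeration in the atypical case does not explicitly list the edges inside the individual Szemer\'edi sets, but these contribute at most $2^{\eps n^2}$ and are absorbed by your $O(\delta+\eps)n^2$ error term.
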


\begin{proof}
First, since $\chi_c(\P) = r$, it follows that for each $v \in \{0,1\}^{r+1}$ there exists a `forbidden' graph $H_v \in \HH(r+1,v)$, such that $H_v \not\in \P$. Choose such a graph for each $v \in \{0,1\}^{r+1}$, and let $t = \max\{|V(H_v)| : v \in \{0,1\}^{r+1}\}$.

Let $G \in \P_n$, with $n \in \N$ sufficiently large, and apply the Szemer\'edi Regularity Lemma (for $\eps$ and $m = 1/\eps$) to the graph $G$. We obtain a collection of $k$ parts, $B_1, \ldots, B_k$; define a graph $H$ on $k$ vertices by letting $ij \in E(H)$ if and only if the pair $(B_i,B_j)$ is $(\eps,\delta^2)$-grey. (This is called the cluster graph of $G$.)

Suppose first that $K_{r+1} \subset H$. Then, applying Lemma~\ref{embed2}, we deduce that $H \le G$ for every $H \in \HH(r+1,v)$ with $|H| \le t$, for some $v \in \{0,1\}^{r+1}$. Therefore $H_v \in \P$, which is a contradiction.

Suppose next that the number of edges in the cluster graph $H$ satisfies
$$e(H) \; \le \; \left( 1 - \frac{1}{r} - 2\delta \right) {k \choose 2}.$$
It is easy to bound the number of graphs $G$ with at most this many edges. Indeed, there are at most $n^n$ ways of choosing the Szemer\'edi partition, and, by Observation~\ref{nongrey} and our choice of $m$, at most $2^{(\eps + \delta)n^2}$ ways of choosing the edges inside the parts, and between non-grey pairs. Moreover, there are at most $2^{(1 - 1/r - 2\delta)n^2/2}$ ways of choosing the edges between grey pairs. But
$$|\P_n| \; \ge \; 2^{(1 - 1/r + o(1))n^2/2},$$
by Observation~\ref{ABT}, so the number of such graphs $G$ on $n$ vertices is $o(|\P_n|)$.

Hence we may assume that $H$ is $K_{r+1}$-free, and has at least $(1 - 1/r - 2\delta){k \choose 2}$ edges. By the Erd\H{o}s-Simonovits Stability Theorem, it follows that we can change $H$ into the Tur\'an graph $T_r(k)$ by changing at most $\gamma k^2$ edges. But this is exactly the definition of a BBS-partition, and so we are done.
\end{proof}

Next, we need to count those graphs which have large `irregularities' between pairs $(S_i,S_j)$ of their BBS-partition. The following definition is designed to allow us to take advantage of the $\delta n$-sets given by Lemma~\ref{key3}.

Let $\eps, \delta, \gamma, \alpha > 0$, let $n \in \N$, and let $\P$ be a hereditary property of graphs. To simplify the notation in what follows, we shall suppress dependence on $\eps$, $\delta$ and $\gamma$. Define a set $\A(\P_n,\alpha) \subset \P_n$ as follows:
\begin{align*}
& \A(\P_n,\alpha) \; := \; \big\{ G \in \P_n \,:\, \exists\textup{ a BBS-partition $(S_1,\ldots,S_r)$ of $G$ for $(\eps,\delta,\gamma)$, and sets $X \subset S_i$} \\
& \hspace{1.5cm} \textup{ and $Y \subset S_j$, for some $i \neq j$, with $|X|, |Y| \ge \alpha n$, such that } d(X,Y) \not\in (\delta,1-\delta) \big\}.
\end{align*}
The following lemma says that the collection $\A(\P_n,\alpha)$ is small.

\begin{lemma}\label{countA}
Let $\alpha > 0$, let $2 \le r \in \N$, and let $\eps > 0$, $\delta > 0$ and $\gamma > 0$ be sufficiently small. Let $\P$ be a hereditary property of graphs with $\chi_c(\P) = r$, and let $n \in \N$. Then
$$|\A(\P_n,\alpha)| \; \le \; 2^{(1 - 1/r) n^2/2 \, - \, \alpha^2 n^2/3}  \; = \; o( |\P_n| ).$$
\end{lemma}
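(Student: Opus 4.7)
The plan is to run a union bound over all possible BBS-partitions $(S_1,\ldots,S_r)$ and all possible choices of bad pair $(X,Y)$, and for each such shape to bound the number of graphs $G$ consistent with it by counting the edges of $G$ piece by piece. The decisive saving will come from the entropy loss imposed by the forced density of $G[X,Y]$. First I would collect the combinatorial overhead: at most $r^n$ choices for the partition, $\binom{r}{2}$ for $\{i,j\}$, $4^n$ for $(X,Y)$, and $2$ for the direction (sparse or dense), contributing $2^{O(n)}$ in total.

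Next I would count the bipartite edges between different classes $S_k$. For the bad pair, a standard binomial-sum estimate gives at most $2^{H(\delta)|X||Y|}$ bipartite graphs on $(X,Y)$ with density outside $(\delta,1-\delta)$; the remaining between-pair edges are counted trivially by $2^{|S_k||S_l|}$. Using $|X|,|Y|\ge\alpha n$ together with $\sum_{k<l}|S_k||S_l|\le (1-1/r+O(\eps))n^2/2$ (the parts are near-balanced by the BBS definition), the total bipartite exponent is at most
\[
(1-1/r)n^2/2 \;-\; \bigl(1-H(\delta)\bigr)\alpha^2 n^2 \;+\; O(\eps n^2).
\]
For the intra-class edges $G[S_k]$, I would use the BBS structure inside each class: of the $\binom{m/r}{2}$ pairs of Szemer\'edi cells contained in $S_k$, at most $\gamma m^2$ are grey and at most $\eps m^2$ are irregular (each contributing the trivial $(n/m)^2$ bits), while every remaining pair is regular, non-grey, and hence of density $\le\delta$ or $\ge 1-\delta$, contributing only $H(\delta)(n/m)^2$ bits. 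Summed over $k$ and including $O(n^2/m)$ bits for edges inside cells, the total intra-class contribution is $O\bigl((\gamma+\eps)rn^2 + H(\delta)n^2/r\bigr)$.

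Combining everything, the exponent in the count is at most
\[
(1-1/r)n^2/2 \;-\; \bigl(1-H(\delta)\bigr)\alpha^2 n^2 \;+\; O\bigl((\gamma+\eps+H(\delta))n^2\bigr),
\]
which drops below $(1-1/r)n^2/2 - \alpha^2 n^2/3$ as soon as $\gamma,\delta,\eps$ are chosen small enough in terms of $\alpha$. The $o(|\P_n|)$ conclusion then follows immediately from $|\P_n|\ge 2^{(1-1/r+o(1))n^2/2}$ (Observation~\ref{ABT}). The only point that really needs attention is parametric: making sure that the slack $(1-H(\delta))\alpha^2$ from the biased bipartite pair genuinely dominates the combined error from grey pairs, irregular pairs, and size imbalance in the Szemer\'edi partition. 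The hierarchy $\eps\ll\delta\ll\gamma\ll\alpha$ allowed by the lemma statement is exactly what makes this go through, and any overcounting from graphs with several BBS-partitions or several bad pairs is absorbed by the union bound.
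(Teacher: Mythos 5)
Your proposal is correct and follows essentially the same route as the paper: a union bound over partitions and bad pairs, trivial counting of the cross-class edges except on $X\times Y$, where the forced extreme density yields an entropy saving of order $\alpha^2 n^2$, and a small intra-class contribution coming from the BBS structure (few grey pairs, the rest regular and near-trivial density). The only cosmetic difference is that you quantify the density savings via the binary entropy function where the paper invokes its Observation~\ref{nongrey}; the parametric bookkeeping you flag at the end is handled identically in both arguments.
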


\begin{proof}
For the first inequality we simply count. We have at most $n^n$ choices for the BBS-partition $S_1 \cup \ldots \cup S_r$, and the sets $X \subset S_i$ and $Y \subset S_j$. By Observation~\ref{nongrey}, and the definition of a BBS-partition, we have at most
$$2^{(\eps + \sqrt{\delta} + \gamma)n^2}$$ choices for the edges inside the set $S_k$, for each $k \in [r]$.

Next, recall that $(1/r - \eps)n \le |S_\ell| \le (1/r + \eps)n$ for each $\ell \in [r]$, by the definition of a BBS-partition, and so we have at most
$$2^{(1/r + \eps)^2 n^2}$$
choices for the edges between $S_p$ and $S_q$, for each $p \neq q$. Moreover, we have at most
$$2^{(1/r + \eps)^2 n^2 - (1 - \sqrt{\delta})\alpha^2 n^2 + 1}$$
choices for the edges between $S_i$ and $S_j$. To see this, assume for simplicity that $|X| = |Y| = \alpha n$, and observe that we have at most $2^{(1/r + \eps - \alpha)^2 n^2}$ choices for the edges between $S_i \setminus X$ and $S_j \setminus Y$, at most $2^{(1/r + \eps - \alpha)\alpha n^2}$ choices for the edges between $X$ and $S_j \setminus Y$ (and similarly for those between $Y$ and $S_i \setminus X$), and, by Observation~\ref{nongrey}, at most $2^{\sqrt{\delta} \alpha^2 n^2 + 1}$ choices for the edges between $X$ and $Y$.

Putting these bounds together, we obtain
$$\log_2 \big( |\A(\P_n,\alpha)| \big) \; \le \; {r \choose 2}(1/r + \eps)^2 n^2 - \frac{\alpha^2 n^2}{2} + O\left( (\eps + \sqrt{\delta} + \gamma)n^2 \right).$$
The first inequality now follows if $\eps$, $\delta$ and $\gamma$ are sufficiently small. The final inequality follows by Observation~\ref{ABT}.
\end{proof}

Next, given a graph $G$, and a partition $P = (S_1,\ldots,S_r)$ of $G$, we say that a set of vertices $B \subset V(G)$ is $\alpha$-\emph{bad} for $(G,P)$ if
$$\left| \big( \Gamma(u) \cap S_j \big) \triangle \big( \Gamma(v) \cap S_j \big) \right| \; \ge \; \alpha n$$
for each $u,v \in B$ with $u \neq v$, and each $j \in [r]$. Let
$$B(G,P,\alpha) \; := \; \max\big\{ |B| \,:\, B \subset V(G) \textup{ is $\alpha$-bad for } (G,P)\big\}.$$
Now, given $\eps, \delta, \gamma, \alpha > 0$, $c,n \in \N$, and a hereditary property of graphs $\P$, we define a set $\B(\P_n,\alpha,c) \subset \P_n$ as follows:
\begin{align*}
& \B(\P_n,\alpha,c) \; := \; \big\{ G \in \P_n \,:\, \exists\textup{ a BBS-partition $P$ of $G$ for $(\eps,\delta,\gamma)$ with $B(G,P,\alpha) \ge c$} \big\}.
\end{align*}
We next show that, if $c = c(\P)$ is sufficiently large then the collection $\B(\P_n,\alpha,c)$ is small.

\begin{lemma}\label{countB}
Let $\alpha > 0$ and $r \in \N$, and let $\P$ be a hereditary property of graphs with $\chi_c(\P) = r$. There exist constants $c = c(\P,\alpha) \in \N$ and $\alpha' = \alpha'(\P,\alpha) > 0$ such that the following holds. Let $\eps > 0$, $\delta > 0$ and $\gamma > 0$ be sufficiently small, and let $n \in \N$ be sufficiently large. Then
$$\B(\P_n,\alpha,c) \; \subset \; \A(\P_n,\alpha').$$
\end{lemma}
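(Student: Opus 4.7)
The plan is to prove Lemma~\ref{countB} by contradiction: suppose $G \in \B(\P_n,\alpha,c) \setminus \A(\P_n,\alpha')$ for appropriately chosen $c$ and $\alpha'$, and force $G$ to contain some forbidden graph $H_v \in \HH(r+1,v) \setminus \P$, contradicting $G \in \P$.

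Since $\chi_c(\P)=r$, for each $v \in \{0,1\}^{r+1}$ we may pick some $H_v \in \HH(r+1,v) \setminus \P$; set $m := \max_v |V(H_v)|$ and let $K = K(r+1,m)$ be the constant supplied by Lemma~\ref{Urkfree}, so that any realisation of the cross-edge pattern of $U(r+1,K)$ inside $G$ forces $H_v \le G$ for some $v$. Choose $t \in \N$ to be a (large) constant depending on $K$ and $r$, large enough that a copy of $U(r,K)$ can be embedded, via the Slicing Lemma followed by the Embedding Lemma, into any $r$-partite $(\eps,\delta)$-grey graph whose $i$-th part is subdivided into $t$ sub-parts, using at most one vertex per sub-part (in particular $t$ exceeds the size of the largest layer of $U(r,K)$). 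Finally set $c := c_3(\alpha,r,t)$ and $\alpha' := \delta_3(c,\alpha,r,t)/C$ for a suitable constant $C \ge 1$, so that both $c$ and $\alpha'$ depend only on $\P$ and $\alpha$.

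Let $P=(S_1,\ldots,S_r)$ be the BBS-partition witnessing $G \in \B(\P_n,\alpha,c)$, with an $\alpha$-bad set $B \subset V(G)$ of size $\ge c$. Applying Lemma~\ref{key3} yields $B' \subset B$ with $|B'|=2^{rt}$ and sets $T^{(i)}_j \subset S_i$ (for $i \in [r]$, $j \in [t]$), each of size at least $C\alpha' n$, satisfying properties (a) and (b). Since $G \notin \A(\P_n,\alpha')$ and $|T^{(i)}_j| \ge \alpha' n$, every cross-pair $(T^{(i)}_j,T^{(i')}_{j'})$ with $i \ne i'$ has density in $(\delta,1-\delta)$. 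Combining this density control with the Szemer\'edi regular structure built into the BBS-partition (via the Slicing Lemma), extract subsets $W^{(i)}_j \subset T^{(i)}_j$, each of size $\ge \alpha' n$, such that every cross-pair $(W^{(i)}_j,W^{(i')}_{j'})$ with $i \ne i'$ is $(\eps,\delta)$-grey; since $W^{(i)}_j \subset T^{(i)}_j$, property (b) of Lemma~\ref{key3} still applies to any transversal of the $W$'s.

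Applying the Embedding Lemma (with further sub-slicing as needed) to this $rt$-partite grey structure, embed the cross-edge pattern of $U(r,K)$ into $\bigcup_{i,j} W^{(i)}_j$ using at most one vertex from each $W^{(i)}_j$, so that the image $X$ is a subset of a transversal of the $T^{(i)}_j$'s. Lemma~\ref{key3}(b) then yields $B'' \subset B'$ with $|B''|=2^{|X|}$ and $G[B'',X] = U(|X|)$; the sets $X$ and $B''$ together realise the cross-edge pattern of $U(r+1,K)$ in $G$, so Lemma~\ref{Urkfree} forces $H_v \le G$ for some $v$, the desired contradiction. The main obstacle is the third paragraph: $G \notin \A(\P_n,\alpha')$ only controls \emph{densities} of large cross-pairs, not $\eps$-regularity, so producing the grey sub-pairs $(W^{(i)}_j,W^{(i')}_{j'})$ requires carefully combining the Szemer\'edi partition implicit in the BBS-partition with the Slicing Lemma, while keeping each $W^{(i)}_j$ inside its original $T^{(i)}_j$ so Lemma~\ref{key3}(b) still applies to the final $U(r,K)$ embedding.
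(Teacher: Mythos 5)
Your overall architecture is the same as the paper's: derive a contradiction by using Lemma~\ref{key3} to produce $B'$ and the sets $T^{(i)}_j$, upgrade a transversal of the $T^{(i)}_j$ to a mutually grey system, embed $U(r,k)$ into it via the Embedding Lemma, and let $B'$ shatter the image to create a $U(r+1,k)$, contradicting Lemma~\ref{Urkfree}. However, the step you yourself flag as ``the main obstacle'' --- producing subsets $W^{(i)}_j \subset T^{(i)}_j$ such that \emph{every} cross-pair $(W^{(i)}_j, W^{(i')}_{j'})$ with $i \ne i'$ is simultaneously $(\eps,\delta)$-grey --- is left unresolved, and the route you sketch for it is not workable as stated. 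The Szemer\'edi partition underlying the BBS-partition gives you no control here: the sets $T^{(i)}_j$ are arbitrary subsets of the $S_i$ of size $\delta_3 n$ and need not align with the Szemer\'edi sets; the pairs of Szemer\'edi sets lying between distinct $S_i$ and $S_{i'}$ may be irregular or non-grey; and even where Slicing applies you must make one choice of $W^{(i)}_j$ per $T^{(i)}_j$ that works for all $\binom{rt}{2}$ cross-pairs at once. Membership in $\A(\P_n,\alpha')$ only rules out bad \emph{densities} for pairs of sets of size at least $\alpha' n$, so it cannot supply the missing regularity.

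The paper closes this gap with two ingredients you do not invoke. First, it applies the \emph{refinement version} of Szemer\'edi's Lemma afresh to the equipartition $\bigcup_{i,j} T^{(i)}_j$, obtaining sub-parts $U^{(i)}_j(\ell)$ of size at least $\tilde\alpha n / M$; this is exactly why $\alpha'$ is defined as $\tilde\alpha/M(n_0,\eps)$ (your unspecified constant $C$ must be this Szemer\'edi bound $M$), so that $G \notin \A(\P_n,\alpha')$ forces every $\eps$-regular cross-pair of sub-parts to have density in $(\delta,1-\delta)$, i.e.\ to be grey. Second, since only an $\eps$-fraction of pairs can fail to be regular, Observation~\ref{findKtt} (a greedy Tur\'an-type selection in the auxiliary $r$-partite ``grey-pair'' graph) produces one sub-part $W^{(i)}_j \in \{U^{(i)}_j(1),\ldots,U^{(i)}_j(m)\}$ from each $T^{(i)}_j$ so that all cross-pairs among the representatives are simultaneously grey. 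Without these two steps (or a substitute for them) the proof does not go through; the rest of your argument, including the use of Lemma~\ref{key3}(b) to shatter the embedded copy of $U(r,k)$, is correct and matches the paper.
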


\begin{proof}
First, choose a graph $H_v \in \HH(r+1,v) \setminus \P$ for each $v \in \{0,1\}^{r+1}$. Such graphs must exist because $\chi_c(\P) = r$. Let $m = \max\{ |H_v| : v \in \{0,1\}^{r+1}\}$, and let $k = k(r,m) \in \N$ be the constant in Lemma~\ref{Urkfree}.

Now, let $t = t(\alpha,r,k) \in \N$ be sufficiently large, let $c = c_3(\alpha,r,t)$ and $\tilde{\alpha} = \delta_3(c,\alpha,r,t)$ be the constants in Lemma~\ref{key3}, and let $\alpha' = \tilde{\alpha}/M(n_0,\eps)$, where $M(.,.)$ is the constant in the refinement version of Szemer\'edi's Lemma, and $n_0 = n_0(r,t)$ is the constant in Observation~\ref{findKtt}.

Let $G \in \P_n \setminus \A(\P_n,\alpha')$, let $\eps$, $\delta$ and $\gamma$ be sufficiently small, and suppose that there exists a BBS-partition $P$ of $G$ for $(\eps,\delta,\gamma)$ such that $B(G,P,\alpha) \ge c$, i.e., there exists a set $B \subset V(G)$, with $|B| \ge c$, which is $\alpha$-bad for $(G,P)$.

\medskip
\noindent \ul{Claim}: $G$ is not $U(r+1,k)$-free.

\begin{proof}[Proof of claim]
By Lemma~\ref{key3}, there exists a set $B' \subset B$, with $|B'| = 2^{rt}$, and disjoint sets $T^{(i)}_j \subset S_i$, with $|T^{(i)}_j| = \tilde{\alpha} n$ for each $i \in [r]$ and $j \in [t]$, such that
\begin{itemize}
\item[$(a)$] If $u,v \in T^{(i)}_j$ then $\Gamma(u) \cap B' = \Gamma(v) \cap B'$.
\item[$(b)$] If $W = \{w_{11},\ldots,w_{rt}\}$, where $w_{ij} \in T^{(i)}_j$ for each $i \in [r]$, $j \in [t]$, then $B' \to W$.
\end{itemize}
Let $T = \bigcup_{i,j} T^{(i)}_j$, and apply the refinement version of Szemer\'edi's Lemma (for $\eps$) to the (equi-)partition
$$\bigcup_{i = 1}^r \bigcup_{j = 1}^{t} T^{(i)}_j$$
of $T$. We obtain, for each $i \in [r]$ and $j \in [t]$, a partition $(U^{(i)}_j(1), \ldots, U^{(i)}_j(m))$ of $T^{(i)}_j$, such that the resulting partition of $T$ is a Szemer\'edi partition. Moreover, by our choices of constants above, we have $m \ge n_0(r,t)$ and $|U^{(i)}_j| \ge \alpha' n$ for every $i \in [r]$ and $j \in [t]$.

Suppose first that there exists a pair $(U^{(i)}_j(\ell), U^{(i')}_{j'}(\ell'))$, where $i \neq i'$, which is $\eps$-regular but not $(\eps,\delta)$-grey. Then the graph $G[U^{(i)}_j(\ell), U^{(i')}_{j'}(\ell')]$ has density in $[0,\delta) \cup (1-\delta,1]$, and so $G \in \A(\P_n,\alpha')$, a contradiction.

Thus every $\eps$-regular pair $(U^{(i)}_j(\ell), U^{(i')}_{j'}(\ell'))$ is also $(\eps,\delta)$-grey. By the definition of a Szemer\'edi partition, at most $\eps (mrt)^2$ pairs are irregular, and so the number of pairs $(U^{(i)}_j(\ell), U^{(i')}_{j'}(\ell'))$ with $i \neq i'$ which are not $(\eps,\delta)$-grey is at most $(1 - 2\eps){r \choose 2} (mt)^2$.

We apply Observation~\ref{findKtt} to the $r$-partite graph $F$ where $V(F) = \{U^{(i)}_j(\ell) : i \in [r], j \in [t], \ell \in [m]\}$, and a pair of vertices $\{U^{(i)}_j(\ell), U^{(i')}_{j'}(\ell')\}$ (with $i \neq i'$) is an edge of $F$ if and only if they form an $(\eps,\delta)$-grey pair. Since we chose $\eps > 0$ sufficiently small, and $m \ge n_0$, it follows that there exist representatives $\{ W^{(i)}_j : i \in [r], j \in [t]\}$, where $W^{(i)}_j \in \{U^{(i)}_j(1), \ldots, U^{(i)}_j(m)\}$, such that \emph{every} pair $(W^{(i)}_j,W^{(i')}_{j'})$ with $i \neq i'$ is $(\eps,\delta)$-grey.

It follows, by the Embedding Lemma, that there exists a copy of $U(r,k) \subset G$ with exactly one vertex in each set $W^{(i)}_j$. But $B'$ shatters this copy of $U(r,k)$, by condition $(b)$ above, and so $G$ is not $U(r+1,k)$-free, as claimed.
\end{proof}

By Lemma~\ref{Urkfree} and our choice of $k$, it follows that $H_v \le G \in \P$ for some forbidden graph $H_v$, which is a contradiction. Thus $\B(\P_n,\alpha,c) \subset \A(\P_n,\alpha')$, as required.
\end{proof}

Given $\alpha > 0$, a graph $G$ on $n$ vertices, a subset $A \subset V(G)$ and two vertices $u,v \in V(G)$, we say that $u$ is an \emph{$\alpha$-clone} of $v$ with respect to $A$ if
$$\left| \big( \Gamma(u) \cap A \big) \triangle \big( \Gamma(v) \cap A \big) \right| \; \le \; \alpha n.$$
Let $P = (S_1,\ldots,S_r)$ be a partition of $G$, and let $B$ be an $\alpha$-bad set for $(G,P)$. If $B$ is chosen to be maximal, then for every vertex $v \in V(G)$, there exists a vertex $b \in B$, and an index $j \in [r]$ such that $v$ is an $\alpha$-clone of $b$ with respect to $S_j$. Define
$$j(v)  \; := \; \min \big\{ j \in [r] \,:\, \exists \, b \in B \textup{ such that $v$ is an $\alpha$-clone of $b$ with respect to $S_j$} \big\}.$$
Note that the function $j(.)$ in fact depends on the triple $(P,B,\alpha)$. It will usually be obvious which partition $P$, set $B$ and constant $\alpha > 0$ we are using, so we suppress this dependence. When it is not obvious from the context, we shall clarify. % that $j$ is defined with respect to $(P,B,\alpha)$.

The following observation is an immediate consequence of the definition of an $\alpha$-clone, together with Observation~\ref{nongrey}.

\begin{obs}\label{choices}
Let $\alpha > 0$ be sufficiently small, let $G$ be a graph on $n$ vertices, let $A,B \subset V(G)$, and suppose $v \in V(G)$ is an $\alpha$-clone of some vertex in $B$ with respect to $A$. Then, given the edges of $G[A,B]$, we have at most
$$|B| 2^{\sqrt{\alpha} n}$$
choices for the edges between $v$ and $A$.
\end{obs}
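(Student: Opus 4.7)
The plan is to count in two steps: first choose the ``template'' vertex $b \in B$ witnessing that $v$ is an $\alpha$-clone, and then count how many neighborhoods in $A$ lie close to $\Gamma(b) \cap A$. For the first step there are at most $|B|$ choices. For the second step, once $b$ is fixed, the edges of $G[A,B]$ determine $\Gamma(b) \cap A$ exactly, so the number of possibilities for the edges between $v$ and $A$ is at most the number of subsets of $A$ whose symmetric difference with $\Gamma(b) \cap A$ has size at most $\alpha n$.

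That number is bounded by
\[
\sum_{i = 0}^{\alpha n} \binom{|A|}{i} \; \le \; \sum_{i = 0}^{\alpha n} \binom{n}{i}.
\]
Using the standard estimate $\sum_{i \le \alpha n} \binom{n}{i} \le 2^{H(\alpha) n}$ for $\alpha \le 1/2$, where $H(\alpha) = -\alpha \log_2 \alpha - (1-\alpha) \log_2(1-\alpha)$ is the binary entropy function, and noting that $H(\alpha) \to 0$ as $\alpha \to 0$ (in fact $H(\alpha) \sim \alpha \log_2(1/\alpha)$), we see that $H(\alpha) \le \sqrt{\alpha}$ whenever $\alpha$ is sufficiently small. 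Hence the number of choices for the edges between $v$ and $A$ is at most $2^{\sqrt{\alpha} n}$.

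Multiplying the two factors together gives the claimed bound of $|B| 2^{\sqrt{\alpha} n}$. The only ``obstacle'' is the routine inequality $H(\alpha) \le \sqrt{\alpha}$ for small $\alpha$, which is the analogue of the crude entropy bound already used in the proof of Observation~\ref{nongrey}; everything else is just unwinding the definition of $\alpha$-clone.
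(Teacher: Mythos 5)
Your proof is correct and matches the paper's intended argument exactly: the paper states this observation as an immediate consequence of the definition of an $\alpha$-clone together with the crude entropy bound of Observation~\ref{nongrey}, which is precisely your two-step count ($|B|$ choices for the template vertex $b$, then at most $\sum_{i \le \alpha n}\binom{n}{i} \le 2^{H(\alpha)n} \le 2^{\sqrt{\alpha}n}$ choices for the symmetric difference). Nothing further is needed.
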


We would like to have $v \in S_{j(v)}$ for every $v \in V(G)$. In fact we shall prove that, for almost every graph $G \in \P$, we can \emph{adjust} any given BBS-partition of $G$ to guarantee that this holds.

First, we shall show that almost all graphs $G \in \P_n$ have at most $\alpha n/2$ vertices with $v \not\in S_{j(v)}$. Indeed, given $\alpha > 0$, a graph $G$, a BBS-partition $P = (S_1,\ldots,S_r)$ of $G$, and a maximal $\alpha$-bad set $B$ for $(G,P)$, let
$$J(G,P,B,\alpha) \; := \; \{ v \in V(G) \,:\, v \not\in S_{j(v)} \}.$$
Now, given $n \in \N$, $\alpha,\eps,\delta,\gamma > 0$ and a hereditary property of graphs $\P$, let
\begin{align*}
& \C(\P_n,\alpha) \; := \; \Big\{ G \in \P_n \,:\, \exists \textup{ a BBS-partition $P$ of $G$ for $(\eps,\delta,\gamma)$ and a maximal }\\
& \hspace{4cm} \textup{ $(2\alpha)$-bad set $B$ for $(G,P)$ such that $|J(G,P,B,2\alpha)| \ge \alpha n$} \Big\}.
\end{align*}
The next lemma says that the set $\C(\P_n,\alpha)$ is small.

\begin{lemma}\label{countC}
Let $r \in \N$, and let $\P$ be a hereditary property of graphs with $\chi_c(\P) = r$. Let $\alpha > 0$ be small, and let $\eps,\delta,\gamma > 0$ be sufficiently small, and $n \in \N$ be sufficiently large. Then,
$$|\C(\P_n,\alpha)| \; \le \; 2^{(1 - 1/r) n^2/2 \, - \, \alpha n^2/3r^3} \; = \; o(|\P_n|).$$
\end{lemma}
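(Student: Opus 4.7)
The plan is to mimic the counting argument of Lemma~\ref{countA}, extracting an exponential saving from each vertex $v \in J$ by means of the clone relation defining $j(v)$. Fix $G \in \C(\P_n,\alpha)$, let $P = (S_1,\ldots,S_r)$ be the promised BBS-partition, let $B$ be a maximal $(2\alpha)$-bad set for $(G,P)$, and let $J = J(G,P,B,2\alpha)$, so $|J| \ge \alpha n$. I apply a union bound over all admissible data: the partition $P$ (at most $n^n$ choices), the set $B$ (at most $n^{|B|}$ choices, with $|B| \le c(\P,2\alpha)$ constant), the subset $J \subset V(G)$ ($2^n$ choices), the function $j : J \to [r]$ encoding $j(v)$ ($r^n$ choices), and an assignment $c : J \to B$ specifying for each $v \in J$ a vertex of $B$ of which $v$ is a $(2\alpha)$-clone with respect to $S_{j(v)}$ ($|B|^n$ choices). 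The combined enumeration overhead is $2^{O(n\log n)}$.

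Given such a tuple, the number of compatible graphs $G$ is bounded as follows. The edges inside each $S_k$ contribute at most $2^{(\eps + \sqrt{\delta} + \gamma)n^2}$ choices, exactly as in Lemma~\ref{countA}, since each Szemer\'edi block is small and each non-grey pair within $S_k$ has density close to $0$ or $1$ (invoking Observation~\ref{nongrey}). The cross-edges between pairs $(S_p,S_q)$ with $p \ne q$ contribute at most $\prod_{p < q} 2^{|S_p||S_q|} \le 2^{{r \choose 2}(1/r+\eps)^2 n^2}$ choices \emph{a priori}. However, for each $v \in J$ with $v \in S_i$ and $j(v) = j \ne i$, the edges between $v$ and $S_j$ are further constrained: by Observation~\ref{choices}, there are at most $|B| \cdot 2^{\sqrt{2\alpha}\,n}$ choices for these edges, rather than the $2^{|S_j|}$ permitted by the naive bound. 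This yields a saving of at least $2^{|S_j| - \sqrt{2\alpha}\,n - \log|B|} \ge 2^{n/(2r)}$ per vertex $v \in J$, provided $\alpha$ is small enough that $\sqrt{2\alpha} \ll 1/r$ and $n$ is sufficiently large.

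Putting the pieces together yields
\[
|\C(\P_n,\alpha)| \;\le\; 2^{(1-1/r)n^2/2 \;+\; O((\eps+\sqrt{\delta}+\gamma)n^2) \;-\; |J|\cdot n/(2r) \;+\; O(n\log n)}.
\]
Since $|J| \ge \alpha n$, and $\eps, \delta, \gamma$ may be taken small enough to absorb the error into the saving, this is at most $2^{(1-1/r)n^2/2 \,-\, \alpha n^2/(3r^3)}$, giving the first inequality. The equality $|\C(\P_n,\alpha)| = o(|\P_n|)$ is then immediate from Observation~\ref{ABT}.

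The principal obstacle is bookkeeping: one must verify that the savings on edges incident to $J$ are not doubly subtracted (each $v \in J$ lies in exactly one $S_i$, and the saving corresponds to the single index $j(v) \ne i$, so it is disjoint from the bound on edges inside $S_i$ and from the cross-edges from $v$ to $S_k$ for $k \ne i, j(v)$), and that the enumeration overhead of $2^{O(n\log n)}$ from choosing $B$, $J$, $j$ and $c$ is comfortably dominated by the per-vertex saving $2^{n/(2r)}$ accumulated over the $\alpha n$ vertices of $J$.
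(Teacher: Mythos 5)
Your counting strategy is essentially the paper's: enumerate the partition, $B$, and the clone data, then use Observation~\ref{choices} to show that each vertex $v$ lying outside $S_{j(v)}$ forces an exponential saving on the edges between $v$ and $S_{j(v)}$. The one structural difference is that the paper first applies pigeonhole to fix a single pair $(i,j)$ and a set $C \subset S_i$ of at least $\alpha n/r^2$ vertices with $j(v)=j$, and extracts the saving only from the edges between $C$ and $S_j$; you instead harvest a saving from every vertex of $J$ at once, which in principle gives a better constant ($\alpha n^2/2r$ rather than $\alpha n^2/2r^3$).

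However, your disjointness check is incomplete, and this is exactly the point the paper's pigeonhole step is designed to avoid. You verify that the edge set $E_v = \{v\}\times S_{j(v)}$ is disjoint from the edges inside $S_i$ and from $\{v\}\times S_k$ for $k \ne i, j(v)$, but not that $E_v$ and $E_w$ are disjoint for \emph{distinct} $v,w \in J$: if $v \in S_i$ with $j(v)=j$ and $w \in S_j$ with $j(w)=i$, then the potential edge $vw$ lies in both $E_v$ and $E_w$, so subtracting $|S_{j(v)}|$ from the naive exponent once per vertex over-subtracts. The total over-subtraction is $\sum_{\{i,j\}} |C_{ij}||C_{ji}|$, where $C_{ij} = \{v \in J \cap S_i : j(v)=j\}$, and since $|C_{ji}| \le |S_j| \le (1/r+\eps)n$ this is at most roughly $\tfrac{n}{2r}|J|$ --- up to half of the claimed saving $\sum_{v\in J}|S_{j(v)}| \approx \tfrac{n}{r}|J|$, but no more. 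So the argument survives with a per-vertex saving of about $n/3r$ instead of $n/2r$, which still comfortably yields the stated exponent $\alpha n^2/3r^3$; but as written the bound $2^{n/(2r)}$ per vertex is not justified. (In the paper's version the sets $\{v\}\times S_j$, $v \in C \subset S_i$, are automatically pairwise disjoint, so the issue does not arise.) Two smaller remarks: your assertion that $|B| \le c(\P,2\alpha)$ is a constant is not available at this stage --- Lemma~\ref{countC} is proved for all of $\C(\P_n,\alpha)$, not only for graphs outside $\B(\P_n,2\alpha,c)$ --- though this is harmless since $2^n$ choices for $B$ and a factor $\log n$ per vertex are absorbed anyway; and, like the paper, you should order the edge choices so that the edges between $B$ and the parts $S_j$ are fixed before Observation~\ref{choices} is invoked.
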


\begin{proof}
The proof is almost the same as that of Lemma~\ref{countA}. Indeed, let $G \in \C(\P_n,\alpha)$, and let $P = (S_1,\ldots,S_r)$ be a BBS-partition of $G$ for $(\eps,\delta,\gamma)$, and $B \subset V(G)$ be a maximal $(2\alpha)$-bad set for $(G,P)$, such that $|J(G,P,B,2\alpha)| \ge \alpha n$.
By the pigeonhole principle, there exists $i,j \in [r]$ (with $i \neq j$) such that
$$|C| := \left| \big\{ v \in S_i \,:\, j(v) = j \big\} \right| \; \ge \; \alpha' n,$$
where $\alpha' = \alpha/r^2$.

Now we simply count the graphs in $\C(\P_n,\alpha)$. We have at most $n^n$ choices for the partition $P$, the set $B$, the index $j$ and the set $C$. By Observation~\ref{nongrey}, and the definition of a BBS-partition, we have at most
$$2^{(\eps + \sqrt{\delta} + \gamma)n^2}$$ choices for the edges inside the set $S_k$, for each $k \in [r]$.

Next, recall that $(1/r - \eps)n \le |S_k| \le (1/r + \eps)n$ for each $k \in [r]$, by the definition of a BBS-partition, and so we have at most
$$2^{(1/r + \eps)^2 n^2}$$
choices for the edges between $S_p$ and $S_q$, for each $p \neq q$. Moreover, we have at most
$$2^{(1/r + \eps)^2 n^2 - (\alpha'/2r) n^2}$$
choices for the edges between $S_i$ and $S_j$. Indeed, by Observation~\ref{choices} we have at most $n 2^{\sqrt{2\alpha'} |C| n}$ choices for the edges between $C$ and $S_j$, and we have at most $2^{(1/r + \eps)^2 n^2 - |C| n/r}$ choices for the edges between $S_i \setminus C$ and $S_j$.

Putting these bounds together, we obtain
$$\log_2 \big( |\A(\P_n,\alpha)| \big) \; \le \; {r \choose 2}(1/r + \eps)^2 n^2 \, - \, \frac{\alpha n^2}{2r^3} \, + \, O\left( (\eps + \sqrt{\delta} + \gamma)n^2 \right).$$
The first inequality now follows if $\eps$, $\delta$ and $\gamma$ are sufficiently small. The final inequality follows by Observation~\ref{ABT}.
\end{proof}

Now let $G$ be a graph, let $r \in \N$, and let $\alpha,\alpha',\eps,\delta,\gamma > 0$. Given a BBS-partition $P = (S_1,\ldots,S_r)$ of $G$ for $(\eps,\delta,\gamma)$, and a maximal $(2\alpha)$-bad set for $(G,P)$, we make the following definition.

\begin{defn}
An \emph{$\alpha$-adjustment} of $(G,P)$ with respect to $B$ is a partition $P' = (S'_1,\ldots,S'_r)$ of $V(G)$ such that, for each $j \in [r]$, the following holds. $|S_j \triangle S'_j| \le \alpha n$, and for every $v \in S'_j$, there exists $b \in B$ such that $v$ is a $(3\alpha)$-clone of $b$ with respect to $S'_j$.
\end{defn}

Given $n \in \N$, constants $\eps,\delta,\gamma,\alpha > 0$, and a hereditary property of graphs $\P$, define
\begin{align*}
&\D(\P_n,\alpha) \; := \; \Big\{ G \in \P_n \,:\, \exists \textup{ a BBS-partition $P$ of $G$ for $(\eps,\delta,\gamma)$ and a maximal } (2\alpha)\textup{-bad}\\
& \hspace{2.2cm} \textup{ set $B$ for $(G,P)$ such that $\nexists$ an $\alpha$-adjustment of $(G,P)$ with respect to $B$} \Big\}.
\end{align*}
The next lemma follows easily from the definitions.

\begin{lemma}\label{countD}
Let $r \in \N$, and let $\P$ be a hereditary property of graphs with $\chi_c(\P) = r$. Let $\alpha > 0$, and let $\eps,\delta,\gamma > 0$ be sufficiently small. Then,
$$\D(\P_n,\alpha) \; \subset \; \C(\P_n,\alpha).$$
\end{lemma}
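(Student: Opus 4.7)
The plan is to prove the contrapositive: starting from $G \in \P_n$ together with a BBS-partition $P = (S_1,\ldots,S_r)$ of $G$ for $(\eps,\delta,\gamma)$ and a maximal $(2\alpha)$-bad set $B$ for $(G,P)$, I will show that if $|J(G,P,B,2\alpha)| < \alpha n$ then an $\alpha$-adjustment of $(G,P)$ with respect to $B$ necessarily exists, so $G \not\in \D(\P_n,\alpha)$. This replaces the lemma by an explicit construction, avoiding any counting or probabilistic machinery.

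The first step is to note that $j(v)$ is well-defined for \emph{every} $v \in V(G)$: otherwise $B \cup \{v\}$ would remain $(2\alpha)$-bad, contradicting the maximality of $B$. Consequently the set $J = J(G,P,B,2\alpha)$ is well-defined and, by assumption, satisfies $|J| < \alpha n$.

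Next I would define the adjusted partition by moving each vertex of $J$ to the part indicated by $j(\,\cdot\,)$, namely
\[
S'_j \; := \; \big(S_j \setminus J\big) \cup \big\{ v \in J : j(v) = j \big\}
\]
for each $j \in [r]$. Every vertex of $S_j \triangle S'_j$ lies in $J$, so the first clause of the definition of an $\alpha$-adjustment, $|S_j \triangle S'_j| \le |J| < \alpha n$, is immediate. For the second clause, fix $v \in S'_j$; by construction $j(v) = j$, so there exists $b \in B$ such that $v$ is a $(2\alpha)$-clone of $b$ with respect to $S_j$. Writing $D = \Gamma(v) \triangle \Gamma(b)$, the elementary bound $|D \cap S'_j| \le |D \cap S_j| + |S_j \triangle S'_j|$ yields $|D \cap S'_j| \le 2\alpha n + \alpha n = 3\alpha n$, so $v$ is a $(3\alpha)$-clone of $b$ with respect to $S'_j$, verifying the remaining condition.

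The only step demanding any care is the symmetric-difference estimate: once one observes that $(\Gamma(v) \triangle \Gamma(b)) \cap S'_j \subset \big((\Gamma(v) \triangle \Gamma(b)) \cap S_j\big) \cup (S_j \triangle S'_j)$, everything reduces to unwinding the definitions, and no earlier lemma of the paper is invoked. I do not anticipate any serious obstacle here; the content of the lemma is really a clean bookkeeping translation between ``few misplaced vertices'' and ``an adjustment exists''.
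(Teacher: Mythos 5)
Your proof is correct and follows essentially the same route as the paper's: both define $S'_j = \{v : j(v)=j\}$, use maximality of $B$ to ensure $j(\cdot)$ is everywhere defined, bound $|S_j \triangle S'_j|$ by $|J| < \alpha n$, and upgrade the $(2\alpha)$-clone condition on $S_j$ to a $(3\alpha)$-clone condition on $S'_j$ via the symmetric-difference estimate. Your explicit containment $(\Gamma(v)\triangle\Gamma(b))\cap S'_j \subset \bigl((\Gamma(v)\triangle\Gamma(b))\cap S_j\bigr)\cup(S_j\triangle S'_j)$ is exactly the step the paper leaves implicit.
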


\begin{proof}
Let $G \in \P_n$, and suppose $G \in \D(\P_n,\alpha) \setminus \C(\P_n,\alpha)$. Let $P = (S_1,\ldots,S_r)$ be an arbitrary BBS-partition of $G$ for $(\eps,\delta,\gamma)$, and let $B$ be a maximal $(2\alpha)$-bad set $B$ for $(G,P)$. Note that, since $G \not\in \C(\P_n,\alpha)$, we have
$$|J(G,P,B,2\alpha)| \; = \; |\{ v \in V(G) \,:\, v \not\in S_{j(v)} \}| \; \le \; \alpha n.$$

For each $j \in [r]$, let $S'_j = \{v \in V(G) \,:\, j(v) = j\}$. We claim that $P' = (S'_1,\ldots,S'_r)$ is an $\alpha$-adjustment of $(G,P)$ with respect to $B$. Indeed, since $|J(G,P,B,2\alpha)| \le \alpha n$, it follows immediately that $|S_j \triangle S'_j| < \alpha n$ for every $j \in [r]$. Moreover, for each $v \in S'_j$ we have $j(v) = j$, and so there exists $b \in B$ such that $v$ is a $(2\alpha)$-clone of $b$ with respect to $S_j$. But $|S_j \triangle S'_j|< \alpha n$, so $v$ is a $(3\alpha)$-clone of $b$ with respect to $S_j$.

Thus $P'$ is an $\alpha$-adjustment of $(G,P)$ with respect to $B$, as claimed. But $P$ and $B$ were chosen arbitrarily, so this contradicts the fact that $G \in \D(\P_n,\alpha)$. Thus $\D(\P_n,\alpha) \; \subset \; \C(\P_n,\alpha)$, as required.
\end{proof}

Finally, for each graph $G$ on vertex set $S_1 \cup \ldots \cup S_r$, and each integer $k \in \N$, we choose a collection of vertex-disjoint copies of $U(t,k)$ for each $2 \le t \le r + 1$, using the following algorithm.
\begin{alg}
Set $\ell := 1$, $t := r + 1$ and $X = \emptyset$. Repeat the following steps until $t = 1$.
\begin{itemize}
\item[1.] Suppose there exists a copy $H$ of $U(t,k)$ in $G - X$, and a function $i : [t] \to [r]$ such that:\\[-2ex]
\begin{itemize}
\item[$(a)$] $V(H) = A_1 \cup \ldots \cup A_t$,\\[-2ex]
\item[$(b)$] $A_{j+1} \to A_1 \cup \ldots \cup A_j$ for each $j \in [t - 1]$,\\[-2ex]
\item[$(c)$] $A_j \subset S_{i(j)} \setminus X$ for each $j \in [t]$, and\\[-2ex]
\item[$(d)$] $i(1) = i(2)$, and $i(j) = i(j') \Leftrightarrow j = j'$ for $j,j' \ge 2$.\\[-2ex]
\end{itemize}
Then set $U_\ell := V(H)$, $X := X \cup U_\ell$ and $\ell := \ell + 1$, and repeat Step 1.
\item[2.] Otherwise, set $t := t - 1$, and go to Step 1.
\end{itemize}
\end{alg}
In other words, we first find a maximal collection of vertex-disjoint copies of $U(r+1,k)$, such that for each copy, the smallest two classes are in the same set ($S_{i(1)} = S_{i(2)}$) as each other, and the other classes in different sets ($S_{i(3)},\ldots,S_{i(r+1)}$). We then find a maximal collection of vertex-disjoint copies of $U(r,k)$, which are also disjoint from each of the copies of $U(r+1,k)$. We repeat this for each $2 \le t \le r + 1$, in decreasing order.

We obtain from the algorithm a collection $\{U_1,\ldots,U_L\}$, where $G[U_\ell] = U(t,k)$ for some $2 \le t \le r + 1$, and the sets $U_\ell$ are pairwise disjoint. The following observation describes the key property of these sets.

\begin{obs}\label{algprop}
Let $U_1, \ldots, U_L$ be the sets obtained from the algorithm applied (for $k$) to the graph $G$ and partition $(S_1,\ldots,S_r)$ of $V(G)$. Then, for each $j \in [r]$ and $\ell \in [L]$, if $U_\ell \cap S_j = \emptyset$ then the set $S_j \setminus \ds\bigcup_{j=1}^\ell U_j$ does not shatter $U_\ell$.
\end{obs}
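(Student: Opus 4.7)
My plan is to argue by contradiction. Suppose there exist $\ell \in [L]$ and $j \in [r]$ with $U_\ell \cap S_j = \emptyset$, yet $S_j \setminus \bigcup_{j'=1}^{\ell} U_{j'}$ shatters $U_\ell$. Let $t$ be the integer with $G[U_\ell] = U(t,k)$, let $A_1,\ldots,A_t$ be its classes, and let $i:[t]\to[r]$ be the associated index function produced by the algorithm. The strategy is to use the alleged shattering to glue one extra class onto $U_\ell$, producing a copy of $U(t+1,k)$ that would have been a legal choice while the algorithm was still processing parameter $t+1$; this contradicts the fact that the algorithm had already decremented past $t+1$ before $U_\ell$ was chosen. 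Note that the hypothesis $U_\ell \cap S_j = \emptyset$ already forces $t \le r$, since for $t = r+1$ the indices $i(2),\ldots,i(r+1)$ would exhaust $[r]$; hence the sought copy of $U(t+1,k)$ really does lie within the algorithm's range.

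For the construction, since $S_j \setminus \bigcup_{j'=1}^{\ell} U_{j'}$ shatters $U_\ell$, I pick one representative for each subset of $U_\ell$ to extract a set $A_{t+1}' \subset S_j \setminus \bigcup_{j'=1}^{\ell} U_{j'}$ with $|A_{t+1}'| = 2^{|U_\ell|}$ and $A_{t+1}' \to U_\ell$; this is exactly the size prescribed by the definition of $U(t+1,k)$, so $H' := G[U_\ell \cup A_{t+1}']$ is a copy of $U(t+1,k)$. Extend the index function by $i'(p):=i(p)$ for $p \le t$ and $i'(t+1):=j$. Conditions $(a)$ and $(b)$ of the algorithm are inherited from $U_\ell$ together with the fresh shattering. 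Condition $(d)$ holds because $i'(1)=i(1)=i(2)=i'(2)$, the images $i(2),\ldots,i(t)$ are pairwise distinct from the structure of $U_\ell$, and $j \notin \{i(1),\ldots,i(t)\}$ since $U_\ell \cap S_j = \emptyset$ rules out $A_p \subset S_j$ for each $p \in [t]$.

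It remains to show that $H'$ was a legal choice at an earlier stage of the algorithm. Let $X^{\ast}$ denote the value of the accumulator $X$ at the instant the algorithm's running parameter dropped from $t+1$ to $t$. Since the parameter only decreases and $U_\ell$ is added after it has reached $t$, every element of $X^{\ast}$ lies in $\bigcup_{j' < \ell} U_{j'}$. Both $U_\ell$ and $A_{t+1}'$ are disjoint from $\bigcup_{j' < \ell} U_{j'}$, so $H' \subset G - X^{\ast}$ and condition $(c)$ is also satisfied for $H'$ at $X = X^{\ast}$. Hence $H'$ together with the index function $i'$ would have qualified for selection at Step 1 with parameter $t+1$, contradicting the algorithm's decision to decrement at that point. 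The main obstacle, I expect, is purely bookkeeping: keeping track of which configuration of $X$ is in force at each stage and verifying that the enlarged function $i'$ simultaneously respects the rigid placement condition $(c)$ and the distinctness condition $(d)$; once that is done, the contradiction is immediate.
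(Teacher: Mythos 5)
Your proof is correct and follows exactly the same route as the paper's: the alleged shattering yields a class $A_{t+1}$ of the prescribed size so that $U_\ell \cup A_{t+1}$ induces a copy of $U(t+1,k)$ which would have been selected at an earlier stage of the algorithm, a contradiction. The paper states this in two lines; your additional bookkeeping (the check that $t\le r$, the verification of conditions $(a)$--$(d)$, and the disjointness from the accumulator $X^*$ at the moment the parameter dropped) is all accurate and just makes explicit what the paper leaves implicit.
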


\begin{proof}
Suppose $G[U_\ell] = U(t,k)$, and $A_{t+1} \to U_\ell$ for some $A_{t+1} \subset S_j \setminus \ds\bigcup_{j=1}^\ell U_j$. Then $U_\ell \cup A_{t+1}$ induces a copy of $U(t+1,k)$, and so this set would have been chosen at an earlier step of the algorithm.
\end{proof}

Now, given $k \in \N$, a graph $G$, a BBS-partition $P$ of $G$ for $(\eps,\delta,\gamma)$, a maximal $(2\alpha)$-bad set $B$ for $(G,P)$, and an $\alpha$-adjustment $P' = (S'_1,\ldots,S'_r)$ of $(G,P)$ with respect to $B$, let
\begin{align*}
& U(G,P',k) \; := \; \bigcup_{\ell = 1}^L U_\ell,
\end{align*}
where $\{U_1,\ldots,U_L\}$ are the sets given by the algorithm, applied to the graph $G$ and the partition $P'$.

Given $n,k \in \N$, constants $\eps,\delta,\gamma,\alpha > 0$, and a hereditary property of graphs $\P$, let
\begin{align*}
\U(\P_n,\alpha,k) \; := \; \Big\{ G \in \P_n \,:\, \exists \textup{ a BBS-partition $P$ of $G$ for $(\eps,\delta,\gamma)$, a maximal $(2\alpha)$-bad}& \\
\hspace{5cm} \textup{set $B$ for $(G,P)$, and an $\alpha$-adjustment $P' = (S'_1,\ldots,S'_r)$ of}& \\
\hspace{6cm} \textup{$(G,P)$ with respect to $B$ with $|U(G,P',k)| \ge n^{1-\alpha}$} \Big\}.&
\end{align*}

Theorem~\ref{structure} is an easy corollary of the following lemma, together with Lemmas~\ref{S1S2}, \ref{countC} and \ref{countD}. The proof of the lemma uses Theorem~\ref{count}, and Lemmas~\ref{Urkfree}, \ref{noUk}, \ref{countA}, \ref{countB}, \ref{countC} and~\ref{countD}.

\begin{lemma}\label{countU}
Let $r \in \N$, and let $\P$ be a hereditary property of graphs with $\chi_c(\P) = r$. There exist $k = k(\P) \in \N$ and $\alpha = \alpha(k,\P) > 0$ such that the following holds. Let $\eps,\delta,\gamma > 0$ be sufficiently small and $n \in \N$ be sufficiently large. Then
$$|\U(\P_n,\alpha,k)| \; \le \; 2^{(1 - 1/r)n^2/2 - \alpha^2 n^{2-\alpha}} \, = \; o(|\P_n|).$$
\end{lemma}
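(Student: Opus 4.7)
The plan is to reduce to $G$ lying outside the four exceptional sets of Lemmas~\ref{countA}--\ref{countD}, and then count directly. Take $k = k(r,m)$ from Lemma~\ref{Urkfree}, so that every $G \in \P$ is $U(r+1,k)$-free; set $C_0 := |U(r,k)|$, $\delta_0 := -\log_2(1-2^{-C_0}) > 0$, and $C := \delta_0/(4rC_0)$. Choose $\alpha = \alpha(k,\P) > 0$ so small that $H(3\alpha r)/r < C/2$, $\alpha^2 < C/2$, and $\alpha < \eps_0(k)$, where $H$ is the binary entropy function and $\eps_0$ is the constant from Theorem~\ref{count}. The union of the four exceptional sets (with the appropriate constants matched to $\alpha$ via Lemma~\ref{countB}) contributes $o(|\P_n|)$, so we may restrict to $G \in \U(\P_n,\alpha,k)$ outside this union.

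For such $G$, fix a BBS-partition $P$, a maximal $(2\alpha)$-bad set $B$ with $|B|\le c$, an $\alpha$-adjustment $P' = (S'_1,\ldots,S'_r)$, and the sets $U_1,\ldots,U_L$ produced by the algorithm, writing $u := \sum_\ell |U_\ell| \ge n^{1-\alpha}$. Since $G$ is $U(r+1,k)$-free, each $U_\ell$ realises $U(t_\ell,k)$ for some $t_\ell \in \{2,\ldots,r\}$, so $L \ge u/C_0$. I then bound the number of graphs by partitioning $\binom{V(G)}{2}$ and counting choices as follows.
\begin{itemize}
\item[$(i)$] Edges inside $V'_j := S'_j \setminus \bigcup_\ell U_\ell$: by the algorithm's maximality at $t=2$, $G[V'_j]$ is $U(k)$-free, so Theorem~\ref{count} gives at most $2^{n^{2-\eps_0}}$ choices per $j$.
\item[$(ii)$] Edges inside $S'_j$ incident to $U \cap S'_j$: the clone property restricts $\Gamma(v) \cap S'_j$ for each $v \in U\cap S'_j$ to Hamming distance at most $3\alpha n$ from $\Gamma(b_v) \cap S'_j$ for some $b_v \in B$; multiplied over $v$ and summed over $j$, this contributes at most $H(3\alpha r) \cdot un/r \cdot (1+o(1))$ to the exponent.
\item[$(iii)$] Cross-edges between distinct $S'_p, S'_q$: baseline $\sum_{p<q} s'_p s'_q \le (1-1/r)n^2/2$.
\item[$(iv)$] Algorithm savings on cross-edges: by Observation~\ref{algprop}, for each $\ell$ and each $j \in J(\ell) := \{j : U_\ell\cap S'_j = \emptyset\}$, the set $V'_j \subseteq S'_j\setminus\bigcup_{\ell'\le\ell}U_{\ell'}$ does not shatter $U_\ell$, so every $v \in V'_j$ has at most $2^{|U_\ell|}-1$ choices for $\Gamma(v)\cap U_\ell$, saving $\delta_\ell \ge \delta_0$ per such pair. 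The auxiliary data (partitions, sets $U_\ell$, functions $i_\ell$, etc.) contributes a negligible factor of $n^{O(n)}$.
\end{itemize}

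A short calculation, splitting on whether $u \le n/(4r^2)$ (in which case $v'_j \ge s'_j - u \ge n/(4r)$ for every $j$, whence $(iv)$ saves at least $\delta_0 L \cdot n/(4r) \ge Cun$) or $u > n/(4r^2)$ (where $un = \Omega(n^2)$ and a slight refinement of $(iv)$ admitting also vertices of $U_{\ell'}$ with $\ell' > \ell$ as witnesses of non-shattering still yields $\Omega(un)$ saving), gives
\begin{equation*}
\log_2 |\U(\P_n,\alpha,k)| \; \le \; (1-1/r)\tfrac{n^2}{2} + \bigl[H(3\alpha r)/r - C\bigr] u n + O(n^{2-\eps_0}).
\end{equation*}
By the choice of $\alpha$ the bracketed term is at most $-C/2$, and using $u \ge n^{1-\alpha}$ together with $\alpha^2 \le C/2$ produces the claimed bound $2^{(1-1/r)n^2/2 - \alpha^2 n^{2-\alpha}}$; the $o(|\P_n|)$ assertion then follows from Observation~\ref{ABT}.

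The principal obstacle is the delicate balance between $(ii)$ and $(iv)$: the per-pair algorithm saving $\delta_0$ is exponentially small in $|U(r,k)|$, whereas the per-vertex clone cost $H(3\alpha r)$ is $O(\alpha\log(1/\alpha))$; forcing the latter below the former dictates how small $\alpha$ must be in terms of $k$, which is why $\alpha$ depends on $k$ in the statement. A subsidiary difficulty, handled by the regime split above, is maintaining $\Omega(un)$ savings when $u$ is of linear order and the sets $V'_j$ can be highly uneven, where the proof must invoke the full strength of Observation~\ref{algprop} rather than only its restriction to $V'$.
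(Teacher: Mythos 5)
Your proposal is correct and follows essentially the same route as the paper: reduce to graphs outside the exceptional sets, apply Theorem~\ref{count} to the $U(k)$-free remainders, charge the clone property for the within-part edges at vertices of the copies, and extract an $\Omega(un)$ saving on the remaining edges from Observation~\ref{algprop} via the mechanism of Lemma~\ref{noUk}, with the same balance of constants forcing $\alpha$ to be exponentially small in $|U(r,k)|$. The only under-detailed step is the regime $u = \Omega(n)$ --- where the paper itself is equally terse --- and your claimed $\Omega(un)$ saving there does hold, but to verify it one should restrict attention to the first $O(n/|U(r,k)|)$ copies produced by the algorithm, for which the non-shattering witness sets $S'_j \setminus \bigcup_{\ell' \le \ell} U_{\ell'}$ still have size $\Omega(n/r)$.
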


\begin{proof}
Let $k$ be sufficiently large so that $G$ is $U(r+1,k)$-free for every $G \in \P$. Such a $k = k(\P)$ exists by Lemma~\ref{Urkfree}.

We simply count the graphs in
$$\U_n \; := \; \U(\P_n,\alpha,k) \setminus \Big( \B(\P_n,\alpha,n^{1-2\alpha}) \cup \D(\P_n,\alpha) \Big).$$
By Lemmas~\ref{countA} and~\ref{countB} we have
$$|\B(\P_n,\alpha,c)| \; \le \; 2^{(1 - 1/r) n^2/2 \, - \, \alpha' n^2}$$
for some $\alpha' = \alpha'(\P,\alpha)$, if $c = c(\P,\alpha)$ is sufficiently large. (Recall that $|\B(\P_n,\alpha,c)|$ is monotone decreasing in $c$.) Also, by Lemmas~\ref{countC} and~\ref{countD} we have
$$|\D(\P_n,\alpha)| \; \le \; 2^{(1 - 1/r) n^2/2 \, - \, \alpha'' n^2},$$
where $\alpha'' = \alpha/3r^3$. Thus it suffices to prove the claimed bound for the set $\U_n$.

So let $G \in \U_n$, and note that $G$ has
\begin{itemize}
\item[$(a)$] a BBS-partition $P$ for $(\eps,\delta,\gamma)$,
\item[$(b)$] a maximal set $B \subset V(G)$, which is $(2\alpha)$-bad for $(G,P)$, with $|B| \le n^{1-2\alpha}$,
\item[$(c)$] an $\alpha$-adjustment $P' = (S'_1,\ldots,S'_r)$ of $(G,P)$ with respect to $B$, such that
$$|U(G,P',k)| \; \ge \; n^{1-\alpha}.$$
\end{itemize}
Let $U_1,\ldots,U_L$ denote the sets given by the algorithm, applied (for $k$) to the partition $P'$ of $G$. By definition,
$$U(G,P',k) \; = \; \bigcup_j U_j.$$

We have at most $n^n$ choices for the partition $P'$, and the sets $B$ and $U_1,\ldots,U_L$. Now, given an edge $e = ab$, define the index $i(e)$ as follows:
\begin{itemize}
\item[$(a)$] If $e$ has an endpoint in $B$ then $i(e) = 0$.
\item[$(b)$] If $e$ has an endpoint in $U_\ell$ and the other endpoint is in
$$V(G) \setminus \Big( B \cup \bigcup_{j=1}^{\ell-1} U_j \Big)$$
then $i(e) = \ell$.
\item[$(c)$] If $e$ has both endpoints in $V(G) \setminus \Big( B \cup U(G,P',k) \Big)$ then $i(e) = \infty$.
\end{itemize}
We choose the edges of $G$ in increasing order of index.

First, since $|B| \le n^{1-2\alpha}$, we have at most $2^{|B|n} \le 2^{n^{2-2\alpha}}$ choices for the edges incident with $B$. So let $1 \le \ell \le L$, and suppose that $G[U_\ell]$ is a copy of $U(t,k)$, where $2 \le t \le r$. (Note that $t \neq r + 1$, since $G$ is $U(r+1,k)$-free by our choice of $k$.)

\medskip
\noindent \ul{Claim}: There is a constant $\lambda > 0$, depending only on $k$ and $r$, such that we have at most
$$2^{\left( 1 - 1/r - \lambda \right) |U_\ell| n}$$ choices for the edges with index $\ell$.

\begin{proof}
Without loss of generality, let $U_\ell = A_1 \cup \ldots \cup A_t$, where $A_1,A_2 \subset S'_2$, $A_j \subset S'_j$ for each $3 \le j \le t$, and
$$A_{j+1} \to A_1 \cup \ldots \cup A_j$$
for each $1 \le j \le t - 1$. Recall that, by the definition of an $\alpha$-adjustment, each vertex $u \in S'_j$ is a $(3\alpha)$-clone of $b$ with respect to $S'_j$, for some $b \in B$. Note also that $(1/r - 2\alpha)n \le |S'_j| \le (1/r + 2\alpha)n$ for each $j \in [r]$.

Thus, for each $u \in U_\ell$, we have at most
$$2^{\left( 1 - 2/r + 2\sqrt{\alpha} \right) n}$$
choices for the edges between $u$ and $V(G) \setminus S'_1$, by Observation~\ref{choices}, since $\alpha > 0$ is sufficiently small. But $S'_1$ does not shatter $U_\ell$, by Observation~\ref{algprop}, and so, by Lemma~\ref{noUk}, we have at most
$$2^{|U_\ell|n/r - \lambda_1 n}$$
choices for the edges between $U_\ell$ and $S'_1$, where $\lambda_1 = \lambda_1(k,r) > 0$ is the constant in Lemma~\ref{noUk}. Choosing $\alpha = \alpha(\P,k)$ sufficiently small, the result follows.
\end{proof}

Now, let $T_j = S'_j \setminus U(G,P',k)$ for each $j \in [r]$ , and note that, since the algorithm stopped, $T_j$ is $U(k)$-free. Thus, by Theorem~\ref{count}, we have at most $2^{n^{2-2\alpha}}$ choices for the edges inside these sets. Also, trivially, we have at most
$$2^{(1 - 1/r)(n - U(G,P',k))^2/2}$$
choices for the edges between the sets $T_j$.

Multiplying the number of choices, we get
\begin{eqnarray*}
\log_2 ( |\U_n| ) & \le & \left( 1 - \frac{1}{r} \right) \frac{(n - |U(G,P',k)|)^2}{2} \,+\, \left( 1 - \frac{1}{r} - \lambda \right) |U(G,P',k)|n \,+\, O\Big( n^{2-2\alpha} \Big)\\
& \le & \left( 1 - \frac{1}{r} \right) \frac{n^2}{2} \, - \, \lambda |U'(G,P',k)| n \, + \, \frac{|U(G,P',k)|^2}{2} \,+\, O\Big( n^{2-2\alpha} \Big).
\end{eqnarray*}
Since $|U(G,P',k)| \ge n^{1-\alpha}$, the result follows if $|U(G,P',k)| \le \lambda n$.

Finally, suppose that $|U(G,P',k)| \ge \lambda n$. Then, by the pigeonhole principle and without loss of generality, there exists $2 \le t \le r$ and a subset $X \subset [L]$ such that, for each $j \in X$, $U_j = A_1 \cup \ldots \cup A_t$, where $A_1,A_2 \subset S'_2$, $A_j \subset S'_j$ for each $3 \le j \le t$, and
$$A_{j+1} \to A_1 \cup \ldots \cup A_j$$
for each $1 \le j \le t - 1$, and $|X| \ge \lambda_2 n$, where $\lambda_2$ depends only on $k$ and $r$.

We have at most $n^n$ choices for the partitions $(S_1,\ldots,S_r)$ and $(S_1',\ldots,S'_r)$, and at most $2^{(\eps + \sqrt{\delta} + \gamma)n^2}$ choices for the edges inside the sets $S_j$. We have at most $2^{\alpha n^2}$ choices for the edges incident with vertices in $\bigcup_j S_j \triangle S'_j$, and at most $2^{{{r-1} \choose 2}n^2/r^2}$ choices for the edges between $S'_i$ and $S'_j$ for $i,j \neq 1$. Finally, we have at most $2^{n/r(n - n/r) - \lambda_3 n^2}$ choices for the edges incident with $S_1'$, by Lemma~\ref{noUk}.

Thus, choosing $\alpha$ sufficiently small, we obtain
$$\log_2 ( |\U_n| ) \; \le \; \left( \frac{(r-1)(r-2)}{2r^2} + \frac{r-1}{r^2} \right) n^2 - \lambda_4 n^2 \; = \; \left( 1 - \frac{1}{r} - \lambda_4 \right) n^2,$$
for some $\lambda_4 > 0$, as required.
\end{proof}

\begin{rmk}
Note that we in fact only needed $B(G,P,\alpha) \le n^{1-2\alpha}$ for almost every graph $G \in \P$.
\end{rmk}

The proof of Theorem~\ref{structure} now follows easily.

\begin{proof}[Proof of Theorem~\ref{structure}]
Let $\P$ be a hereditary property of graphs with $\chi_c(\P) = r$, and let $k = k(\P) \in \N$ be sufficiently large, and $\alpha = \alpha(\P,k)$, $\gamma = \gamma(\P,\alpha,k)$, $\delta  = \delta(\P,\gamma,\alpha,k) > 0$ and $\eps = \eps(\P,\gamma,\delta,\alpha,k) > 0$ be sufficiently small. By Lemma~\ref{S1S2}, almost every graph $G \in \P$ has a BBS-partition for $(\eps,\delta,\gamma)$. So let $G \in \P$, let $P$ be a BBS-partition of $G$ for $(\eps,\delta,\gamma)$, and let $B$ be a maximal $(2\alpha)$-bad set for $(G,P)$.

Now, by Lemmas~\ref{countC} and~\ref{countD}, for almost every such $G$ there exists an $\alpha$-adjustment $P' = (S'_1,\ldots,S'_r)$ of $(G,P)$ with respect to $B$. Let $U(G,P',k)$ denote the set given by the algorithm. By Lemma~\ref{countU}, $U(G,P',k) \le n^{1-\alpha}$ for almost every such $G$.

Let $A = B \cup U(G,P',k)$, and let $S_j := S'_j \setminus A$ for each $j \in [r]$. Then $S_j$ is $U(k)$-free for each $j \in [r]$, and Theorem~\ref{structure} follows.
\end{proof}

\section{A sharper bound for bipartite graphs}\label{bipsec}

In this section we refine the methods of  the proof of Theorem~\ref{count}, giving a close to sharp upper bound on
$$f(n,n,U(k)) \; := \; |\{G \textup{ bipartite on } A \cup B \,:\, |A| = |B| = n \textup{ and $G$ is $U(k)$-free}\}|.$$

The exponent in our bound will be within a polylog-factor of the best that we could hope for, i.e., the exponent in the extremal result of Alon, Krivelevich and Sudakov~\cite{AKS} (see the more general Theorem~6.1 in their paper), that a $U(k)$-free graph with class sizes $n$ cannot have more than $O(n^{2-1/(k-1)})$ edges.

We remark that to remove all of the seemingly unnecessary $\log$-factors looks extremely hard. Corresponding results are known only for the monotone case, and even then only in special cases (for $C_4$ by Kleitman and Winston~\cite{KWn}, for $C_6$ and $C_8$ by Kleitman and Wilson~\cite{KWl}, for $\{C_4,C_6,\ldots C_{2k}\}$ by Kohayakawa, Kreuger and Steger~\cite{KKS}, and for $K_{s,t}$ by Balogh and Samotij~\cite{BS1,BS2}).

\begin{thm}\label{bipsharp}
For every $3 \le k \in \N$, and every sufficiently large $n$, we have
$$f(n,n,U(k)) \; \le \; \exp\Big( n^{2-1/(k-1)} (\log n)^{k+1} \Big).$$
\end{thm}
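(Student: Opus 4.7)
The plan is to refine the blocking argument from the proof of Theorem~\ref{count} by choosing the block size optimally rather than as $n^\eps$ for some small $\eps$. In that earlier proof we split $B$ into blocks of size $n^\eps$, and the dominant loss came from having to encode the ``menu'' of traces in each block; this forced $\eps(k+1)\le 1$ and yielded only an exponent of $2 - 1/(k+1)$. The key idea now is to take blocks of size $s\asymp n^{1/(k-1)}$, at which point the menu-encoding cost and the per-vertex trace-assignment cost balance, and both become $n^{2-1/(k-1)}$ times a polylog in $n$.

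Concretely, given a $U(k)$-free bipartite graph $G$ on $A\cup B$ with $|A|=|B|=n$, I would partition $B=B_1\cup\cdots\cup B_t$ with $|B_j|=s$ and $t=n/s$, and then, for each block $B_j$, consider the family $\mathcal{F}_j=\{\Gamma(a)\cap B_j : a\in A\}$. Since $G$ is $U(k)$-free, $\mathcal{F}_j$ has VC-dimension at most $k-1$, so by Sauer's lemma $|\mathcal{F}_j|\le O(s^{k-1})$. One then encodes $G$ by writing down, for each $j$, the menu $\mathcal{F}_j$, and, for each $a\in A$ and each $j$, the index in $\mathcal{F}_j$ of $\Gamma(a)\cap B_j$, using $O(\log s)$ bits. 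The trace-assignment cost is therefore $O(nt\log s)=O((n^2/s)\log s)$ bits, while the menu cost over all blocks is $t\cdot M(s,k-1)$, where $M(s,k-1)$ denotes the encoding length of a single VC-dimension-$\le (k-1)$ family on $[s]$. Setting $s=n^{1/(k-1)}$ makes both contributions equal to $n^{2-1/(k-1)}$ up to polylogarithmic factors, giving the claimed bound.

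The main obstacle is bounding $M(s,k-1)$: the trivial bound coming from $\binom{2^s}{s^{k-1}}\le 2^{O(s^k)}$ yields only an exponent of $2-1/k$ in the final count, which is not enough. To achieve $M(s,k-1)=O(s^{k-1}\log^k s)$, I would iterate Sauer's lemma in the spirit of the Alon--Krivelevich--Sudakov extremal argument: at each stage either identify a $(k-1)$-set that is shattered by $\mathcal{F}_j$ (which further constrains the remaining traces), or record a specific subset of $[s]$ whose trace is \emph{missing} from $\mathcal{F}_j$, and in either case reduce to a strictly smaller instance. Tracking the resulting decision tree encodes $\mathcal{F}_j$ in $O(s^{k-1}\log^k s)$ bits per block, and combining this with the trace-assignment cost (together with an additional $\log n$ factor absorbed in choosing the block partition) yields the exponent $n^{2-1/(k-1)}(\log n)^{k+1}$ stated in the theorem.
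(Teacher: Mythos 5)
The reduction to blocks of size $s=n^{1/(k-1)}$ and the balancing of the trace-assignment cost against the menu cost are fine as bookkeeping, but there is a genuine gap exactly where you locate the ``main obstacle'': you never establish that a single family $\mathcal{F}_j\subset 2^{[s]}$ of VC dimension at most $k-1$ can be encoded in $O(s^{k-1}\log^k s)$ bits, and the sketch offered for this does not work as stated. Learning that some $(k-1)$-set is shattered by $\mathcal{F}_j$ constrains the family only very weakly (for each $z$ outside that set it forbids one pattern on a $k$-set, i.e.\ $O(ks)$ bits of information in total), while the family may contain $\binom{s}{\le k-1}\approx s^{k-1}$ sets, each a priori costing $s$ bits; and ``recording a subset of $[s]$ whose trace is missing'' itself costs $s$ bits, with no indication of how the resulting decision tree terminates within the claimed budget. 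Nor can this step be delegated to the Alon--Krivelevich--Sudakov argument, which is an extremal edge-counting argument, not an enumeration. Note that the bound you need is essentially tight: there are at least $2^{\binom{s}{\le k-1}}=\exp\big(\Omega_k(s^{k-1})\big)$ families of VC dimension at most $k-1$ on $[s]$ (take arbitrary subfamilies of the family of all sets of size at most $k-1$), so bounding $M(s,k-1)$ is a counting problem of exactly the same nature and difficulty as the theorem itself, merely transplanted to an unbalanced $s^{k-1}\times s$ instance; the blocking step has not reduced the core difficulty.

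The claim $M(s,k-1)=O(s^{k-1}\,\mathrm{polylog}\,s)$ is in fact true, but the proof I see runs through precisely the mechanism your proposal omits and which the paper uses directly: a hierarchy of nested Hamming-distance nets $U_0\subset\cdots\subset U_t$ on the side being enumerated, at scales $x_i$ decreasing from about $s^{1/(k-1)}$ down to polylogarithmic over $t=O(\log\log s)$ levels, where the size of an $x$-separated set is bounded by $(s/x)^{k-1}\cdot\mathrm{polylog}$ via random sparsening followed by Sauer's Lemma (this is Lemma~\ref{card}), and each set is then encoded by its nearest net point together with a small symmetric difference. If you supply that ingredient you can complete your block argument, but at that point the blocks are redundant: applying the same net hierarchy directly to the $n$ neighbourhoods, as the paper does, yields the theorem in one pass.
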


We first prove the following lemma, the proof of which uses the methods of Section~\ref{lemmasec}. Let $G$ be a $U(k)$-free bipartite graph with classes $U$ and $V$ where $|U| = |V| = n$. For each $u,v \in U$, we define $\Delta(u,v):=|\Gamma(u) \triangle \Gamma(v)|$, the `distance' between the two vertices.

\begin{lemma}\label{card}
Let $m,n,x,k \in \N$, and let $G$ be a $U(k)$-free bipartite graph with classes $U$ and $V$, where $|U| = m$ and $|V| = n$. Let $U' \subset U$, and suppose that, for any $u,v \in U'$, we have $\Delta(u,v) \ge x$. Then
$$|U'| \; \le \; \left(\frac{n}{x}\right)^{k-1} 3^k \big( \log m \big)^{k-1}.$$
\end{lemma}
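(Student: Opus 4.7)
The plan is to adapt the sparsening argument from the proof of Lemma~\ref{key}: reduce the graph to a small bipartite graph (still $U(k)$-free) in which vertices of $U'$ retain distinct neighbourhoods, and then apply Sauer's Lemma.

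More concretely, first I would set $p := 3\log m / x$ and pick a random subset $X \subset V$ of size $s := \lceil pn\rceil$ uniformly at random. For any fixed pair $u,v \in U'$, since $|\Gamma(u)\triangle\Gamma(v)| \ge x$, the probability that $X$ avoids the symmetric difference is at most
\[
\binom{n-x}{s}\Big/\binom{n}{s} \;\le\; (1 - x/n)^s \;\le\; e^{-px}.
\]
A union bound over the at most $\binom{m}{2} \le m^2/2$ pairs in $U'$ gives expected number of collisions at most $(m^2/2) e^{-px} < 1$ by our choice of $p$. Hence there exists a set $X \subset V$ with $|X| \le 3n(\log m)/x$ on which the vertices of $U'$ have pairwise distinct neighbourhoods, so
\[
|U'| \;\le\; \big|\{\Gamma(u) \cap X : u \in U'\}\big|.
\]

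Second, since $G$ is $U(k)$-free, no $k$-subset of $X$ can be shattered by the family $\{\Gamma(u) \cap X : u \in U\}$ (otherwise we would find disjoint $A' \subset U$, $B \subset X$ with $G[A',B] = U(k)$). Applying Sauer's Lemma,
\[
\big|\{\Gamma(u) \cap X : u \in U\}\big| \;\le\; \sum_{i=0}^{k-1}\binom{|X|}{i} \;\le\; 2|X|^{k-1},
\]
the last inequality holding provided $|X| \ge 2$ (otherwise the conclusion of the lemma is trivial). Combining these,
\[
|U'| \;\le\; 2|X|^{k-1} \;\le\; 2 \left(\frac{3n\log m}{x}\right)^{k-1} \;=\; 2\cdot 3^{k-1}\left(\frac{n}{x}\right)^{k-1}(\log m)^{k-1},
\]
which is at most the claimed bound since $2 \cdot 3^{k-1} \le 3^k$.

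No step here is a serious obstacle: the first moment computation is routine, and Sauer's Lemma is used exactly as in Lemma~\ref{key}. The only minor care needed is to check that $|X|$ is large enough for the Sauer bound to simplify neatly (and to handle the degenerate case $x > n$ or $m \le 1$, where the statement is vacuous). The argument is essentially the first half of the proof of Lemma~\ref{key}, stripped of the $2^t$-shattering step, since here we only need distinct neighbourhoods rather than a shattered set.
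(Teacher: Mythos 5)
Your proposal is correct and follows essentially the same route as the paper: a first-moment sparsening to find a small $X \subset V$ on which the vertices of $U'$ have distinct traces, followed by Sauer's Lemma applied to the ($U(k)$-free, hence non-shattering) trace family. The only cosmetic difference is that you set $p = 3\log m/x$ and union-bound over all $\binom{m}{2}$ pairs, whereas the paper uses $p = 3\log c/x$ with $c = |U'|$ and bounds $\log c \le \log m$ at the end; both give the stated bound.
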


\begin{proof}
The proof is very similar to that of Lemma~\ref{key}. Indeed, let $c = |U'|$, and with foresight, let $p = \ds\frac{3 \log c}{x}$. We claim that there exists a subset $X \subset V$, with $|X| = pn$, such that
$$\Gamma(u) \cap X \; \neq \; \Gamma(v) \cap X$$
for each $u,v \in U'$ with $u \neq v$. Indeed, if we choose a random subset $X \subset V$ of size $pn$, then
$$\Pr\Big( \Gamma(x) \cap X = \Gamma(y) \cap X \Big) \; \le \; {{n - x} \choose {pn}} \Big/ {n \choose {pn}} \; \le \; \left( 1 - \frac{x}{n} \right)^{p n}$$
for each such pair $\{x,y\}$, and so
$$\Ex \left| \left\{ \{x,y\} \in {{U'} \choose 2} \,:\, \Gamma(x) \cap X = \Gamma(y) \cap X \right\} \right| \; \le \; {c \choose 2} \left( 1 - \frac{x}{n} \right)^{pn} \; \le \; c^2 e^{-px} \; < \; 1.$$
Thus such a set $X$ must exist, as claimed. Now, if
$$c \; > \; 2 \left( \frac{3n \log c}{x} \right)^{k-1} \; = \; 2(pn)^{k-1} \; \ge \; \sum_{i=0}^{k-1} {{|X|} \choose i},$$
then, by Sauer's Lemma, there exist sets $U'' \subset U'$ and $X' \subset X$, with $|X'| = k$, such that $U'' \to X'$. But this is a contradiction, since $G$ is $U(k)$-free. Thus
$$|U'| \; = \; c \; \le \; 2 \left( \frac{3n \log c}{x} \right)^{k-1} \; \le \; 3^k \left( \frac{n}{x} \right)^{k-1} \big( \log m \big)^{k-1}$$
as required.
\end{proof}

We are now ready to prove Theorem~\ref{bipsharp}.

\begin{proof}[Proof of Theorem~\ref{bipsharp}]
Let $|U| = |V| = n$, and suppose that $G$ is a bipartite $U(k)$-free graph with classes $U$ and $V$. We are required to show that the number of choices for the edge set of $G$ is at most $\exp\Big( n^{2-1/(k-1)} (\log n)^{k+1} \Big)$.

The idea is to partition $U$ into $t + 2$ parts, and consider the edges from each part to $V$ in turn. Indeed, let $t \in \N$, let $n > x_0 > \dots > x_t > n^{1 - 1/(k-1)}$, and let
$$U_0 \,\subset \,\dots \,\subset\, U_t \,\subset \, U$$
be maximal subsets satisfying $\Delta(u,v) \ge x_i$ for each $u,v \in U_i$ and $0 \le i \le t$. Moreover, and with foresight, let $t = 10\log\log n$, and let
$$x_i \; := \; n^{1-1/(k-1)+1/(k-1)^{i+2}}$$
for each $0 \le i \le t$. Note that we have at most $2^n$ choices for the sets $U_i$.

Now, by Lemma~\ref{card},
$$|U_i| \; \le \; \left(\frac{n}{x_i}\right)^{k-1} 3^k (\log n)^{k-1}$$
for each $0 \le i \le t$, so we have at most
$$\exp\left( n \left( \frac{n}{x_0} \right)^{k-1} 3^k (\log n)^{k-1} \right)$$
choices for the graph $G[U_0,V]$. Now let $0 \le i \le t - 1$, and assume that the sets $U_i$ and $U_{i+1}$ and the graph $G[U_i,V]$ have already been chosen.

\medskip
\noindent \ul{Claim}: We have at most
$$\exp\left( x_i \left( \frac{n}{x_{i+1}} \right)^{k-1} 3^{k+1} (\log n)^k \right)$$
choices for the edges between $U_{i+1} \setminus U_i$ and $V$.

\begin{proof}[Proof of claim]
Since $U_i$ is maximal, for every $v \in U_{i+1} \setminus U_i$ there is a $u \in U_i$ such that $\Delta(u,v) < x_i$. Thus, the number of choices for the edges between $v$ and $V$ is at most
$$2|U_i|{n \choose x_i},$$
and so the number of choices for the graph $G[U_{i+1} \setminus U_i,V]$ is at most
$$\left( 2|U_i| {n \choose {x_i}} \right)^{|U_{i+1}|} \; \le \; \big( n^{x_i + 1} \big)^{(n/{x_{i+1}})^{k-1} 3^k (\log n)^{k-1} } \; \le \; \exp\left( x_i \left( \frac{n}{x_{i+1}} \right)^{k-1} 3^{k+1} (\log n)^k \right),$$
as claimed.
\end{proof}

Finally, given $U_t$ and $G[U_t,V]$, the number of choices for the edge between $U \setminus U_t$ and $V$ is at most
$$\left[ 2 |U_t| {n \choose {x_t}} \right]^n \; \le \; \exp\Big((x_t + 1) n \log n \Big) \; \le \; \exp\Big( 2 n^{2-1/(k-1)} \log n \Big).$$
since for each $u$ there is a vertex $v\in U_t$ such that $\Delta(u,v)\le x_t$. There are at most $n$ choices for $v$ and $2{n \choose {x_t}}$ choices for the symmetric difference. %The second inequality follows because $x_t + 1 \le 2 n^{1 - 1/(k-1)}$.

Putting these bounds together, the number of choices for the graph $G[U,V]$ is at most
\begin{align*}
& \exp\left( n \left( \frac{n}{x_0} \right)^{k-1} 3^k (\log n)^{k-1} \,+ \, 2 n^{2-1/(k-1)} \log n  \, + \sum_{i=0}^{t-1} x_i \left( \frac{n}{x_{i+1}} \right)^{k-1} 3^{k+1} (\log n)^k \right).
\end{align*}
But, recalling that $x_i = n^{1-1/(k-1)+1/(k-1)^{i+2}}$, we have
$$n \left( \frac{n}{x_0} \right)^{k-1} \; = \; x_i \left( \frac{n}{x_{i+1}} \right)^{k-1} \; = \; n^{2 - 1/(k-1)}$$
for each $0 \le i \le t - 1$. Since $t = O(\log\log n)$, this gives an upper bound of
$$\exp\Big( C n^{2-1/(k-1)}(\log n)^k \log\log n \Big),$$
as required.
\end{proof}

We cannot hope to obtain very sharp results from such a (relatively) simple application of Sauer's Lemma. However, our results are close to optimal, if we do not care about poly-log factors in the exponent. For example, Theorem~\ref{bipsharp} for $k = 3$ gives an $n^{3/2}$ in the exponent, and $3/2$ is best possible, as $U(3)$ contains a $C_4$. In general, $U(k)$ contains a graph with average degree at least $2(k - 3)$, and so we have the bounds
$$\exp \Big( n^{2 - 1/(k-3)} \Big) \; \le \; f\big( n,n,U(k) \big) \; \le \; \exp\Big( n^{2 - 1/(k - 1)} (\log n)^{k + 1} \Big).$$

It would be interesting to eliminate (if possible) the $\log n$-factors from the exponent; this would yield Kleitman-Winston-type results for many different bipartite graphs.

\section{questions}\label{qsec}

The most obvious disadvantage of Theorem~\ref{structure} is that we know almost nothing about the structure of a typical $U(k)$-free graph.

\begin{qu}\label{sparse}
What is the structure of a typical $U(k)$-free graph? In particular, are almost all $U(k)$-free graphs either dense or sparse?
\end{qu}

As we remarked in the Introduction, there has recently been some important progress on hereditary properties of hypergraphs. In particular, we noted the following theorems of Dotson and Nagle~\cite{DN}, Ishigami~\cite{Ish} and Person and Schacht~\cite{PSch}.

Given a hereditary property of $k$-uniform hypergraphs $\P$, the extremal number of $\P$ is defined to be
\begin{align*}
& \textbf{ex}(n,\P) \; := \; \max\bigg\{ |\A| : \A \subset {[n] \choose k}, \textup{ and there exists } \M \subset {[n] \choose k} \setminus \HH \textup{ such that } \\
& \hspace{10cm} \M \cup \A' \in \P  \textup{ for every } \A' \subset \A \bigg\}.
\end{align*}
In other words, it is the maximum dimension of a subspace of $\P_n$, in the product space $\{0,1\}^{{[n] \choose k}}$.

\begin{thm}[Dotson and Nagle~\cite{DN}, Ishigami~\cite{Ish}]
Let $k \in \N$ and let $\P$ be a hereditary property of $k$-uniform hypergraphs. Then
$$|\P_n| \; = \; 2^{\mathbf{ex}(n,\P) + o(n^k)}.$$
\end{thm}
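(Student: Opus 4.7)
The lower bound $|\P_n| \ge 2^{\mathbf{ex}(n,\P)}$ is immediate from the definition: if $\A \subset {[n] \choose k}$ and $\M \subset {[n] \choose k} \setminus \A$ realize $\mathbf{ex}(n,\P)$, then the hypergraphs $\M \cup \A'$ with $\A' \subset \A$ are $2^{|\A|}$ distinct members of $\P_n$. The task is therefore to prove the matching upper bound $|\P_n| \le 2^{\mathbf{ex}(n,\P) + o(n^k)}$, and my plan is to adapt the proof of the Alekseev-Bollob\'as-Thomason Theorem, replacing Szemer\'edi's Regularity Lemma by the hypergraph regularity lemma of Gowers (or of Nagle-R\"odl-Schacht-Skokan), and the graph Embedding Lemma by an induced hypergraph counting lemma.

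Fix $\eps > 0$. For each $H \in \P_n$ I would apply hypergraph regularity to obtain a hierarchy of partitions of the $j$-subsets of $V(H)$ for $1 \le j \le k-1$, such that all but an $\eps$-fraction of the $k$-tuples of level-$1$ cells are regular with respect to the hierarchy, then round each density to the nearest multiple of $\eps$. Call the resulting data the \emph{skeleton} $\Sigma(H)$; the number of possible skeletons is at most $n^{M(\eps)}$ times a constant depending only on $\eps$. Fix such a $\Sigma$, and call a regular $k$-tuple of cells \emph{grey} if its rounded density lies in $[\eps,1-\eps]$ and \emph{saturated} otherwise. Let $\M = \M(\Sigma) \subset {[n] \choose k}$ consist of the $k$-subsets crossing a saturated tuple of density $\ge 1-\eps$, and let $\A = \A(\Sigma)$ consist of the $k$-subsets crossing a grey tuple. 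The heart of the argument would be to show that, after modifying $o(n^k)$ edges, $\M \cup \A' \in \P$ for every $\A' \subset \A$, so that $|\A| \le \mathbf{ex}(n,\P) + o(n^k)$. Given that, the number of hypergraphs with skeleton $\Sigma$ is at most $2^{\mathbf{ex}(n,\P) + o(n^k)}$, since edges across saturated tuples are determined up to $O(\eps n^k)$ slack and irregular tuples contribute $O(\eps n^k)$ further choices; summing over $\Sigma$ and letting $\eps \to 0$ would complete the proof.

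The main obstacle is establishing the displayed claim of the previous paragraph. The idea is that if $\M \cup \A'$ failed to be (close to) an element of $\P$ for some $\A' \subset \A$, it would contain a forbidden induced sub-hypergraph $F$; the induced hypergraph counting lemma, applied to the regular hierarchy together with the density pattern dictated by $\A'$, would then produce many induced copies of $F$ in $H$ itself, contradicting $H \in \P$ and the hereditary assumption. Setting up the induced counting lemma for the right density pattern (rather than just in the dense setting), and absorbing the ``modify $o(n^k)$ edges'' slack cleanly, are the genuinely technical steps; these are most transparent via the hypergraph removal lemma, or, as in Ishigami's approach, via a compactness/limit argument. The remainder of the proof is routine accounting parallel to the graph case.
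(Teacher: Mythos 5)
The paper does not prove this theorem: it is quoted in Section~\ref{qsec} as a result of Dotson--Nagle and Ishigami, cited from~\cite{DN} and~\cite{Ish}, so there is no in-paper proof to compare against. Your lower bound is correct and is exactly the observation that the definition of $\mathbf{ex}(n,\P)$ hands you a subcube of $\P_n$ of dimension $\mathbf{ex}(n,\P)$. Your upper-bound outline also identifies the right machinery (hypergraph regularity, an induced counting/embedding lemma, removal-type arguments), which is indeed the architecture of both cited proofs.

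However, as a proof the proposal has a genuine gap, and it sits precisely where you flag the ``main obstacle.'' The definition of $\mathbf{ex}(n,\P)$ requires a single fixed pair $(\M,\A)$ such that $\M \cup \A' \in \P$ \emph{exactly}, for \emph{every} $\A' \subset \A$; a statement of the form ``after modifying $o(n^k)$ edges, $\M\cup\A'$ is in $\P$'' does not witness anything for $\mathbf{ex}(n,\P)$, because the modification may depend on $\A'$ and an approximate member of a hereditary property need not be close to an exact one in any useful sense. What one actually has to do is extract from the regular hierarchy genuine subsets $\M^*, \A^*$ with $|\A^*| \ge |\A| - o(n^k)$ for which every completion lies in $\P$, by showing that any forbidden induced sub-hypergraph of any completion would embed as an induced sub-hypergraph of $H$ itself via an induced counting lemma valid at arbitrary (not just dense) rounded densities. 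This induced counting lemma in the hypergraph regularity framework --- where regularity of a $k$-tuple must be relative to the entire lower-level partition hierarchy, and where ``induced'' forces control of both edges and non-edges across every cell --- is the technical core of~\cite{DN} and~\cite{Ish} and occupies most of those papers; deferring it to ``the removal lemma or a compactness argument'' leaves the theorem unproved. The surrounding accounting (skeleton enumeration, the $2^{O(\eps\log(1/\eps))n^k}$ slack from saturated and irregular tuples) is fine and genuinely routine, but it is conditional on the displayed claim you have not established.
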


\begin{thm}[Person and Schacht~\cite{PSch}]
Almost every Fano-plane-free 3-uniform hypergraph is bipartite.
\end{thm}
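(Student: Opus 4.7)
The plan is to follow the classical Erd\H{o}s--Kleitman--Rothschild strategy, adapted to the hypergraph setting by means of hypergraph regularity. The starting point is the Tur\'an-type result of De Caen and F\"uredi: every Fano-plane-free $3$-graph on $n$ vertices has at most $\mathbf{ex}(n,\mathrm{Fano}) = e(B(n)) + o(n^3)$ edges, where $B(n)$ is the complete balanced bipartite $3$-graph (all triples not lying inside one part of a bipartition $V=V_1\cup V_2$). Crucially, since the Fano plane is not $2$-colourable, every bipartite $3$-graph is Fano-free, and there are exactly $2^{e(B(n))}$ of them, so $|\mathcal{P}_n|\ge 2^{e(B(n))}$. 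The matching upper bound (up to $o(n^3)$) is the content of the Dotson--Nagle/Ishigami theorem cited just above. The task is therefore to upgrade this to the structural statement that \emph{almost every} Fano-free $3$-graph is genuinely bipartite.

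The next ingredient is the stability version of the De Caen--F\"uredi theorem (due to Keevash--Sudakov and F\"uredi--Simonovits): any Fano-free $3$-graph with at least $(1-\delta)e(B(n))$ edges can be made bipartite by deleting $o(n^3)$ edges. I would combine this with the regularity method for $3$-graphs of R\"odl, Nagle, Schacht and Skokan: applied to a typical $G \in \mathcal{P}_n$, one obtains a regular partition and a \emph{cluster $3$-graph} $R$. An embedding lemma analogous to the graph Embedding Lemma of Section~\ref{toolsec} ensures that any sufficiently regular and dense Fano-shaped configuration in $R$ lifts to a genuine Fano plane in $G$; hence $R$ must itself be essentially Fano-free. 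Stability then forces $R$, and hence $V(G)$ itself, to admit a bipartition $(A,B)$ with at most $\eta n^3$ monochromatic edges for $\eta$ arbitrarily small.

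The decisive step is the counting: let $\mathcal{B}_n$ be the set of bipartite members of $\mathcal{P}_n$ and let $\mathcal{N}_n = \mathcal{P}_n\setminus\mathcal{B}_n$; one must prove $|\mathcal{N}_n|=o(|\mathcal{B}_n|)$. For each $G\in\mathcal{N}_n$ fix an approximate bipartition $(A,B)$ as above, and let $m$ be the number of monochromatic edges (so $1\le m\le \eta n^3$). The key \emph{supersaturation} input is that a single monochromatic edge $e=\{x,y,z\}\subset A$, together with the dense set of cross-edges present near $(A,B)$, participates in many potential Fano configurations; each such configuration is completed by some specific quadruple of bipartite edges, and at least one of these quadruples must be \emph{absent} from $G$ for $G$ to stay Fano-free. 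Careful bookkeeping shows that each additional monochromatic edge reduces the number of admissible bipartite edge-sets by a factor of at least $2^{c n^2}$ for some $c>0$. Summing over the at most $\binom{2 n^3}{m}$ choices of monochromatic edge-set and the $2^n$ bipartitions gives
\[
|\mathcal{N}_n| \;\le\; 2^n \sum_{m\ge 1} \binom{2n^3}{m}\, 2^{e(B(n)) - c m n^2} \;=\; o\bigl(2^{e(B(n))}\bigr) \;=\; o(|\mathcal{B}_n|),
\]
which is the desired conclusion.

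The hard part will be the supersaturation / multiplicative-cost estimate in the final paragraph: one must show that a single monochromatic edge forbids a \emph{constant fraction} of bipartite edge-patterns across $(A,B)$ (not merely one), so that the cost $2^{cn^2}$ per bad edge beats the $\binom{n^3}{m}$ entropy of choosing where to place the bad edges. This requires using the $(\varepsilon,\delta)$-regularity of the cross-bipartite $3$-graph, together with the rich structure of the Fano plane (every pair is in an edge), to guarantee many internally-disjoint Fano completions of each fixed monochromatic triple. Everything else --- the Tur\'an density, stability, and hypergraph regularity --- is off-the-shelf.
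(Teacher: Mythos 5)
A preliminary remark: the paper does not prove this statement at all --- it is quoted from Person and Schacht~\cite{PSch} in the closing section purely as motivation for a question about hypergraph properties. So there is no in-paper proof to compare against, and your sketch can only be measured against the known argument. Your outline does follow the Person--Schacht strategy in its broad shape (exact Tur\'an number for the Fano plane, stability, a regularity/embedding step, and a ``cost per monochromatic edge'' count), with one structural difference: because the Fano plane is a \emph{linear} hypergraph (any two lines meet in one point), the \emph{weak} hypergraph regularity lemma already supports the required embedding and counting, and that is what Person and Schacht use; invoking the full R\"odl--Nagle--Schacht--Skokan machinery is heavier than necessary, though not wrong in principle.

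The genuine gap is exactly where you locate the hard part, and the key claim as you state it is false without an extra hypothesis. Fix a monochromatic edge $e=\{1,2,3\}\subset A$ and let $L_1,L_2,L_3$ be the link graphs of its three vertices restricted to $B$. Completing $e$ to a Fano plane means finding a $4$-set $W\subset B$ and a $1$-factorization of $K_4$ on $W$ into perfect matchings $M_1,M_2,M_3$ with $M_i\subset L_i$ for each $i$; the completion uses six cross-edges, not a ``quadruple.'' Now let $B=B'\cup B''$ be balanced and take $L_1=L_2=L_3$ all equal to the complete bipartite graph between $B'$ and $B''$: each link has density about $1/2$, yet for any $4$-set $W$ the number of cross-pairs is at most $4<6$, so every $1$-factorization of $K_4[W]$ has a matching containing a non-cross pair, and there is \emph{no} Fano completion whatsoever. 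Hence ``a single monochromatic edge forbids a constant fraction of bipartite edge-patterns'' cannot be proved pointwise from density alone; one must first prove that all but a negligible fraction of Fano-free hypergraphs have quasirandom cross-structure (quasirandom links), and only then run the $2^{cn^2}$-per-bad-edge count restricted to that majority. Your displayed sum, which ranges over all placements of the $m$ bad edges with a uniform multiplicative cost, silently assumes this quasirandomness. Establishing it, and the resulting supersaturation of Fano completions, is precisely the technical content of~\cite{PSch}; as written, your proposal is a correct plan whose central lemma is asserted rather than proved.
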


These results give reason to be optimistic that the following question, which until recently would have seemed very far out of reach, may now be approachable.

\begin{qu}
What is the structure of a typical member of a hereditary property of $k$-uniform hypergraphs?
\end{qu}

Finally, we note that Theorem~\ref{structure} is considerably weaker than Conjecture~\ref{herconj}, since the set $A$ can be very large, and because our bounds on $k$ are likely far from best possible. The following problem asks for some progress towards the conjecture.

\begin{prob}
In the statement of Theorem~\ref{structure}, improve the upper bound on $|A|$, and give good bounds on the constant $k(\P)$.
\end{prob}


\begin{thebibliography}{99}

\bibitem{Alek} V.E.~Alekseev, On the entropy values of hereditary classes of graphs, {\em Discrete Math.\ Appl.} {\bf 3} (1993), 191--199.

%\bibitem{AlekS} V.E.~Alekseev and S.V.~Sorochan, On the entropy of hereditary classes of oriented graphs (Russian) International Conference DAOR'2000, \emph{Diskretn. Anal. Issled. Oper. Ser. 1}, \textbf{7} (2000), 20--28.

\bibitem{AFN} N.~Alon, E.~Fischer and I.~Newman, Testing of bipartite graph properties, \emph{SIAM J. Comput.}, \textbf{37} (2007), 959--976.

\bibitem{AKS} N.~Alon, M.~Krivelevich, and B.~Sudakov, Tur\'an numbers of bipartite graphs and related Ramsey-type questions,
\emph{Comb. Probab. Computing}, \textbf{12} (2003), 477--494.

\bibitem{BBM1} J.~Balogh, B.~Bollob\'as and R.~Morris, Hereditary properties of ordered graphs, \emph{Topics in Discrete Mathematics} (special edition for J.~Ne\v{s}et\v{r}il, eds. M.~Klazar, J.~Kratochv\'il, M.~Loebl, J.~Matou\v{s}ek, R.~Thomas and P.~Valtr), Springer, \textbf{26} (2006), 179--213.

\bibitem{BBM2} J.~Balogh, B.~Bollob\'as and R.~Morris, Hereditary properties of tournaments, \emph{Electron. J. Combin.}, \textbf{14} (2007), Research Paper 60, 25 pp. (electronic).

\bibitem{BBSS} J.~Balogh, B.~Bollobas, M.~Saks and V.T.~Sos, On the diversity function of a hereditary graph property, \emph{J. Combin. Theory, Ser. B}, \textbf{99} (2009), 9--19.

\bibitem{BBS1} J.~Balogh, B.~Bollob\'as and M.~Simonovits, On the number of graphs without forbidden subgraph, {\em J. Combin. Theory Ser. B}, \textbf{91} (2004), 1--24.

\bibitem{BBS2} J.~Balogh, B.~Bollob\'as and M.~Simonovits, The typical structure of graphs without given excluded subgraphs, to appear in \emph{Random Structures and Algorithms}.

\bibitem{BBS3} J.~Balogh, B.~Bollob\'as and M.~Simonovits, The fine structure of octahedron-free graphs, submitted.

\bibitem{BBW1} J.~Balogh, B.~Bollob\'as, and D.~Weinreich, The speed of hereditary properties of graphs, {\em J. Combin. Theory Ser. B}, {\bf 79} (2000), 131--156.

\bibitem{BBW2} J.~Balogh, B.~Bollob\'as and D.~Weinreich, The penultimate range of growth for graph properties, {\em Europ. J. Comb.}, {\bf 22} (2001), 277--289.

\bibitem{BBW3} J.~Balogh, B.~Bollob\'as and D.~Weinreich, A jump to the Bell number for hereditary graph properties, \emph{J. Combin. Theory Ser. B}, (2005)

\bibitem{BS1} J.~Balogh and W.~Samotij, The number of $\K_{m,m}$-free graphs, submitted.

\bibitem{BS2} J.~Balogh and W.~Samotij, The number of $\K_{s,t}$-free graphs, in preparation.

\bibitem{ICM} B.~Bollob\'as, Hereditary properties of graphs: asymptotic enumeration, global structure, and colouring, in  {\em Proceedings of the International Congress of Mathematicians}, Vol. III (Berlin, 1998), {\em Doc. Math.} 1998, Extra Vol. III, 333--342 (electronic).

\bibitem{BT1}  B.~Bollob\'as and A.~Thomason,  Projections of bodies and hereditary  properties of hypergraphs, {\em Bull. London Math. Soc.}, {\bf 27} (1995) 417--424.

\bibitem{BT2}  B.~Bollob\'as and A.~Thomason, Hereditary and monotone properties of graphs, ``The mathematics of Paul Erd\H{o}s, II" (R.L. Graham and J. Ne\v{s}et\v{r}il, Editors), {\em Alg. and Combin.}, Vol. 14, Springer-Verlag, New York/Berlin (1997), 70--78.

\bibitem{DN} R.~Dotson and B.~Nagle, Hereditary properties of hypergraphs, \emph{J. Combin. Theory Ser. B}, \textbf{99} (2009), 460--473.

%\bibitem{E} P.~Erd\H{o}s, On extremal problems of graphs and generalized graphs, \emph{Israel J. Math.}, \textbf{2} (1964), 183--190.

\bibitem{ES1} P. Erd\H{o}s, On some new inequalities concerning extremal properties or graphs, in P. Erd\H{o}s, G. Katona (Eds.), Theory of Graphs, Academic Press, New York, 1968, pp. 77--81.

\bibitem{EFR} P.~Erd\H{o}s, P.~Frankl and V.~R\"odl, The asymptotic number of graphs not containing a fixed subgraph and a problem for hypergraphs having no exponent, {\em Graphs and Combin.}, {\bf 2} (1986), 113--121.

\bibitem{EKR} P.~Erd\H{o}s, D.J.~Kleitman and B.L.~Rothschild, Asymptotic enumeration of $K_{n}$-free graphs, in {\em  Colloquio Internazionale sulle Teorie Combinatorie} (Rome, 1973), Vol. II, pp. 19--27. {\em Atti dei Convegni Lincei}, {\bf 17}, Accad. Naz. Lincei, Rome, 1976.

\bibitem{Ish} Y.~Ishigami, The number of hypergraphs and colored Hypergraphs with hereditary properties, arXiv:0712.0425.

\bibitem{Klaz} M.~Klazar, On growth rates of closed sets of permutations, set partitions, ordered graphs and other objects, submitted.

\bibitem{KWl} D.J. Kleitman and D.~Wilson, On the number of graphs which lack small cycles, manuscript (1996).

\bibitem{KWn} D.J.~Kleitman and K.J.~Winston, On the number of graphs without 4-cycles, \emph{Discrete Math.} \textbf{41} (1982). 167-172

\bibitem{KKS} Y.~Kohayakawa, B.~Kreuter and A.~Steger, An extremal problem for random graphs and the number of graphs with large even-girth, \emph{Combinatorica}, \textbf{18} (1998), 101--120.

\bibitem{KNR} Y.~Kohayakawa, B.~Nagle and V.~R\"odl, Hereditary properties of triple systems, \emph{Combin. Probab. Comput.}, \textbf{12} (2003), 155--189.

\bibitem{KPR} Ph.G.~Kolaitis, H.J.~Pr\"omel and B.L.~Rothschild, $K_{l+1}$-free graphs: asymptotic structure and a $0$-$1$ law, {\em Trans. Amer. Math. Soc.},  {\bf 303} (1987), 637--671.

\bibitem{KS} J.~Koml\'os and M.~Simonovits, Szemerédi's regularity lemma and its applications in graph theory, Bolyai Society Mathematical Studies 2, Combinatorics, Paul Erd\H{o}s is Eighty (Volume 2) (D. Mikl\'os, V. T. S\'os, T. Sz\'onyi eds.), Keszthely (Hungary) (1993), Budapest (1996), 295--352.

\bibitem{MT} A.~Marcus and G.~Tardos, Excluded permutation matrices and the Stanley-Wilf conjecture, \emph{J. Combin. Theory Ser. A}, \textbf{107} (2004), 153--160.

\bibitem{NR} B.~Nagle, and V.~R\"odl, The asymptotic number of triple systems not containing a fixed one, Combinatorics (Prague, 1998), \emph{Discrete Math.}, \textbf{235} (2001), 271--290.

\bibitem{PSch} Y.~Person and M.~Schacht, Almost all hypergraphs without Fano planes are bipartite. In: Claire Mathieu (editor): Proceedings of the Twentieth Annual ACM-SIAM Symposium on Discrete Algorithms (SODA 09), 217--226. ACM Press.

\bibitem{PS1} H.J.~Pr\"omel and A.~Steger, Excluding induced subgraphs: quadrilaterals, {\em Random Structures and Algorithms}, {\bf 2} (1991), 55--71.

%\bibitem{PS2} Pr\"omel, H. J., and Steger, A., Excluding induced subgraphs II., Extremal graphs, {\em Discrete Appl. Math.} {\bf 44}  (1993), 283--294.

%\bibitem{PS3} Pr\"omel, H. J., and Steger, A., Excluding induced subgraphs III., A general asymptotic, {\em Random Structures Algorithms} {\bf 3} (1992), 19--31.

%\bibitem{PS4} Pr\"omel, H. J., and Steger, A., On the asymptotic structure of sparse triangle free graphs, {\em J. Graph Theory} {\bf 21} (1996), 137--151.

%\bibitem{PS5} H.J.~Pr\"omel and A.~Steger, Counting $H$-free graphs, {\em Discrete Math.}, {\bf 154} (1996), 311--315.

\bibitem{PS6} H.J.~Pr\"omel and A.~Steger, Almost all Berge graphs are perfect, {\em Comb. Prob. Comp.}, {\bf 1} (199?), ??.

\bibitem{Sauer} N.~Sauer, On the density of families of sets, \emph{J. Combinatorial Theory, Ser. A}, \textbf{13} (1972), 145--147.

\bibitem{SZ}  E.R.~Scheinerman and J.~Zito, On the size of hereditary classes of graphs, {\em J. Combin. Theory Ser. B}, {\bf 61} (1994), 16--39.

\bibitem{ES2} M.~Simonovits, A method for solving extremal problems in graph theory, stability problems, \emph{Theory of Graphs (Proc. Colloq., Tihany, 1966)}, Academic Press, New York, 279--319.

\bibitem{Sz} E.~Szemer\'edi, Regular partitions of graphs, \emph{Probl\`{e}mes combinatoires et th\'eorie des graphes} (Colloq. Internat. CNRS, Univ. Orsay, Orsay, 1976), pp. 399--401, Colloq. Internat. CNRS, 260, CNRS, Paris, 1978.


\end{thebibliography}
\end{document}